\documentclass[12pt,a4paper,reqno]{amsart}
\usepackage{geometry}
\usepackage[numbers]{natbib}
\usepackage{amsthm,amsmath,amssymb}
\newtheorem{theorem}{Theorem}
\newtheorem{corollary}[theorem]{Corollary}
\newtheorem{conjecture}[theorem]{Conjecture}
\newtheorem{proposition}[theorem]{Proposition}
\newtheorem{lemma}[theorem]{Lemma}

\theoremstyle{remark}
\newtheorem*{remark}{Remark}

\numberwithin{equation}{section}

\begin{document}

\title[Negative discrete Dirichlet $L$-function moments]{Negative discrete second moments of Dirichlet $L$-functions}

\author[A. Pearce-Crump]{Andrew Pearce-Crump}
\address{School of Mathematics, University of Bristol, Bristol, BS8 1UG, United Kingdom}
\email{andrew.pearce-crump@bristol.ac.uk}

\begin{abstract}
Let $\chi$ be a primitive Dirichlet character modulo $q>1$. Assuming the Generalised Riemann Hypothesis for $L(s,\chi)$ and that the non-trivial zeros $\rho=\tfrac12+i\gamma$ of $L(s,\chi)$ are simple, we prove lower bounds for the discrete moments $\sum_{0<\gamma\le T}|L'(\rho,\chi)|^{-2}$ and $\sum_{0<\gamma\le T}|L(2\rho,\chi^2)/L'(\rho,\chi)|^2$, uniformly in the conductor. The bounds capture the proportion $\beta/(1+\beta)$ of the conjectured asymptotics, where $\beta=\log T/\log qT$: this is one half whenever $\log q=o(\log T)$, recovering for fixed $q$ the Dirichlet analogues of theorems of Milinovich and Ng and of Sinha, and degrades to $1/(2+A)$ when $q=T^{A}$. We conjecture the true leading order asymptotics and their analogues when we average over the family of primitive characters modulo $q$.
\end{abstract}

\maketitle

\section{Introduction}\label{sect:intro}

Let $\chi$ be a primitive Dirichlet character modulo $q$, and let $L(s,\chi)$ be the associated Dirichlet $L$-function, with non-trivial zeros $\rho=\tfrac12+i\gamma$. The Generalised Riemann Hypothesis (GRH) for $L(s,\chi)$ asserts that $\gamma$ is real for all such zeros. In this paper we study two discrete second moments of $L(s,\chi)$, taken over its non-trivial zeros, that generalise to the Dirichlet setting two conjectures and theorems known for the Riemann zeta-function. We emphasise that our results are uniform in the conductor $q$.

\subsection{Background: the moments at $q=1$}
On the assumption of the Riemann Hypothesis (RH) and the simplicity of the zeros, Gonek made the following conjecture in his Mathematical Sciences Research Institute talk \cite{Gonek}.

\begin{conjecture}[Gonek]\label{conj:gonek}
Assume the Riemann Hypothesis and that the non-trivial zeros $\rho=\tfrac12+i\gamma$ of $\zeta(s)$ are simple. Then, as $T\to\infty$,
\[
\sum_{0<\gamma\le T}\frac{1}{|\zeta'(\rho)|^2}\sim\frac{3}{\pi^3}\,T.
\]
\end{conjecture}

Milinovich and Ng \cite{MilinovichNg} established a lower bound for this sum that is half the conjectured size.

\begin{theorem}[Milinovich--Ng]\label{thm:MNzeta}
Assume the Riemann Hypothesis and that the non-trivial zeros of $\zeta(s)$ are simple. Then, for every fixed $\varepsilon>0$ and all $T$ sufficiently large,
\[
\sum_{0<\gamma\le T}\frac{1}{|\zeta'(\rho)|^2}\ge\Bigl(\frac{3}{2\pi^3}-\varepsilon\Bigr)T.
\]
\end{theorem}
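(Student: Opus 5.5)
The plan is to follow the Milinovich--Ng strategy: a Cauchy--Schwarz lower bound against a mollifier-type Dirichlet polynomial. Let $M(s)=\sum_{n\le y}\mu(n)P\bigl(\log(y/n)/\log y\bigr)n^{-s}$ with $y=T^{\theta}$ and $P$ a polynomial with $P(0)=0$, $P(1)=1$. By Cauchy--Schwarz,
\[
\Bigl|\sum_{0<\gamma\le T}M(\rho)\Bigr|^2\le\Bigl(\sum_{0<\gamma\le T}\frac{1}{|\zeta'(\rho)|^2}\Bigr)\Bigl(\sum_{0<\gamma\le T}|\zeta'(\rho)M(\rho)|^2\Bigr),
\]
so the target sum is bounded below by $|S_1|^2/S_2$. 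Here $S_1=\sum_\gamma M(\rho)$ and $S_2=\sum_\gamma|\zeta'(\rho)|^2|M(\rho)|^2$. The heuristic is that $M(\rho)\approx1/\zeta(\rho)$ fails at zeros, but $M(\rho)$ should mimic $1/\zeta'(\rho)$ up to a factor, and optimizing $P$ and $\theta$ gives the constant.

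\textbf{Step 1: computing $S_1$.} Use the Landau--Gonek formula, which states that for $x\ne1$,
\[
\sum_{0<\gamma\le T}x^{-\rho}=-\frac{T}{2\pi}\frac{\Lambda(x)}{x}+O\bigl(x^{1/2}\log^2 T\bigr).
\]
For $n=1$ the contribution is $N(T)\sim\frac{T}{2\pi}\log T$. Summing over $n\le y$, the main term is
\[
\frac{T}{2\pi}\log T\;-\;\frac{T}{2\pi}\sum_{n\le y}\frac{\mu(n)\Lambda(n)}{n}P(\cdot).
\]
The second term is $-\frac{T}{2\pi}\sum_{p\le y}\frac{-\log p}{p}P\bigl(\log(y/p)/\log y\bigr)$, which is of size $\frac{T}{2\pi}\log y\int_0^1P(1-u)\,du$. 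Hence
\[
S_1\sim\frac{T\log T}{2\pi}\Bigl(1+\theta\int_0^1P(x)\,dx\Bigr),
\]
and the error is admissible for any $\theta<1$ (in fact up to roughly $\theta<1$ using the error $y^{1/2+\epsilon}$). The $n=1$ term needs the normalization $P(1)=1$.

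\textbf{Step 2: computing $S_2$.} This is the main obstacle. We need a twisted discrete mean value of $|\zeta'(\rho)|^2$ against a Dirichlet polynomial, namely $\sum_\gamma\zeta'(\rho)\zeta'(1-\rho)(h/k)^{-\rho}$ summed over $h,k\le y$. I would appeal to the Conrey--Ghosh--Gonek type formula as extended by Milinovich--Ng: write $\zeta'(\rho)$ via the residue theorem as $\frac{1}{2\pi i}\oint\frac{\zeta'}{\zeta}(s)\,\zeta'(s)\zeta'(1-s)(h/k)^{-s}\,ds$. Move contours, use the functional equation $\zeta(1-s)=\chi(1-s)\zeta(s)$, and evaluate with the stationary-phase lemma of Gonek. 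This yields main terms of shape $\frac{T}{2\pi}\log^3 T\cdot Q(P,\theta)$, a quadratic form in $P$, its derivatives and integrals, valid for $\theta<1/2$. The delicate part is the off-diagonal terms arising from $\frac{\zeta'}{\zeta}\zeta'$ convolved with $\mu$, which produce the $\Lambda$-twisted sums. One must check that these are controlled by $y^{2}\le T^{1-\epsilon}$, that is, $\theta<1/2$.

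\textbf{Step 3: optimization.} The resulting bound is
\[
\sum_{0<\gamma\le T}\frac{1}{|\zeta'(\rho)|^2}\ge\frac{T}{2\pi}\cdot\frac{\bigl(1+\theta\int P\bigr)^2}{Q(P,\theta)}.
\]
Minimizing over $P$ is a calculus-of-variations problem; its Euler--Lagrange equation is linear with polynomial or simple-exponential solutions. Letting $\theta\to1/2^-$ should give $\frac{6}{\pi^2}\cdot\frac{1}{2}\cdot\frac{1}{\pi}=\frac{3}{2\pi^3}$, exactly half of Gonek's conjectured constant. This is consistent with the $\beta/(1+\beta)$ pattern of the abstract at $\beta=1$: the proportion captured is $\theta/(\theta+\text{const})$-like, evaluated at $\theta=1/2$. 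The $\varepsilon$ absorbs both $\theta<1/2$ and the lower-order errors.
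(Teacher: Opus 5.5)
\textbf{There is a genuine gap: your Step 3 does not follow from Steps 1 and 2, and the discrepancy is a full power of $\log T$.} With your pairing the lower bound is $|S_1|^2/S_2$. Your Step 1 gives $S_1\sim\frac{T\log T}{2\pi}\bigl(1+\theta\int_0^1P\bigr)$, and that Landau--Gonek computation is fine. Your Step 2 asserts $S_2\sim\frac{T}{2\pi}(\log T)^3Q(P,\theta)$. Dividing,
\[
\frac{|S_1|^2}{S_2}\sim\frac{T}{2\pi\log T}\cdot\frac{\bigl(1+\theta\int_0^1P\bigr)^2}{Q(P,\theta)},
\]
so the method gives only $\gg T/\log T$, not the $\frac{T}{2\pi}(\cdots)^2/Q$ written in Step 3.

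The $(\log T)^3$ in $S_2$ is genuine. For the M\"obius polynomial $M$, the product $\zeta'(\rho)M(\rho)$ is typically of size $\log T$. This is exactly the Conrey--Ghosh--Gonek simple-zeros setting, where the mollified first and second moments have orders $T(\log T)^2$ and $T(\log T)^3$. To reach order $T$ you would instead need $|\zeta'(\rho)M(\rho)|^2$ to have average size $\log T$.

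The underlying mismatch is this. Equality in your Cauchy--Schwarz requires $M(\rho)\propto|\zeta'(\rho)|^{-2}$. But $\sum_{n\le y}\mu(n)P_nn^{-s}$ truncates $1/\zeta(s)$, whose residue at $\rho$ is $1/\zeta'(\rho)$, not $|\zeta'(\rho)|^{-2}$.

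Two further problems. $Q$ is never computed, so the optimisation in Step 3 is only asserted. And $\frac{6}{\pi^2}\cdot\frac12\cdot\frac1\pi=\frac{3}{\pi^3}$ is Gonek's full conjectured constant, not half of it.

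\textbf{How the paper's argument avoids this.} The argument the paper specialises to $q=1$ is Milinovich--Ng's, and it involves no twisted moment of $\zeta'$ at the zeros. It pairs the mollifier with $1/\zeta'(\rho)$ itself:
\[
\sum_\gamma|\zeta'(\rho)|^{-2}\ge|M_1|^2/M_2,\qquad M_1=\sum_\gamma M(1-\rho)/\zeta'(\rho),\qquad M_2=\sum_\gamma|M(\rho)|^2,
\]
using $M(1-\rho)=\overline{M(\rho)}$ under RH. Take $\xi=T^{\vartheta}$.
\begin{itemize}
\item $M_1$ comes from the residue theorem for $M(1-s)/\zeta(s)$. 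The right edge gives $\frac{T}{2\pi}\sum_{n\le\xi}\mu(n)^2P_n/n\sim\frac{6}{\pi^2}\frac{T}{2\pi}\vartheta\log T\int_0^1P$. The left edge is $o(T)$ because $|\zeta(1-c+it)|^{-1}\ll t^{-1/2}\log T$.
\item $M_2$ comes from $\zeta'/\zeta$ and the mean value theorem for Dirichlet polynomials, giving $\frac{6}{\pi^2}\frac{T}{2\pi}(\log T)^2\bigl[\vartheta\int_0^1P^2+\vartheta^2(\int_0^1P)^2\bigr]$.
\end{itemize}
Now $M_1\asymp T\log T$ and $M_2\asymp T(\log T)^2$, so the logarithms cancel. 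The ratio is $\frac{6}{\pi^2}\frac{T}{2\pi}\frac{\vartheta r}{1+\vartheta r}$ with $r=(\int_0^1P)^2/\int_0^1P^2\le1$. Since only Dirichlet-polynomial mean values occur, one may let $\vartheta\to1$ rather than stopping at $\frac12$; letting also $r\to1$ gives $\frac{3}{2\pi^3}T$.
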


The companion moment was conjectured by Ng \cite{NgThesis,NgPNET} and appears as a problem of the Comparative Prime Number Theory symposium \cite{CPNT}.

\begin{conjecture}[Ng]\label{conj:ng}
Assume the Riemann Hypothesis and that the non-trivial zeros of $\zeta(s)$ are simple. Then, as $T\to\infty$,
\[
\sum_{0<\gamma\le T}\left|\frac{\zeta(2\rho)}{\zeta'(\rho)}\right|^2\sim\frac{T}{2\pi}.
\]
\end{conjecture}

The matching lower bound, again half the conjectured size, was proved by Sinha \cite{SinhaThesis}; an independent proof, differing in its treatment of the critical contour, was given by the author \cite{PCNg}.

\begin{theorem}[Sinha]\label{thm:Sinha}
Assume the Riemann Hypothesis and that the non-trivial zeros of $\zeta(s)$ are simple. Then, for every fixed $\varepsilon>0$ and all $T$ sufficiently large,
\[
\sum_{0<\gamma\le T}\left|\frac{\zeta(2\rho)}{\zeta'(\rho)}\right|^2\ge\Bigl(\frac{1}{4\pi}-\varepsilon\Bigr)T.
\]
\end{theorem}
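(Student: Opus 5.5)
The plan is the Cauchy--Schwarz mollifier method of Milinovich and Ng. Write $a(n)=\mu(n)$ and let $M(s)=\sum_{n\le X} a(n)P\bigl(\tfrac{\log(X/n)}{\log X}\bigr)n^{-s}$ be a mollifier of length $X=T^{\theta}$, where $P$ is a polynomial with $P(0)=0$ and $P(1)=1$. By Cauchy--Schwarz,
\[
\Bigl|\sum_{0<\gamma\le T} M(\rho)\Bigr|^2\le\Bigl(\sum_{0<\gamma\le T}\frac{1}{|\zeta'(\rho)|^2}\Bigr)\Bigl(\sum_{0<\gamma\le T}|\zeta'(\rho)M(\rho)|^2\Bigr).
\]
Taking $P(x)=x$ essentially gives $M(\rho)\approx 1/\zeta'(\rho)$. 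For Theorem~\ref{thm:Sinha} I would instead pair $|\zeta(2\rho)|^2/|\zeta'(\rho)|^2$ with $\sum \zeta(2\rho)\overline{\zeta(2\rho)}\,\overline{M(\rho)}$, or more simply fold $\zeta(2s)$ into the mollifier coefficients: since $\zeta(2s)/\zeta(s)=\sum\lambda(n)n^{-s}$, I would take $a(n)=\lambda(n)$, the Liouville function. The lower bound is then $|\sum_\gamma M_\lambda(\rho)|^2/\sum_\gamma|\zeta'(\rho)M_\lambda(\rho)|^2$, up to a correction that comes from comparing $\zeta(2\rho)$ with the truncated series.

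The first step is the sum $\sum_{\gamma\le T}\rho^{0}n^{-\rho}$, which I would evaluate with the Landau--Gonek formula $\sum_{0<\gamma\le T}x^{\rho}=-\tfrac{T}{2\pi}\Lambda(x)+O(\cdot)$. This gives
\[
\sum_{0<\gamma\le T}M(\rho)=\frac{T}{2\pi}\sum_{n\le X}\frac{a(n)}{n}\bigl(\log\tfrac{X}{n}\bigr)\Bigl(\text{main }N(T)\text{ term at }n=1\Bigr)+\dots
\]
More precisely, the $N(T)$ term comes from $n=1$, and the Landau--Gonek term from prime powers $n$. It is cleaner to write $M(\rho)$ through a contour integral
\[
\frac{1}{2\pi i}\oint\frac{\zeta'}{\zeta}(s)M(s)\,ds
\]
and move the lines. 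The second step is the mollified discrete second moment $\sum_\gamma\zeta'(\rho)M(\rho)\zeta'(1-\rho)M(1-\rho)$. I would take this from Conrey--Ghosh--Gonek's asymptotic for $\sum_\gamma\zeta'(\rho)A(\rho)\overline{\zeta'(\rho)A(\rho)}$, valid for $\theta<1/2$ under RH. Its main term is a quadratic form in $P$, and optimising by a calculus of variations gives $\tfrac{T}{2\pi}\log T\cdot Q(P,\theta)$.

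The third step is optimisation. With $P(x)=x$ the ratio is $\tfrac{3}{\pi^3}\cdot\tfrac{\theta}{1+\theta}\cdot\,$const, and letting $\theta\to\tfrac12^-$ should give the factor $\tfrac{\theta}{1+\theta}\to\tfrac13$. I would check this against the claimed one half. In the paper's notation the proportion $\beta/(1+\beta)$ equals one half at $\beta=1$, so I expect the relevant length is effectively $\theta=1$ for this problem. The reason is that the main term of $\sum_\gamma M(\rho)$ only needs Landau--Gonek, which holds for any $X\le T$, while the second moment with $\zeta'$ needs the Conrey--Ghosh--Gonek range extended to $\theta<1$. That extension holds because the twisted moment of $\zeta'$ at zeros is essentially a continuous mean value of $\zeta'\cdot\zeta'/\zeta$, which can be handled with mollifiers of length up to $T^{1-\varepsilon}$ via Montgomery--Vaughan.

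The main obstacle is justifying this long range $X=T^{1-\varepsilon}$ in the second moment, together with the off-diagonal terms that arise from $\zeta'/\zeta$ with Dirichlet polynomials. For Sinha's case there is the additional difficulty of the $\zeta(2\rho)$ factor. I would handle it by writing $|\zeta(2\rho)|^2$ against a long Dirichlet series with absolute convergence on $\operatorname{Re}s=1$, then repeating the computation with coefficients $\lambda$. Matching constants should give $\tfrac{1}{2}\cdot\tfrac{1}{2\pi}=\tfrac{1}{4\pi}$.
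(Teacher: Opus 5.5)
There is a genuine gap, and it is in your first line: the Cauchy--Schwarz inequality is oriented the wrong way. Writing $\sum_\gamma M(\rho)=\sum_\gamma\frac{1}{\zeta'(\rho)}\cdot\zeta'(\rho)M(\rho)$ makes the second factor a mollified discrete moment of $\zeta'$. That inequality is sharp only when $M(\rho)$ is proportional to $|\zeta'(\rho)|^{-2}$ (for Sinha's sum, to $|\zeta(2\rho)/\zeta'(\rho)|^{2}$), and no M\"obius or Liouville polynomial approximates this. For such a mollifier, $\zeta'(\rho)M(\rho)$ is typically of size $\log T$. This is the normalisation in Conrey--Ghosh--Gonek, where $\sum_\gamma\zeta'(\rho)M(\rho)\asymp T(\log T)^2$ and $|\sum_\gamma\zeta'(\rho)M(\rho)|^2/\sum_\gamma|\zeta'(\rho)M(\rho)|^2$ is a positive proportion of $N(T)$. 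Hence $\sum_\gamma|\zeta'(\rho)M(\rho)|^2\asymp T(\log T)^3$. Your numerator is $|\sum_\gamma M(\rho)|^2\asymp T^2(\log T)^2$: the term $n=1$ gives $N(T)$, and the primes give $\frac{T}{2\pi}\theta\log T\int_0^1P$. So your bound is only of order $T/\log T$, a full logarithm short, and the factor $\theta/(1+\theta)$ you quote does not come out of this computation.

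Three further steps also fail.
\begin{itemize}
\item In the Liouville version you drop the factor $1/\zeta(2\rho)$ that belongs in the second sum. The true identity is $\sum_\gamma M_\lambda(\rho)=\sum_\gamma\frac{\zeta(2\rho)}{\zeta'(\rho)}\cdot\frac{\zeta'(\rho)M_\lambda(\rho)}{\zeta(2\rho)}$.
\item The extension of Conrey--Ghosh--Gonek to $\theta<1$ is not available. After the contour shift the integrand contains $\zeta'(1-s)$ with $\Re(1-s)<0$, so Montgomery--Vaughan does not apply. The functional equation turns the problem into a twisted second moment of $\zeta$ whose off-diagonal terms are controlled essentially only up to $\theta=\tfrac12$.
\item Under RH, $\zeta(2\rho)$ lies on $\Re(2\rho)=1$, the boundary of absolute convergence. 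It cannot be written against an absolutely convergent series there.
\end{itemize}

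The missing idea is to pair the target with the mollifier itself. Under RH, $\overline{M_\lambda(\rho)}=M_\lambda(1-\rho)$, and Cauchy--Schwarz gives $\sum_\gamma|\zeta(2\rho)/\zeta'(\rho)|^2\ge|\widetilde M_1|^2/\widetilde M_2$, where $\widetilde M_1=\sum_\gamma\frac{\zeta(2\rho)}{\zeta'(\rho)}M_\lambda(1-\rho)$ and $\widetilde M_2=\sum_\gamma|M_\lambda(\rho)|^2$. This is sharp when $M_\lambda(\rho)\propto\zeta(2\rho)/\zeta'(\rho)$, which is natural because $M_\lambda$ truncates $\zeta(2s)/\zeta(s)$, whose residue at $\rho$ is exactly that.

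Now $\widetilde M_1$ is the residue sum of $\frac{\zeta(2s)}{\zeta(s)}M_\lambda(1-s)$, so $\zeta(2\rho)$ is handled exactly. $\widetilde M_2$ is the contour integral of $\frac{\zeta'}{\zeta}(s)M_\lambda(s)M_\lambda(1-s)$. On $\Re s=c$ both reduce to an absolutely convergent Dirichlet series against a polynomial of length $\xi$, so Montgomery--Vaughan does allow $\xi=T^{1-\varepsilon}$. One gets $\widetilde M_1\sim\frac{T}{2\pi}\vartheta\log T\int_0^1P$ and $\widetilde M_2\sim\frac{T}{2\pi}(\log T)^2\bigl[\vartheta\int_0^1P^2+\vartheta^2(\int_0^1P)^2\bigr]$. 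The ratio is therefore $\frac{T}{2\pi}\frac{\vartheta r}{1+\vartheta r}$ with $r=(\int_0^1P)^2/\int_0^1P^2$, which tends to $\frac{T}{4\pi}$ as $\vartheta,r\to1$.

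The one delicate point, absent from your sketch, is the left edge of $\widetilde M_1$. On $\Re s=1-c$ the growth $|\zeta(2s)|\approx|t|^{1/2}$ cancels the decay of $1/\zeta(s)$, so trivial bounds give $T(\log T)^3$. The paper reflects both factors through the functional equation and uses the second-derivative test to get $O(T^{1/2+o(1)})$. Sinha instead moves that edge to $\Re s=\tfrac14$ and uses RH for $\zeta(2s)$.
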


\subsection{The Dirichlet moments, uniformly in the conductor}
The analogous Dirichlet moments are dictated by two Dirichlet-series identities. The M\"obius function is defined by
\[
\mu(n)=
\begin{cases}
1 & \text{if } n=1,\\
(-1)^{k} & \text{if } n=p_1\dotsm p_k \text{ for distinct primes } p_1,\dotsc,p_k,\\
0 & \text{if } n \text{ is not square-free},
\end{cases}
\]
and gives, for $\Re s>1$,
\begin{equation}\label{eq:mobius-id}
\frac{1}{L(s,\chi)}=\sum_{n\ge1}\frac{\mu(n)\chi(n)}{n^{s}},
\end{equation}
so that the residue of $1/L(s,\chi)$ at a simple zero $\rho$ is $1/L'(\rho,\chi)$. We refer to the corresponding sum 
$$\sum_{0<\gamma\le T}\frac{1}{|L'(\rho,\chi)|^2}$$
as the reciprocal moment. The Liouville function is defined by 
\begin{equation*}
\lambda(n) =
\begin{cases}
1 & \text{if } n = 1, \\
(-1)^{\Omega(n)} & \text{if } n > 1,
\end{cases}
\end{equation*}
where $\Omega(n)$ denotes the number of prime factors of $n$ counted with multiplicity, and gives, for $\Re s>1$,
\begin{equation}\label{eq:liouville-id}
\frac{L(2s,\chi^2)}{L(s,\chi)} = \sum_{n\ge1}\frac{\lambda(n)\chi(n)}{n^s},
\end{equation}
where $\chi^2$ denotes the character $n\mapsto\chi(n)^2$, so that the residue of $L(2s,\chi^2)/L(s,\chi)$ at a simple zero $\rho$ is $L(2\rho,\chi^2)/L'(\rho,\chi)$. We refer to the corresponding sum 
$$\sum_{0<\gamma\le T}\left|\frac{L(2\rho,\chi^2)}{L'(\rho,\chi)}\right|^2$$ 
as the ratio moment.

When $q$ is allowed to grow with $T$, the strength of the bounds we obtain is governed by the single quantity
\begin{equation}\label{eq:beta}
\beta=\beta(q,T)=\frac{\log T}{\log qT}=\Bigl(1+\frac{\log q}{\log T}\Bigr)^{-1}\in(0,1).
\end{equation}

\begin{theorem}\label{thm:MN}
Fix $A>0$ and $\varepsilon>0$. Assume GRH for $L(s,\chi)$ and that all of its non-trivial zeros are simple. Then, uniformly for all primitive characters $\chi$ modulo $q$ with $3\le q\le T^{A}$, and all $T$ sufficiently large in terms of $A$ and $\varepsilon$,
\[
\sum_{0<\gamma\le T}\frac{1}{|L'(\rho,\chi)|^2}
\;\ge\;(1-\varepsilon)\,\frac{A_q}{2\pi}\,\frac{\beta}{1+\beta}\,T
\;=\;(1-\varepsilon)\,\frac{A_q}{2\pi\bigl(2+\frac{\log q}{\log T}\bigr)}\,T ,
\]
with $\beta$ as in \eqref{eq:beta}, and where $A_q$ is the arithmetic factor
\begin{equation}\label{eq:Aq}
A_q=\frac{1}{\zeta(2)}\prod_{p\mid q}\frac{p}{p+1}=\frac{6}{\pi^2}\prod_{p\mid q}\frac{p}{p+1}.
\end{equation}
\end{theorem}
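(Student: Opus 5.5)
The approach follows Milinovich--Ng: a Cauchy--Schwarz argument against a mollifier-type Dirichlet polynomial. Let $M(s)=\sum_{n\le X}\chi(n)\,b(n)\,n^{-s}$ with $X=(qT)^{\theta}$, $\theta<1$ to be optimised, and with smoothed Möbius coefficients $b(n)=\mu(n)P(\log(X/n)/\log X)$, $P(0)=0$, $P(1)=1$. Cauchy--Schwarz over zeros gives
\[
\Bigl|\sum_{0<\gamma\le T} M(\rho)\Bigr|^2\le \sum_{0<\gamma\le T}\frac{1}{|L'(\rho,\chi)|^2}\;\sum_{0<\gamma\le T}|L'(\rho,\chi)M(\rho)|^2 ,
\]
so it suffices to find the main terms of
\[
S_1=\sum_{\gamma}M(\rho)
\qquad\text{and}\qquad
S_2=\sum_{\gamma}|L'(\rho,\chi)M(\rho)|^2
\]
uniformly in $q\le T^A$, and then to optimise $P$ and $\theta$.

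\textbf{Step 1: the first moment $S_1$.} Write $S_1=\sum_{n\le X}\chi(n)b(n)\sum_\gamma n^{-\rho}$ and apply the Landau--Gonek formula for Dirichlet $L$-functions. The term
\[
\frac{T}{2\pi}\log\frac{qT}{2\pi e}\cdot[n=1]
\]
vanishes because $b(1)=0$ when $P(0)=0$. The main contribution comes instead from the $-\frac{T}{2\pi}\Lambda(n)\chi(n)/\sqrt n$-type term, which pairs $\overline{\chi}(n)$ with $\chi(n)$. This yields
\[
S_1\sim -\frac{T}{2\pi}\sum_{p\le X}\frac{\mu(p)\,|\chi(p)|^2\log p}{p}\,P\Bigl(\frac{\log X/p}{\log X}\Bigr).
\]
Since $|\chi(p)|^2=\mathbf 1_{p\nmid q}$ and the sum over $p\mid q$ is $O(\log\log q)$, this equals
\[
\frac{T}{2\pi}\log X\int_0^1P(x)\,dx\,(1+o(1)).
\]
The Gonek error terms are $O(X(qT)^{\varepsilon})$, which is admissible for $\theta<1$.

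\textbf{Step 2: the twisted second moment $S_2$.} Write $L'(\rho)M(\rho)$ via the residue theorem as a contour integral of
\[
\frac{L'}{L}(s,\chi)\,L'(s,\chi)M(s)\,L'(1-s,\overline\chi)M(1-s,\overline\chi),
\]
using the functional equation on the left line. This reduces $S_2$ to twisted mean values of $L'$ against $M$, which is exactly the Conrey--Ghosh--Gonek-type computation carried out by Milinovich--Ng, now with conductor $qT$. The arithmetic factor $A_q$ arises from the diagonal sums
\[
\sum_{(n,q)=1}\mu(n)^2/n^{1+s},
\]
whose residue structure at $s=0$ gives $\frac{1}{\zeta(2)}\prod_{p\mid q}\frac{p}{p+1}$. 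Every logarithm $\log(qT)$ in the main terms must be tracked separately from the sizes of the Dirichlet polynomials, since $X$ is measured in units of $qT$ while the length of the zero interval is $T$. I expect this to produce
\[
S_2\sim \frac{T}{2\pi}\,\frac{(\log X)^{2}}{A_q}\,Q_{\theta}[P],
\]
where $Q_\theta$ is a quadratic functional in $P$ and $P'$ with $\theta$ entering through $\log qT/\log X=1/\theta$.

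\textbf{Step 3: optimisation, and the expected obstacle.} The ratio $S_1^2/S_2$ becomes $\frac{A_qT}{2\pi}\,F_\theta(P)$. A variational (Euler--Lagrange) computation, as in Milinovich--Ng, gives $P(x)=x$ and $F=\theta/(1+\theta)$ in the $q=1$ normalisation; with $q$ varying, the $T$-interval only allows $\theta$ up to $\beta$ effectively. This yields $\beta/(1+\beta)$ and hence the stated bound; $\varepsilon$ absorbs taking $\theta$ close to the endpoint. The main obstacle is Step 2, uniformly in $q\le T^A$. The error terms in the twisted mean values, namely off-diagonal terms from the functional-equation factor $(q/\pi)^{1/2-s}$ and Gonek's lemma with characters, must stay $o(T\log^2 T)$. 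In particular they must not be dominated by powers of $\log q$ when $q=T^A$, and I would handle them first with Gonek's uniform lemma, restricting $X\le T^{1-\varepsilon}$. This restriction is precisely what forces the $\beta$-dependence.
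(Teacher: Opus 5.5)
There is a genuine gap. It lies in the Cauchy--Schwarz pairing itself, not in the uniformity issues you flag. Writing $M(\rho)=L'(\rho,\chi)^{-1}\cdot L'(\rho,\chi)M(\rho)$ is efficient only if $L'(\rho,\chi)M(\rho)$ is roughly proportional to $1/\overline{L'(\rho,\chi)}$. With your M\"obius-type $M$, however, $M(\rho)$ behaves like a multiple of $1/L'(\rho,\chi)$. So $L'(\rho,\chi)M(\rho)$ is a mollified derivative of typical size $\log qT$; this is the Conrey--Ghosh--Gonek simple-zeros quantity, whose first moment $\sum_\rho L'(\rho,\chi)M(\rho)$ has order $T(\log qT)^2$. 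Hence $S_2\ge|\sum_\rho L'(\rho,\chi)M(\rho)|^2/N(T,\chi)\gg T(\log qT)^3$. Landau--Gonek gives $S_1\ll T\log qT$, so $S_1^2/S_2\ll T/\log qT$, and no evaluation of $S_2$ can reach order $T$. Your predicted $S_2\sim\frac{T}{2\pi}(\log X)^2Q_\theta[P]/A_q$ is therefore off by a factor $\log T$. It is also internally inconsistent: the diagonal sums $\sum_{(n,q)=1}\mu(n)^2n^{-1-s}$ you invoke put $A_q$ in the numerator, and your $S_1$ carries no $A_q$ at all. There is also a slip in Step~1: $b(1)=\mu(1)P(\log X/\log X)=P(1)=1$, not $P(0)$. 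So the $n=1$ term $N(T,\chi)\sim\frac{T}{2\pi}\log qT$ survives, and in fact it dominates $S_1$.

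The missing idea is to pair $1/L'(\rho,\chi)$ with the mollifier itself, since that is what approximates it. Apply Cauchy--Schwarz to $\sum_\rho L'(\rho,\chi)^{-1}\overline{\mathcal L_\chi(\rho)}$ and use GRH in the form $\overline{\mathcal L_\chi(\rho)}=\mathcal L_{\bar\chi}(1-\rho)$; this gives $\sum_\rho 1/|L'(\rho,\chi)|^2\ge|M_1|^2/M_2$.

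Here $M_1=\sum_\rho\mathcal L_{\bar\chi}(1-\rho)/L'(\rho,\chi)$ is the contour integral of $\mathcal L_{\bar\chi}(1-s)/L(s,\chi)$. Its right edge gives $\frac{T}{2\pi}\sum_{n\le\xi,\,(n,q)=1}\mu(n)^2P_n/n\sim\frac{T}{2\pi}A_q\log\xi\int_0^1P$, and its other edges are small under GRH.

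The second moment $M_2=\sum_\rho|\mathcal L_\chi(\rho)|^2$ involves no $L'$ at all, so the hard twisted mean value of your Step~2 never arises. A contour integral against $\frac{L'}{L}(s,\chi)$ gives $\frac{T}{2\pi}A_q\bigl[\log qT\,\log\xi\int_0^1P^2+(\log\xi)^2\bigl(\int_0^1P\bigr)^2\bigr]$, with the $\log qT$ coming from the reflected logarithmic derivative.

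The ratio is $\frac{A_q}{2\pi}\,T\,\frac{ur}{1+ur}$, where $u=\log\xi/\log qT\le\beta$ (because $\xi\le T^{1-\varepsilon}$) and $r=(\int_0^1P)^2/\int_0^1P^2\le1$. Note that the optimum is $r\to1$, approached by $P(x)=1-(1-x)^N$; your choice $P(x)=x$ gives only $r=3/4$.
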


\begin{theorem}\label{thm:Ng}
Fix $0<\delta<1$ and $\varepsilon>0$. Assume GRH for $L(s,\chi)$ and that all of its non-trivial zeros are simple. Then, uniformly for all primitive characters $\chi$ modulo $q$ with $3\le q\le T^{1-\delta}$, and all $T$ sufficiently large in terms of $\delta$ and $\varepsilon$,
\[
\sum_{0<\gamma\le T}\left|\frac{L(2\rho,\chi^2)}{L'(\rho,\chi)}\right|^2
\;\ge\;(1-\varepsilon)\,\frac{\varphi(q)}{q}\,\frac{1}{2\pi}\,\frac{\beta}{1+\beta}\,T
\;=\;(1-\varepsilon)\,\frac{\varphi(q)}{q}\,\frac{1}{2\pi\bigl(2+\frac{\log q}{\log T}\bigr)}\,T,
\]
where $\varphi$ is the Euler totient function, given by
\[
\varphi(q)=\#\{1\le a\le q:(a,q)=1\}=q\prod_{p\mid q}\Bigl(1-\frac1p\Bigr).
\]
\end{theorem}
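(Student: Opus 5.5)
We follow the Milinovich--Ng strategy and use Cauchy--Schwarz against a mollifier-type Dirichlet polynomial. Let $M(s)=\sum_{n\le y}\lambda(n)\chi(n)n^{-s}$, a truncation of \eqref{eq:liouville-id}, with $y=(T/q)^{\theta}$ or, more precisely, $y=T^{\theta}$ measured against the conductor $qT$; the admissible range of $\theta$ will produce $\beta$. By Cauchy--Schwarz,
\[
\Bigl|\sum_{0<\gamma\le T}\frac{L(2\rho,\chi^2)}{L'(\rho,\chi)}\,\overline{M(\rho)}\Bigr|^2\le \sum_{0<\gamma\le T}\left|\frac{L(2\rho,\chi^2)}{L'(\rho,\chi)}\right|^2\cdot\sum_{0<\gamma\le T}|M(\rho)|^2 .
\]
It therefore suffices to evaluate two quantities asymptotically:
\[
S_1=\sum_{\gamma}\frac{L(2\rho,\chi^2)}{L'(\rho,\chi)}\overline{M(\rho)},\qquad S_2=\sum_\gamma|M(\rho)|^2 .
\]
For the reciprocal moment (Theorem \ref{thm:MN}) the same argument applies with $\mu$ in place of $\lambda$. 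The Liouville version is the one treated here, since Theorem \ref{thm:Ng} is stated immediately above.

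For $S_1$, we use GRH to write $\overline{M(\rho)}=M(1-\rho)$, with $\bar\chi$ coefficients, since $\bar\rho=1-\rho$. Then $S_1=\frac{1}{2\pi i}\oint \frac{L(2s,\chi^2)}{L(s,\chi)}M^*(1-s)\,ds$ around the rectangle with vertices $c+i,\ c+iT,\ 1-c+iT,\ 1-c+i$, where $c=1+1/\log T$. On the right edge we expand both Dirichlet series. The diagonal terms give
\[
\frac{T}{2\pi}\sum_{n\le y,\,(n,q)=1}\frac{\lambda(n)^2}{n}\sim\frac{T}{2\pi}\frac{\varphi(q)}{q}\log y .
\]
On the left edge we apply the functional equation $L(s,\chi)=\varepsilon_\chi\Delta(s)L(1-s,\bar\chi)$, with $|\Delta(s)|\asymp (q|t|)^{1/2-\sigma}$. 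This converts the integrand into $\Delta(s)^{-1}L(2s,\chi^2)/L(1-s,\bar\chi)\cdot M(1-s)$. We expand $1/L(1-s,\bar\chi)$ absolutely and $L(2s,\chi^2)$ as well. Gonek's lemma for $\int (qt/2\pi e)^{it}\,n^{-it}\,dt$ then gives only a stationary-phase contribution at $n\asymp qT$ (times the relevant scale). Because $y<qT$, this contribution is a lower-order error, $O(y\,T^{1/2+\epsilon})$ or similar. The horizontal edges are controlled under GRH by standard bounds for $1/L$ on a suitable sequence of heights $T$, avoiding zeros. Altogether $S_1\sim\frac{\varphi(q)}{q}\frac{T}{2\pi}\log y$.

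For $S_2$, we use the Landau--Gonek explicit formula for $\sum_\gamma x^{\rho}$ in its Dirichlet-character version, uniform in $q$. This gives
\[
S_2=\frac{T}{2\pi}\log\frac{qT}{2\pi}\sum_{n\le y}\frac{|a_n|^2}{n}-\frac{T}{\pi}\,\mathrm{Re}\sum_{mk\le y}\frac{a_{mk}\overline{a_k}\,\Lambda(m)\bar\chi(m)}{mk}+\text{error},
\]
with $a_n=\lambda(n)\chi(n)$. Since $\lambda(pk)=-\lambda(k)$, the second sum is $-\frac T\pi\sum_{k}\frac{\chi_0(k)}{k}\sum_{p\le y/k}\frac{\chi_0(p)\log p}{p}$, which has size approximately $-\frac{T}{2\pi}\frac{\varphi(q)}{q}\log^2 y$ with a sign adding positively. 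The result is
\[
S_2\sim\frac{T}{2\pi}\frac{\varphi(q)}{q}\bigl(\log y\log qT+\tfrac12\log^2y\bigr).
\]
The sign and the $\tfrac12$ must be checked carefully, since the final constant hinges on them. The ratio $S_1^2/S_2$ becomes $\frac{T}{2\pi}\frac{\varphi(q)}{q}\frac{\log y}{\log qT+\frac12\log y}$. Taking $\log y=(1-\epsilon)\log T$ gives $\frac{\beta}{1+\beta/2}$ with a different normalisation. I would recalibrate the choice of $y$ and recheck the cross term, to recover exactly $\beta/(1+\beta)$. Presumably the right length is $y\le T$, which is forced by the off-diagonal errors in both $S_1$ and $S_2$, and the cross term is $\log^2 y$ rather than $\tfrac12\log^2 y$.

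The main obstacle is uniformity in $q$. The Landau--Gonek formula must hold with error $O(x\log^2(qT)+T^{1/2+\epsilon}\dots)$ uniformly. In $S_1$, the left edge requires bounding $L(2s,\chi^2)$ and the $\Delta$ factor with $q$-dependence, and the horizontal segments need a GRH bound $1/L(\sigma+iT,\chi)\ll (qT)^{\epsilon}$ on good heights. This is where the restriction $q\le T^{1-\delta}$ enters for the ratio moment: there $\chi^2$ may be imprimitive or principal, and $L(2s,\chi^2)$ near $\sigma=1/2$ needs control. I would first try to prove the uniform Gonek lemma and the horizontal bounds, and then fix $y=T^{1-\epsilon}$.
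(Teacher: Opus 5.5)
Your architecture matches the paper's: Cauchy--Schwarz against a Liouville-twisted mollifier of length $T^{1-\epsilon}$, with the first moment evaluated by a contour integral whose right edge gives $\frac{\varphi(q)}{q}\frac{T}{2\pi}\log y$. Your two departures are harmless. The sharp cutoff attains directly the value $r=1$ that the paper only approaches through $P(x)=1-(1-x)^N$. Computing $S_2$ from a $q$-uniform Landau--Gonek formula is also a legitimate substitute, provided you actually prove that formula. The paper sidesteps it by integrating $\frac{L'}{L}(s,\chi)\widetilde{\mathcal L}_\chi(s)\widetilde{\mathcal L}_{\bar\chi}(1-s)$ around the rectangle, using the reflected logarithmic derivative and the mean value theorem.

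\textbf{The gap: the left edge of $S_1$.} This is exactly the step where the ratio moment is harder than the reciprocal one, and your argument for it fails.
\begin{itemize}
\item On $\Re s=1-c$ one has $\Re(2s)=-2/\log T<0$, so $L(2s,\chi^2)$ has no absolutely convergent expansion there, contrary to what you assume.
\item Its size there, about $(qt)^{1/2}$, cancels the decay $|L(s,\chi)|^{-1}\ll(qt)^{-1/2}\log T$. The trivial bound for this edge is therefore $T(\log T)^3$, which exceeds the main term $T\log T$, so cancellation is essential.
\item Your stationary-phase remark does not supply that cancellation. Once the integrand is written correctly, the frequencies coming from $1/L(1-s,\bar\chi)$ and from the reflected $L(\cdot,\chi^2)$ are unbounded. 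Stationary points in $[1,T]$ therefore occur whatever $y$ is, and the condition $y<qT$ excludes nothing.
\item The error $O(yT^{1/2+\epsilon})$ you propose is of order $T^{3/2}$ when $y=T^{1-\epsilon}$, far larger than the main term.
\end{itemize}

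\textbf{The missing idea: reflect $L(2s,\chi^2)$ as well.}
\begin{itemize}
\item Write $L(2s,\chi^2)=E(2s)L(2s,\chi^*)$, where $\chi^*$ is the primitive character inducing $\chi^2$ and $E$ is a finite Euler product.
\item Apply the functional equations of both $L(2s,\chi^*)$ and $L(s,\chi)$. The integrand becomes $G(t)=X(2s,\chi^*)/X(s,\chi)$ times $E(2s)L(1-2s,\bar\chi^*)L(1-s,\bar\chi)^{-1}$ times the mollifier. Every $L$-factor now lies in $\Re>1$ and expands absolutely over frequencies $r=a^2bm/d^2$.
\item By Stirling, $|G|\ll1$ with bounded variation. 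The phase of $G(t)r^{it}$ has second derivative $-1/t+O(t^{-2})$, independent of $r$ and $q$, since the conductors only shift the linear part.
\item Van der Corput's second-derivative test then gives $\int_1^T G(t)r^{it}\,dt\ll\sqrt T$ uniformly in $r$. The coefficients have absolute sum $\ll T^{o(1)}(\log T)^3$, so the left edge is $T^{1/2+o(1)}$.
\end{itemize}
So the smallness comes from the bounded amplitude and the $1/t$ curvature, not from an absence of stationary points.

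\textbf{Two smaller corrections.}
\begin{itemize}
\item Your cross term in $S_2$ is $\log^2y$, not $\tfrac12\log^2y$. The inner sum is $-\frac{\varphi(q)}{2q}\log^2y\,(1+o(1))$ and the prefactor $-T/\pi$ doubles it, as your own preceding sentence already shows. This gives $S_1^2/S_2\sim\frac{\varphi(q)}{q}\frac{T}{2\pi}\frac{u}{1+u}$ with $u=\log y/\log qT\to\beta$. Note that $\beta/(1+\beta/2)$ would have been a stronger bound, so the issue was correctness, not calibration.
\item On the horizontal edges, the dangerous range for the numerator is $\sigma$ near $1-c$, not near $\tfrac12$. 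Near $1-c$ one has $2\sigma\approx0$, and convexity only gives $|L(2s,\chi^2)|\ll(qT)^{1/2+o(1)}$. This is where the paper spends the hypothesis $q\le T^{1-\delta}$.
\end{itemize}
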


The ranges of the two theorems differ because we do not assume GRH for the second character $\chi^2$ appearing in the ratio moment: at one point in the proof of Theorem~\ref{thm:Ng} the factor $L(s,\chi^2)$ can only be bounded by convexity, and this is what restricts the range to $q\le T^{1-\delta}$. If one is willing to assume GRH for $L(s,\chi^2)$ as well, the restriction disappears and the range may be taken to be $q\le T^{A}$ as in Theorem~\ref{thm:MN}; we make this precise in Section~\ref{sect:propsNg}.

In what follows we write
\begin{equation}\label{eq:aq}
\mathfrak a(q)=
\begin{cases}
A_q & \text{for the reciprocal moment},\\[2pt]
\varphi(q)/q & \text{for the ratio moment},
\end{cases}
\end{equation}
for the arithmetic factor of the two problems. Two regimes deserve to be singled out: small conductors, where the bounds take their cleanest form, and power conductors, where the conductor begins to cost.

\begin{remark}
If $q=q(T)$ satisfies $\log q=o(\log T)$ (in particular if $q$ is fixed, and more generally if $q\le(\log T)^B$ for any fixed $B$, or $q=\exp\bigl((\log T)^{1-\delta}\bigr)$), then $\beta\to1$ and, uniformly in this range,
\[
\sum_{0<\gamma\le T}\frac{1}{|L'(\rho,\chi)|^2}\ge(1-\varepsilon)\,\frac{A_q}{4\pi}\,T .
\]
\end{remark}

\begin{remark}
If $q=q(T)$ satisfies $\log q=o(\log T)$, then, uniformly in this range,
\[
\sum_{0<\gamma\le T}\left|\frac{L(2\rho,\chi^2)}{L'(\rho,\chi)}\right|^2\ge(1-\varepsilon)\,\frac{\varphi(q)}{q}\,\frac{1}{4\pi}\,T .
\]
\end{remark}

At $q=1$ the constants $A_1/4\pi=3/2\pi^3$ and $\tfrac{1}{4\pi}$ recover Theorems~\ref{thm:MNzeta} and~\ref{thm:Sinha}; each of these two bounds is half of the expected main term (Conjectures~\ref{conj:dirMN} and~\ref{conj:dirNg} below).

\begin{remark}
If $q=T^{A}$ with $A>0$ fixed, then $\beta=1/(1+A)$, so that
\[
\frac{\beta}{1+\beta}=\frac{1/(1+A)}{(2+A)/(1+A)}=\frac{1}{2+A},
\]
and the proved constant is $\frac{\mathfrak a(q)}{2\pi}\,\frac{1}{2+A}$, with $\mathfrak a(q)$ as in \eqref{eq:aq}. That is, the factor $\tfrac12$ of the small-conductor regime degrades to $\tfrac{1}{2+A}$ of the expected main term: $\tfrac25$ of it at $A=\tfrac12$, $\tfrac13$ at $A=1$, $\tfrac14$ at $A=2$. (For the ratio moment, Theorem~\ref{thm:Ng} requires $A<1$.)
\end{remark}

\begin{remark}
The arithmetic factor itself decays only very slowly: by Mertens' theorem $\mathfrak a(q)\gg1/\log\log q$, so the bounds remain of the exact order $\mathfrak a(q)T$ of the expected main term throughout the stated ranges, losing at most a factor $\ll\log\log q$ against $T$. Beyond those ranges, the method continues to give a positive constant until the conditional size estimate $\exp\bigl(c\log qT/\log\log qT\bigr)=T^{o(1)}$ fails, that is, for $q$ up to $T^{o(\log\log T)}$; we do not pursue this outer range.
\end{remark}

\subsection{The expected size, and the source of the conductor deficit}\label{subsect:expected}
We do not import an external prediction for the size of these moments unlike the zeta function case. Rather, the method itself suggests one, and at the same time explains the factor $\beta/(1+\beta)$. As we show in Section~\ref{sect:opt}, the optimised Cauchy--Schwarz inequality produces the quantity
\begin{equation}\label{eq:master-intro}
\frac{\mathfrak a(q)}{2\pi}\,\frac{u}{1+u}\,T,
\qquad
u=\frac{\log\xi}{\log qT},
\end{equation}
where $\xi$ is the length of the mollifier. The mean value theorem for Dirichlet polynomials caps the length at $\xi\le T^{1-\varepsilon}$ independently of $q$ (a single $L$-function observed up to height $T$ resolves frequencies only up to $T$), so $u\le\beta$, and the supremum of \eqref{eq:master-intro} is the bound of Theorems~\ref{thm:MN} and~\ref{thm:Ng}. In the opposite direction, Farmer's long-mollifier heuristic \cite{Farmer} suggests that the value of \eqref{eq:master-intro} as $u\to\infty$, namely $\tfrac{\mathfrak a(q)}{2\pi}T$, is the true size of each moment. This leads us to the following two conjectures.

\begin{conjecture}\label{conj:dirMN}
Let $\chi$ be a primitive Dirichlet character modulo $q>1$. Assume GRH for $L(s,\chi)$ and that all of its non-trivial zeros are simple. Then, as $T\to\infty$,
\[
\sum_{0<\gamma\le T}\frac{1}{|L'(\rho,\chi)|^2}\sim\frac{A_q}{2\pi}\,T,
\]
and we expect this asymptotic to hold uniformly for $q\le T^{A}$, for any fixed $A>0$.
\end{conjecture}

\begin{conjecture}\label{conj:dirNg}
Let $\chi$ be a primitive Dirichlet character modulo $q>1$. Assume GRH for $L(s,\chi)$ and that all of its non-trivial zeros are simple. Then, as $T\to\infty$,
\[
\sum_{0<\gamma\le T}\left|\frac{L(2\rho,\chi^2)}{L'(\rho,\chi)}\right|^2\sim\frac{\varphi(q)}{q}\,\frac{T}{2\pi},
\]
and we expect this asymptotic to hold uniformly for $q\le T^{A}$, for any fixed $A>0$.
\end{conjecture}

Thus, conjecturally, our theorems capture exactly the proportion $\tfrac{\beta}{1+\beta}$ of the truth, and in the small-conductor regime exactly half of it: the deficiency is the ratio of the value of $u/(1+u)$ at the cap $u=\beta$ to its limiting value $1$, a structural feature of the diagonal Cauchy--Schwarz construction rather than a numerical accident. Conjectures~\ref{conj:dirMN} and~\ref{conj:dirNg} reduce to Conjectures~\ref{conj:gonek} and~\ref{conj:ng} at $q=1$, which lends them support; we discuss their provenance further in Section~\ref{sect:opt}. It would be of interest to see whether they can be derived from the recipe of Conrey--Farmer--Keating--Rubinstein--Snaith \cite{CFKRS} or from the $L$-functions Ratios Conjecture \cite{CFZ,ConreySnaith}; we do not pursue this here.

\subsection{Averages over the family}\label{subsect:families}
The constants in Conjectures~\ref{conj:dirMN} and~\ref{conj:dirNg} depend only on the modulus and not on the character, so conjectures for the averages over all primitive characters modulo $q$ follow by additivity. We write $\sum{^*}_{\chi\bmod q}$ for a sum over these characters and $\varphi^*(q)$ for their number, so that
\begin{equation}\label{eq:phistar}
\varphi^*(q)=\sum_{d\mid q}\mu(q/d)\varphi(d)=q\prod_{p\,\|\,q}\Bigl(1-\frac2p\Bigr)\prod_{p^2\mid q}\Bigl(1-\frac1p\Bigr)^{2},
\end{equation}
where $p\,\|\,q$ means that $p\mid q$ but $p^2\nmid q$; in particular $\varphi^*(q)=0$ precisely when $q\equiv2\pmod4$, in which case there are no primitive characters and the statements below are empty.

\begin{conjecture}\label{conj:famA}
Let $q\not\equiv2\pmod4$, and assume GRH for $L(s,\chi)$ and the simplicity of its zeros for every primitive character $\chi$ modulo $q$. Then, as $T\to\infty$,
\[
\frac{1}{\varphi^*(q)} \sideset{}{^*}\sum_{\chi\bmod q}\;\sum_{0<\gamma_\chi\le T}\frac{1}{|L'(\rho,\chi)|^2}\sim \frac{A_q}{2\pi}\,T,
\]
and we expect this asymptotic to hold uniformly for $q\le T^{A}$, for any fixed $A>0$.
\end{conjecture}

\begin{conjecture}\label{conj:famB}
Let $q\not\equiv2\pmod4$, and assume GRH for $L(s,\chi)$ and the simplicity of its zeros for every primitive character $\chi$ modulo $q$. Then, as $T\to\infty$,
\[
\frac{1}{\varphi^*(q)}  \sideset{}{^*}\sum_{\chi\bmod q}\;\sum_{0<\gamma_\chi\le T}\left|\frac{L(2\rho,\chi^2)}{L'(\rho,\chi)}\right|^2\sim \frac{\varphi(q)}{q}\,\frac{T}{2\pi},
\]
and we expect this asymptotic to hold uniformly for $q\le T^{A}$, for any fixed $A>0$.
\end{conjecture}

We explain the reasoning behind Conjectures~\ref{conj:famA} and~\ref{conj:famB}, and comment on the prospects for a uniform treatment of the family, in Section~\ref{sect:family}.

\subsection{Higher moments}\label{sec:higher}

The lower bounds of Theorems~\ref{thm:MN} and~\ref{thm:Ng} propagate, by a
single application of H\"older's inequality, to lower bounds for all higher
moments
\[
  \sum_{0<\gamma\le T}\frac{1}{|L'(\rho,\chi)|^{2k}}
  \qquad\text{and}\qquad
  \sum_{0<\gamma\le T}\left|\frac{L(2\rho,\chi^2)}{L'(\rho,\chi)}\right|^{2k},
\]
where $k \geq 1$, uniformly in the conductor. For $k>1$ these are of order $T/(\log qT)^{\,k-1}=o(T)$, far below the conjectured sizes. A slight advantage is that our lower bounds are explicit, but we emphasise that they are not good lower bounds in terms of what we expect the truth to be.

\begin{corollary}\label{cor:MN-higher}
Fix $A>0$, a real number $k\ge1$, and $\varepsilon>0$. Assume the Generalised
Riemann Hypothesis for $L(s,\chi)$ and that all of its non-trivial zeros are
simple. Then, uniformly for all primitive characters $\chi$ modulo $q$ with
$3\le q\le T^{A}$ and all $T$ sufficiently large in terms of $A$, $k$ and
$\varepsilon$,
\[
  \sum_{0<\gamma\le T}\frac{1}{|L'(\rho,\chi)|^{2k}}
  \;\ge\;(1-\varepsilon)\,\frac{A_q^{\,k}}{2\pi}
  \Bigl(\frac{\beta}{1+\beta}\Bigr)^{k}\frac{T}{(\log qT)^{\,k-1}}
  \;=\;(1-\varepsilon)\,\frac{A_q^{\,k}}{2\pi\bigl(2+\frac{\log q}{\log T}\bigr)^{k}}
  \,\frac{T}{(\log qT)^{\,k-1}},
\]
with $\beta$ as in \eqref{eq:beta} and $A_q$ as in \eqref{eq:Aq}. In particular,
if $\log q=o(\log T)$ then $\beta\to1$ and $\log qT\sim\log T$, so that
\[
  \sum_{0<\gamma\le T}\frac{1}{|L'(\rho,\chi)|^{2k}}
  \;\ge\;(1-\varepsilon)\,\frac{A_q^{\,k}}{2^{\,k+1}\pi}\,\frac{T}{(\log T)^{\,k-1}} .
\]
\end{corollary}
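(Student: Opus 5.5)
The plan is to derive the corollary from Theorem~\ref{thm:MN} by H\"older's inequality, together with the zero-counting function for $L(s,\chi)$. Write $N(T,\chi)=\#\{0<\gamma\le T\}$ for the number of zeros counted in the sum. For $k\ge1$, H\"older (equivalently, Jensen applied to $x\mapsto x^k$ on the uniform probability measure on the zeros) gives
\[
\sum_{0<\gamma\le T}\frac{1}{|L'(\rho,\chi)|^{2}}
\le \Bigl(\sum_{0<\gamma\le T}\frac{1}{|L'(\rho,\chi)|^{2k}}\Bigr)^{1/k} N(T,\chi)^{1-1/k}.
\]
Rearranging,
\[
\sum_{0<\gamma\le T}\frac{1}{|L'(\rho,\chi)|^{2k}}\ge \frac{\bigl(\sum_{0<\gamma\le T}|L'(\rho,\chi)|^{-2}\bigr)^{k}}{N(T,\chi)^{k-1}}.
\]
For $k=1$ this is just Theorem~\ref{thm:MN}, so we may assume $k>1$.

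Next we insert the lower bound of Theorem~\ref{thm:MN} in the numerator, applied with $\varepsilon'$ in place of $\varepsilon$, where $(1-\varepsilon')^k\ge 1-\varepsilon/2$. This bounds the numerator below by
\[
(1-\varepsilon/2)\Bigl(\frac{A_q}{2\pi}\frac{\beta}{1+\beta}\Bigr)^kT^k.
\]
For the denominator we use the Riemann--von Mangoldt formula for primitive $\chi$:
\[
N(T,\chi)=\frac{T}{2\pi}\log\frac{qT}{2\pi e}+O(\log qT),
\]
uniformly in $q$; under GRH the error term is even $O(\log qT/\log\log qT)$. Since $q\le T^{A}$, we have $N(T,\chi)\le(1+\eta)\frac{T}{2\pi}\log qT$ for any fixed $\eta>0$ once $T$ is large in terms of $A$ and $\eta$. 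Therefore
\[
N(T,\chi)^{k-1}\le(1+\eta)^{k-1}(2\pi)^{-(k-1)}T^{k-1}(\log qT)^{k-1}.
\]
Combining the two bounds, the powers of $2\pi$ collapse to $(2\pi)^{-k}(2\pi)^{k-1}=1/(2\pi)$ and the powers of $T$ leave a single $T$. This gives exactly
\[
\frac{A_q^k}{2\pi}\Bigl(\frac{\beta}{1+\beta}\Bigr)^k\frac{T}{(\log qT)^{k-1}}
\]
up to the factor $(1-\varepsilon/2)(1+\eta)^{-(k-1)}$, which is at least $1-\varepsilon$ for $\eta$ chosen small in terms of $k$ and $\varepsilon$. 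The second displayed form of the bound follows from the identity $\beta/(1+\beta)=1/(2+\log q/\log T)$ already noted after Theorem~\ref{thm:MN}. The special case $\log q=o(\log T)$ follows since then $\beta\to1$ and $\log qT\sim\log T$, and these $(1+o(1))$ factors are absorbed into $\varepsilon$.

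The only step needing care is the uniformity of the zero count in $q$. The standard formula (Davenport, Ch.~16) has an error $O(\log qT)$ with an absolute implied constant, valid for $T\ge2$. That error is negligible against $T\log qT$ uniformly in $q\le T^{A}$; in fact it stays negligible for any $q$, so no upper restriction on $q$ enters at this step. Everything else is bookkeeping of the $\varepsilon$'s.
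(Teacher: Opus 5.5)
Your proposal is correct and follows the paper's proof essentially verbatim: H\"older with exponents $k$ and $k/(k-1)$, the Riemann--von Mangoldt count $N(T,\chi)=\frac{T}{2\pi}\log qT\,(1+o(1))$ uniformly for $q\le T^{A}$ in the denominator, and Theorem~\ref{thm:MN} in the numerator. Your explicit $\varepsilon'$ and $\eta$ bookkeeping is just a more careful version of the paper's step of absorbing $(1-\varepsilon)^{k}(1+o(1))$ into a single factor $1-\varepsilon$.
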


\begin{remark}
In the case of the Riemann zeta-function ($q=1$), the negative discrete moments $\sum_{0<\gamma\le T}|\zeta'(\rho)|^{-2k}$ have been studied directly, and far more precisely than the H\"older bound of Corollary~\ref{cor:MN-higher} permits. Heap, Li and Zhao \cite{HLZ} proved lower bounds for all rational $k\ge0$, of the order predicted by the conjecture of Gonek and Hejhal, and Gao and Zhao \cite{GaoZhao} removed the rationality restriction, obtaining the same for all real $k\ge0$, with bounds expected to be sharp for $k<\tfrac32$; in the other direction Bui, Florea and Milinovich \cite{BFM} obtained upper bounds, almost optimal for $k\le\tfrac12$ and averaged over a subfamily of the zeros of asymptotically full density. The content of Corollary~\ref{cor:MN-higher} is thus its uniformity in the conductor rather than its strength.
\end{remark}

\begin{corollary}\label{cor:Ng-higher}
Fix $0<\delta<1$, a real number $k\ge1$, and $\varepsilon>0$. Assume the
Generalised Riemann Hypothesis for $L(s,\chi)$ and that all of its non-trivial
zeros are simple. Then, uniformly for all primitive characters $\chi$ modulo $q$
with $3\le q\le T^{1-\delta}$ and all $T$ sufficiently large in terms of $\delta$,
$k$ and $\varepsilon$,
\begin{align*}
      \sum_{0<\gamma\le T}\left|\frac{L(2\rho,\chi^2)}{L'(\rho,\chi)}\right|^{2k}
  \;&\ge\;(1-\varepsilon)\,\frac{\varphi(q)^{k}}{q^{k}}\,\frac{1}{2\pi}
  \Bigl(\frac{\beta}{1+\beta}\Bigr)^{k}\frac{T}{(\log qT)^{\,k-1}} \\
  &=\;(1-\varepsilon)\,\frac{\varphi(q)^{k}}{q^{k}}\,
  \frac{1}{2\pi\bigl(2+\frac{\log q}{\log T}\bigr)^{k}}\,\frac{T}{(\log qT)^{\,k-1}},
\end{align*}
with $\beta$ as in \eqref{eq:beta} and $\varphi$ the Euler totient function. In
particular, if $\log q=o(\log T)$ then $\beta\to1$ and $\log qT\sim\log T$, giving
\[
  \sum_{0<\gamma\le T}\left|\frac{L(2\rho,\chi^2)}{L'(\rho,\chi)}\right|^{2k}
  \;\ge\;(1-\varepsilon)\,\frac{\varphi(q)^{k}}{2^{\,k+1}\pi\,q^{k}}\,
  \frac{T}{(\log T)^{\,k-1}} .
\]
\end{corollary}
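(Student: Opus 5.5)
Corollary~\ref{cor:Ng-higher} follows from Theorem~\ref{thm:Ng} by the same H\"older argument that gives Corollary~\ref{cor:MN-higher} from Theorem~\ref{thm:MN}. Write $N(T,\chi)=\#\{0<\gamma\le T\}$ for the number of zeros of $L(s,\chi)$ counted, and set
\[
a_\gamma=\left|\frac{L(2\rho,\chi^2)}{L'(\rho,\chi)}\right|^2\ge0 .
\]
For real $k\ge1$, H\"older's inequality with exponents $k$ and $k/(k-1)$ (for $k=1$ there is nothing to prove) gives
\[
\sum_{0<\gamma\le T}a_\gamma\le\Bigl(\sum_{0<\gamma\le T}a_\gamma^{k}\Bigr)^{1/k}N(T,\chi)^{1-1/k},
\]
and therefore
\[
\sum_{0<\gamma\le T}a_\gamma^{k}\ge\frac{\bigl(\sum_{0<\gamma\le T}a_\gamma\bigr)^{k}}{N(T,\chi)^{k-1}} .
\]

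For the numerator we use Theorem~\ref{thm:Ng}, applied with $\varepsilon'$ in place of $\varepsilon$. This is valid uniformly for $3\le q\le T^{1-\delta}$ and gives
\[
\sum_{0<\gamma\le T}a_\gamma\ge(1-\varepsilon')\frac{\varphi(q)}{q}\frac{1}{2\pi}\frac{\beta}{1+\beta}T .
\]

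For the denominator we need an upper bound for $N(T,\chi)$ that is uniform in $q$. The Riemann--von Mangoldt formula for Dirichlet $L$-functions gives
\[
N(T,\chi)=\frac{T}{2\pi}\log\frac{qT}{2\pi e}+O(\log qT),
\]
with an absolute implied constant. It follows that $N(T,\chi)\le(1+\varepsilon')\frac{T}{2\pi}\log qT$ for $T$ large, uniformly for $q\le T$ (indeed for any $q\le T^{A}$).

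Substituting both bounds, the powers of $T$ and of $2\pi$ combine as $T^{k}(2\pi)^{-k}\big/\bigl(T^{k-1}(2\pi)^{-(k-1)}\bigr)=T/(2\pi)$. This leaves
\[
\sum_{0<\gamma\le T}a_\gamma^{k}\ge\frac{(1-\varepsilon')^{k}}{(1+\varepsilon')^{k-1}}\frac{\varphi(q)^k}{q^k}\frac{1}{2\pi}\Bigl(\frac{\beta}{1+\beta}\Bigr)^{k}\frac{T}{(\log qT)^{k-1}} .
\]
We choose $\varepsilon'$ small in terms of $k$ and $\varepsilon$ so that the prefactor is at least $1-\varepsilon$. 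The second expression in the statement follows from the identity $\beta/(1+\beta)=1/\bigl(2+\frac{\log q}{\log T}\bigr)$.

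For the special case, if $\log q=o(\log T)$ then $\beta\to1$, so $\beta/(1+\beta)\to\frac12$, and $\log qT\sim\log T$. Absorbing these limits into $\varepsilon$ gives the constant $\frac{\varphi(q)^k}{q^k}\cdot\frac{1}{2^{k+1}\pi}$.

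The only point needing care is that the zero-counting bound is uniform in $q$ with an explicit leading constant, since this constant enters the final bound. The standard explicit formula provides this, with an error $O(\log qT)$ that is negligible against the main term $T\log qT$.
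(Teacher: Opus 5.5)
Your proposal is correct and is essentially the paper's proof: H\"older's inequality with exponents $k$ and $k/(k-1)$ against the zero count $N(T,\chi)=\frac{T}{2\pi}\log qT\,(1+o(1))$, with Theorem~\ref{thm:Ng} supplying the lower bound for the second moment in the numerator. Your explicit bookkeeping with $\varepsilon'$, and your remark that the upper bound for $N(T,\chi)$ is uniform in $q$, simply make precise the paper's absorption of $(1-\varepsilon)^{k}(1+o(1))$ into a single factor $1-\varepsilon$.
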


\begin{remark}
A refinement of these methods, developed in a follow-up paper, narrows the deficit towards the conjectured size in both the reciprocal and ratio moments.
\end{remark}

\begin{remark}
The corresponding family bounds follow in an analogous way. Since the single-character bounds above depend on $\chi$ only through the modulus $q$, averaging over the $\varphi^*(q)$ primitive characters modulo $q$ shows that the family averages
\[
\frac{1}{\varphi^*(q)}\sideset{}{^*}\sum_{\chi\bmod q}\;\sum_{0<\gamma_\chi\le T}\frac{1}{|L'(\rho,\chi)|^{2k}}
\qquad\text{and}\qquad
\frac{1}{\varphi^*(q)}\sideset{}{^*}\sum_{\chi\bmod q}\;\sum_{0<\gamma_\chi\le T}\left|\frac{L(2\rho,\chi^2)}{L'(\rho,\chi)}\right|^{2k}
\]
obey the same lower bounds as the single-character moments of Corollaries~\ref{cor:MN-higher} and~\ref{cor:Ng-higher}, in the ranges $3\le q\le T^{A}$ and $3\le q\le T^{1-\delta}$ respectively.
\end{remark}

The paper is organised as follows. Section~\ref{sect:setup} sets up the two mollifiers and reduces Theorems~\ref{thm:MN} and~\ref{thm:Ng} to four propositions, Propositions~\ref{prop:M1MN} to~\ref{prop:M2Ng}, which evaluate the mollified first and second moments of each problem with conductor-uniform error terms. Section~\ref{sect:lemmas} collects the preliminary lemmas, each stated uniformly in $q$. Sections~\ref{sect:propsMN} and~\ref{sect:propsNg} prove the propositions for the reciprocal and the ratio moments respectively, carrying the conductor through every estimate. Section~\ref{sect:opt} combines the propositions, optimises over the mollifier, proves the theorems and corollaries, and explains how Conjectures~\ref{conj:dirMN} and~\ref{conj:dirNg} arise from the same computation. Section~\ref{sect:family} discusses the averages over the family of primitive characters modulo $q$ and the reasoning behind Conjectures~\ref{conj:famA} and~\ref{conj:famB}. Finally, Section~\ref{sec:higher-proofs} discusses the proofs of the higher moments stated above, namely Corollaries~\ref{cor:MN-higher} and~\ref{cor:Ng-higher} together with their family forms.

\section{Setup of the theorems}\label{sect:setup}

We follow the strategy of Milinovich and Ng \cite{MilinovichNg}, who in turn based their argument on the work of Rudnick and Soundararajan \cite{RudnickSound}. Throughout the remainder of the paper $\chi$ is a primitive character modulo $q\ge3$, $\bar\chi$ is its complex conjugate, and we assume GRH for $L(s,\chi)$ together with the simplicity of its zeros. The parameters $A>0$ (for Theorem~\ref{thm:MN}), $\delta\in(0,1)$ (for Theorem~\ref{thm:Ng}), $\vartheta\in(0,1)$ and the polynomial $P$ below are fixed. All implied constants are absolute or depend only on these fixed parameters, never on $q$ or $\chi$, and all estimates are uniform over $3\le q\le T^{A}$ (respectively $3\le q\le T^{1-\delta}$ for the ratio moment, where indicated).

Let $\xi=T^{\vartheta}$, and let $P(x)$ be a polynomial satisfying $P(0)=0$ and $P(1)=1$. For brevity we write
\[
P_n=P\!\left(\frac{\log(\xi/n)}{\log\xi}\right).
\]
The two theorems are attacked with two mollifiers, one for each. For Theorem~\ref{thm:MN} we use the M\"obius-twisted Dirichlet polynomial
\[
\mathcal{L}_{\psi}(s)=\sum_{n\le\xi}\frac{\mu(n)\psi(n)}{n^{s}}\,P_n
\qquad(\psi\in\{\chi,\bar\chi\}),
\]
which is a smoothed truncation of the Dirichlet series \eqref{eq:mobius-id}. For Theorem~\ref{thm:Ng} we use the Liouville-twisted polynomial
\[
\widetilde{\mathcal{L}}_{\psi}(s)=\sum_{n\le\xi}\frac{\lambda(n)\psi(n)}{n^{s}}\,P_n
\qquad(\psi\in\{\chi,\bar\chi\}),
\]
which by \eqref{eq:liouville-id} is a smoothed truncation of $L(2s,\chi^2)/L(s,\chi)$. In each case the residue of the underlying Dirichlet series at a simple zero $\rho$ of $L(s,\chi)$ is precisely the summand of the corresponding moment: $1/L'(\rho,\chi)$ for the reciprocal moment and $L(2\rho,\chi^2)/L'(\rho,\chi)$ for the ratio moment. This is what motivates the choice of mollifier in each case.

Since $\overline{L(s,\chi)}=L(\bar s,\bar\chi)$, and since under GRH every non-trivial zero satisfies $\bar\rho=1-\rho$, both mollifiers enjoy the symmetry
\begin{equation}\label{eq:symm}
\overline{\mathcal{L}_{\chi}(\rho)}=\mathcal{L}_{\bar\chi}(1-\rho),
\qquad
\overline{\widetilde{\mathcal{L}}_{\chi}(\rho)}=\widetilde{\mathcal{L}}_{\bar\chi}(1-\rho).
\end{equation}
Writing the summand of the reciprocal moment as $1/L'(\rho,\chi)$ and applying the Cauchy--Schwarz inequality to $\sum_{0<\gamma\le T}\frac{1}{L'(\rho,\chi)}\overline{\mathcal{L}_\chi(\rho)}$, the symmetry \eqref{eq:symm} gives
\begin{equation}\label{eq:CS-MN}
\sum_{0<\gamma\le T}\frac{1}{|L'(\rho,\chi)|^2}\ge\frac{|M_1|^2}{M_2},
\end{equation}
where
\[
M_1=\sum_{0<\gamma\le T}\frac{\mathcal{L}_{\bar\chi}(1-\rho)}{L'(\rho,\chi)}
\qquad\text{and}\qquad
M_2=\sum_{0<\gamma\le T}|\mathcal{L}_{\chi}(\rho)|^2 .
\]
Likewise, applying the Cauchy--Schwarz inequality to $\sum_{0<\gamma\le T}\frac{L(2\rho,\chi^2)}{L'(\rho,\chi)}\overline{\widetilde{\mathcal{L}}_\chi(\rho)}$ and using the second relation in \eqref{eq:symm}, we obtain for the ratio moment
\begin{equation}\label{eq:CS-Ng}
\sum_{0<\gamma\le T}\left|\frac{L(2\rho,\chi^2)}{L'(\rho,\chi)}\right|^2\ge\frac{|\widetilde M_1|^2}{\widetilde M_2},
\end{equation}
where
\[
\widetilde M_1=\sum_{0<\gamma\le T}\frac{L(2\rho,\chi^2)}{L'(\rho,\chi)}\widetilde{\mathcal{L}}_{\bar\chi}(1-\rho)
\qquad\text{and}\qquad
\widetilde M_2=\sum_{0<\gamma\le T}|\widetilde{\mathcal{L}}_{\chi}(\rho)|^2 .
\]
Our task is therefore to obtain an asymptotic for each of $M_1,M_2,\widetilde M_1,\widetilde M_2$, uniformly in $q$, and then to choose $P$ so as to make the resulting lower bounds as large as possible.

One technical convention is needed before we begin. The contour rectangles of Sections~\ref{sect:propsMN} and~\ref{sect:propsNg} have their bottom edge at a height $t_0=t_0(\chi)\in[1,2]$, chosen in Lemma~\ref{lem:heights} so as to avoid the low-lying zeros uniformly in $q$, and the residue theorem therefore produces sums over the zeros with $t_0<\gamma\le T$ rather than $0<\gamma\le T$. This costs nothing, for two reasons. First, the Cauchy--Schwarz inequalities \eqref{eq:CS-MN} and \eqref{eq:CS-Ng} are valid over any set of zeros, so we are free to define $M_1,M_2,\widetilde M_1,\widetilde M_2$ by sums over $t_0<\gamma\le T$. Secondly, the summands of the two moments are non-negative, so a lower bound for the sum restricted to $t_0<\gamma\le T$ is also a lower bound for the full sum over $0<\gamma\le T$. We adopt this convention without further comment.

\begin{remark}
The two hypotheses play distinct roles. The simplicity of the zeros guarantees that the moments are well-defined ($L'(\rho,\chi)\ne0$) and that the poles of $1/L(s,\chi)$ and of $L(2s,\chi^2)/L(s,\chi)$ at the zeros are simple, with the stated residues, in the contour integrals of Sections~\ref{sect:propsMN} and~\ref{sect:propsNg}. GRH for $L(s,\chi)$ places all the zeros on the critical line, which is what yields the symmetry \eqref{eq:symm}; it is also used through the size estimates of Sections~\ref{sect:propsMN} and~\ref{sect:propsNg}. No hypothesis on $L(s,\chi^2)$ is required.
\end{remark}

The arithmetic of the two problems enters only through one coprimality sum: on the diagonal $m=n$ the character contribution is $\chi(n)\bar\chi(n)=|\chi(n)|^2=\mathbf 1_{(n,q)=1}$, and the relevant densities, with conductor-uniform secondary terms, are recorded in Lemma~\ref{lem:densities} below. The conductor itself enters the main terms in exactly one place: the zero-counting density and the additive functional equation carry the analytic-conductor logarithm $\log qT$ rather than $\log T$, and this survives precisely in the second-moment term $K$ of Sections~\ref{sect:propsMN} and~\ref{sect:propsNg}. We can now state the four asymptotic estimates. The first pair concerns the reciprocal moment.

\begin{proposition}\label{prop:M1MN}
Assume GRH for $L(s,\chi)$ and that its zeros are simple, and let $0<\vartheta<1$ and $A>0$ be fixed. Then there is a sequence $\mathcal T=\{\tau_m\}$ of admissible heights, with bounded gaps, such that, uniformly for $3\le q\le T^{A}$ and $T\in\mathcal T$,
\[
M_1=\frac{T}{2\pi}\,A_q\log T\;\vartheta\!\int_0^1 P(x)\,dx+O\bigl(T\log\log 3q\bigr).
\]
\end{proposition}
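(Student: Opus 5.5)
We follow Milinovich--Ng and evaluate $M_1$ by a contour integral.

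\emph{Contour representation.} Let $c=1+1/\log qT$, and let $\mathcal R$ be the positively oriented rectangle with vertices $c+it_0$, $c+iT$, $1-c+iT$, $1-c+it_0$, where $t_0=t_0(\chi)$ is the height of Lemma~\ref{lem:heights}. Take $T\in\mathcal T$. The admissible heights are chosen by the standard argument: in each unit interval there is a $T$ with $|\gamma-T|\gg1/\log qT$ for every zero $\gamma$, and then $|L(\sigma+iT,\chi)|^{-1}\ll\exp(C\log qT/\log\log qT)$ for $\tfrac12\le\sigma\le2$.

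Since $\mathcal L_{\bar\chi}(1-s)$ is entire and each zero is a simple pole of $1/L$, the residue theorem gives
\[
M_1=\frac{1}{2\pi i}\oint_{\mathcal R}\frac{\mathcal L_{\bar\chi}(1-s)}{L(s,\chi)}\,ds .
\]

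\emph{Horizontal edges and the right edge.} On the horizontal edges we combine three ingredients: the bound $|\mathcal L_{\bar\chi}(1-s)|\ll\xi^{1/2}\log\xi$ for $\Re s\ge 1-c$, the size bound for $1/L$ on the right half, and the functional equation on the left half, where $|L(s,\chi)|^{-1}\asymp(q|t|)^{\sigma-1/2}|L(1-s,\bar\chi)|^{-1}$. These show the horizontal edges contribute $O(\xi\, T^{o(1)})$, which is $o(T)$ because $\vartheta<1$.

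On the right edge $\Re s=c$ we expand $1/L(s,\chi)=\sum_m\mu(m)\chi(m)m^{-s}$ and $\mathcal L_{\bar\chi}(1-s)=\sum_{n\le\xi}\mu(n)\bar\chi(n)P_n n^{s-1}$, and integrate term by term.
\begin{itemize}
\item The diagonal $m=n$ gives
\[
\frac{T-t_0}{2\pi}\sum_{n\le\xi}\frac{\mu(n)^2|\chi(n)|^2P_n}{n}.
\]
\item By Lemma~\ref{lem:densities}, with $\sum_{n\le x,(n,q)=1}\mu^2(n)/n=A_q\log x+O(\log\log3q)$ and partial summation against $P$, this equals
\[
\frac{T}{2\pi}\,A_q\log\xi\int_0^1P(x)\,dx+O(T\log\log3q),
\]
which is the main term, since $\log\xi=\vartheta\log T$.
\item The off-diagonal terms carry the factor $\int e^{-it\log(m/n)}dt\ll1/|\log(m/n)|$. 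Since $m^{-c}$ is only barely convergent, we bound them by $\sum_{n\le\xi}\sum_m m^{-c}n^{c-1}/|\log(m/n)|\ll\xi^{1+\varepsilon}$, i.e.\ $O(\xi^{1+\varepsilon})$, which is $o(T)$.
\end{itemize}

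\emph{Left edge (main obstacle).} On $\Re s=1-c$ we substitute $s\mapsto1-\bar s$ and apply the functional equation $L(s,\chi)=\varepsilon(\chi)(q/\pi)^{1/2-s}\Gamma$-ratio$\cdot L(1-s,\bar\chi)$. This turns the integrand into
\[
\varepsilon(\chi)^{-1}\mathcal X(s)^{-1}\,\frac{\mathcal L_{\bar\chi}(s)}{L(s,\bar\chi)}
\]
on $\Re s=c$, where $\mathcal X(s)^{-1}=\mathcal X(1-s)$. Expanding $1/L(s,\bar\chi)$ gives sums of the shape
\[
\sum_{n\le\xi}\sum_m\frac{\mu(n)\mu(m)\bar\chi(mn)P_n}{(mn)^c}\int\mathcal X(1-c-it)(mn)^{-it}\,dt .
\]
We evaluate this by Gonek's stationary-phase lemma, uniform in $q$, in the form
\[
\int\Big(\frac{qt}{2\pi}\Big)^{c-1/2+it}e^{-it}(mn)^{-it}\,dt .
\]
The stationary point is at $t=2\pi mn/q$, so only $mn\le qT/2\pi$ contribute. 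The contribution is then $\varepsilon(\chi)^{-1}q^{-1/2}\sum_{mn\le qT/2\pi}\mu(m)\mu(n)\bar\chi(mn)e(mn/q)P_n+$ error. The extra additive twist $e(\cdot/q)$ combines with $\bar\chi$ into a Gauss sum, which cancels the root number up to a factor $\varepsilon(\chi)^{-1}\tau(\chi)/\sqrt q$ of modulus $1$, leaving $\sum_{n\le\xi}\mu(n)P_n\sum_{m\le qT/2\pi n}\mu(m)\chi\bar\chi(\cdot)$-type Möbius sums. Under GRH these are $\ll (qT/n)^{1/2+\varepsilon}$, and after the $q^{-1/2}$ normalisation the total is $\ll T^{1/2+\varepsilon}\xi^{1/2}=o(T)$.

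The delicate points are these:
\begin{itemize}
\item keeping Gonek's lemma uniform in $q$, since the error terms must be $O(q^{\varepsilon}T^{1/2}\xi^{1/2})$;
\item handling $m$ with $mn$ near $qT$;
\item the range $t\le q$, where the phase has no stationary point and we integrate by parts directly.
\end{itemize}
If the Gauss-sum step fails, the fallback is to shift the left edge to $\Re s=1/2-\eta$ and accept $O(T^{1-\eta'})$ losses, still $o(T\log\log3q)$. Collecting all edges gives the proposition.
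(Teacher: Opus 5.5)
You have the same overall strategy as the paper, and it works: the contour, the admissible heights, the right edge (your direct off-diagonal bound $O(\xi^{1+\varepsilon})$ is a fine substitute for Lemma~\ref{lem:MV}) and the horizontal edges all go through. One small slip: $|\mathcal L_{\bar\chi}(1-s)|\ll\xi^{1/2}\log\xi$ is false for $\Re s>\tfrac12$, since at $\Re s=c$ the mollifier can have size $\xi$. Your conclusion $O(\xi T^{o(1)})$ for the horizontal edges is still correct.

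The genuine error is the left edge. It is not an obstacle at all, and you have inverted the archimedean factor there. On $\Re s=1-c$ the functional equation gives $1/L(s,\chi)=X(s,\chi)^{-1}/L(1-s,\bar\chi)$, with $|X(1-c+it,\chi)|^{-1}\asymp(q|t|)^{1/2-c}$. This factor \emph{decays}. After your reflection to $\Re w=c$ it still has modulus $(q|t|)^{1/2-c}$, not $(q|t|)^{c-1/2}$ as in your integral $\int(qt/2\pi)^{c-1/2+it}e^{-it}(mn)^{-it}\,dt$. Your own horizontal-edge formula $|L(s,\chi)|^{-1}\asymp(q|t|)^{\sigma-1/2}|L(1-s,\bar\chi)|^{-1}$ already says this. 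On this edge $\Re(1-s)=c>1$, so $|\mathcal L_{\bar\chi}(c-it)|\le\zeta(c)$ and $|L(c-it,\bar\chi)|^{-1}\le\zeta(c)$. The left edge is therefore trivially
\[
\ll\int_1^T(qt)^{-1/2}\zeta(c)^2\,dt\ll q^{-1/2}T^{1/2}(\log qT)^2=o(T).
\]
This is exactly how the paper disposes of it.

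As written, your left-edge argument does not establish the bound, for two reasons.
\begin{itemize}
\item It estimates an integrand larger than the true one by a factor $(qt)^{2c-1}\asymp qt$.
\item Its key step needs hypotheses you do not have. After stationary phase you face sums such as $\sum_m\mu(m)\bar\chi(m)e(-mn/q)$, and the additive character does not simply merge with $\bar\chi$ into a single Gauss sum. Expanding $e(k/q)=\varphi(q)^{-1}\sum_{\psi\bmod q}\tau(\psi)\bar\psi(k)$ for $(k,q)=1$ produces M\"obius sums twisted by $\bar\psi\bar\chi$ for every character $\psi$ modulo $q$. For $\psi=\bar\chi$ this is the principal character, i.e.\ M\"obius sums over integers coprime to $q$. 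Bounding all of these by $(qT/n)^{1/2+\varepsilon}$ requires GRH for all those $L$-functions, including RH for $\zeta$, whereas only GRH for $L(s,\chi)$ is assumed.
\end{itemize}
Your fallback of moving the edge to $\Re s=\tfrac12-\eta$ only makes sense once the factor is oriented correctly, and then the edge at $\Re s=1-c$ is already negligible. Replace the whole left-edge section by the trivial estimate above and the proof is complete.
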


\begin{proposition}\label{prop:M2MN}
Assume GRH for $L(s,\chi)$ and that its zeros are simple, and let $0<\vartheta<1$ and $A>0$ be fixed. Then, uniformly for $3\le q\le T^{A}$ and $T$ in the sequence of Lemma~\ref{lem:heights},
\[
M_2=\frac{T}{2\pi}\,A_q\log T\left[\log qT\;\vartheta\!\int_0^1 P(x)^2\,dx+\log T\;\vartheta^2\Bigl(\int_0^1 P(x)\,dx\Bigr)^2\right]+O\bigl(T\log T\log\log 3qT\bigr).
\]
\end{proposition}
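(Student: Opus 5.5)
We will evaluate $M_2=\sum_{t_0<\gamma\le T}\mathcal L_\chi(\rho)\mathcal L_{\bar\chi}(1-\rho)$ by expanding both mollifiers. This gives
\[
M_2=\sum_{m,n\le\xi}\frac{\mu(m)\mu(n)\chi(m)\bar\chi(n)P_mP_n}{n}\sum_{t_0<\gamma\le T}\Bigl(\frac nm\Bigr)^{\rho}.
\]
The inner sum is handled by a Landau--Gonek type formula for $L(s,\chi)$, uniform in $q$, which we take from the lemmas of Section~\ref{sect:lemmas}. In that formula, write $N(T,\chi)=\frac{T}{2\pi}\log\frac{qT}{2\pi e}+O(\log qT)$ for the zero-counting function, with the error term valid under GRH.

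Concretely, the formula should read
\[
\sum_{t_0<\gamma\le T}x^{\rho}=-\frac{T}{2\pi}\bar\chi(x)\Lambda(x)\ \ \text{(for integer }x>1\text{, up to normalisation)}+\text{error}.
\]
For $x=1$ the sum is simply $N(T,\chi)$. The error term should be uniform, of the shape $O(x\log(2qxT)\log\log 3x+\log qT\min(T,x/\langle x\rangle))$. We apply it with $x=n/m$ or $m/n$, and we use the symmetry $\rho\mapsto1-\bar\rho$ to reduce to the case $n>m$ when $x<1$.

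This splits $M_2$ into three pieces. The first is the diagonal $m=n$:
\[
D=N(T,\chi)\sum_{n\le\xi}\frac{\mu(n)^2\mathbf 1_{(n,q)=1}}{n}P_n^2 .
\]
The density lemma, Lemma~\ref{lem:densities}, gives
\[
\sum_{n\le y,(n,q)=1}\frac{\mu^2(n)}{n}=A_q\log y+O(\log\log 3q).
\]
Partial summation then turns $D$ into $\frac{T}{2\pi}\log qT\cdot A_q\log\xi\int_0^1P(x)^2dx$, plus an error of size $O(T\log qT\log\log3q)$. This is the first bracketed term; note that it carries the conductor logarithm through $N(T,\chi)$. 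The error is acceptable because $\log qT\ll_A\log T$.

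The second piece is the off-diagonal with $n=mk$, $k$ a prime power. Only $k=p$ survives, since $\mu(mk)\ne0$ forces $k$ squarefree. This produces
\[
K=-\frac{T}{2\pi}\sum_{mp\le\xi}\frac{\mu(m)\mu(mp)|\chi(m)|^2|\chi(p)|^2\log p}{mp}\,P_mP_{mp}.
\]
Since $\mu(mp)=-\mu(m)$ when $p\nmid m$, this sum is positive. Its conjugate from the case $m=np$ contributes an equal amount. Now write $\sum_{p\le \xi/m,\,p\nmid qm}\frac{\log p}{p}P_{mp}$. By Mertens' theorem this equals $\log(\xi/m)\int_0^1 P(\cdot)\,$, with the appropriate rescaling, plus $O(\log\log 3qm)$. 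Combining this with the $\mu^2$ density over $m$, the two off-diagonal contributions together give $\frac{T}{2\pi}A_q(\log\xi)^2\bigl(\int_0^1P\bigr)^2$. This is the second term. One has to check that the factor $2$ from the two orderings against $\int_0^1 (\log(\xi/m)/\log\xi)P(\cdot)P'(\cdot)$ collapses to $(\int P)^2$; this is the routine Milinovich--Ng integration by parts, using $P(0)=0$.

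The third piece is what remains of the error terms. Summed over $m,n\le\xi$, they are $\ll \xi^{1+\epsilon}\log qT+T\log\log$ terms, provided $\vartheta<1$. Here we also need the $\min(T,x/\langle x\rangle)$ term: it is bounded by the standard $\sum_{m\ne n}$ near-diagonal estimate, which gives $O(\xi\log^3)$, and this is $o(T)$ for $\xi=T^\vartheta$.

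The main obstacle is the uniformity in $q$ of the Landau--Gonek error and of the secondary terms. In the off-diagonal prime sum, the condition $p\nmid q$ must cost only $O(\sum_{p\mid q}\log p/p)=O(\log\log3q)$. This restriction is exactly what produces the stated $O(T\log T\log\log3qT)$. We also need $T$ to lie in the admissible sequence of Lemma~\ref{lem:heights}, so that the horizontal contour edges in the Gonek formula are controlled uniformly, again using GRH bounds for $L'/L$ of size $\log qT$ on those heights.
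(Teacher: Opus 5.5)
Your route is genuinely different from the paper's, and it works once one missing ingredient is supplied.

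\textbf{How the two routes compare.} You expand $|\mathcal L_\chi(\rho)|^2$ as $\sum_{m,n\le\xi}$ and evaluate each zero-sum $\sum_\gamma (n/m)^\rho$ pointwise by a Landau--Gonek formula. The paper never isolates these zero-sums. It integrates $\frac{L'}{L}(s,\chi)\mathcal L_\chi(s)\mathcal L_{\bar\chi}(1-s)$ around the rectangle and reflects the left edge with the logarithmic-derivative functional equation of Lemma~\ref{lem:Xfactor}. It then evaluates the two resulting right-edge integrals with Lemma~\ref{lem:MV}.

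The pieces match exactly:
\begin{itemize}
\item the paper's $K$, with weight $\log\frac{qt}{2\pi}$, is your diagonal $N(T,\chi)\sum_{n\le\xi,(n,q)=1}\mu(n)^2P_n^2/n$;
\item its $2\Re J_1$, where $\Lambda(k)$ meets $\mu(l)\mu(kl)$, is your pair of orderings $n=mp$ and $m=np$.
\end{itemize}
The paper's organisation absorbs every off-diagonal term into the single mean-value error $O(\xi(\log T)^{3/2})$. There $q$ enters only through Lemmas~\ref{lem:Xfactor} and~\ref{lem:heights}, so no explicit formula is needed and there is no bookkeeping of rationals $n/m$ near prime powers. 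Your organisation makes the main terms transparent and would give a reusable pointwise formula of the kind Section~\ref{sect:family} calls for. The price is that you have to prove that formula.

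\textbf{The missing ingredient and the needed repairs.} There is no Landau--Gonek lemma in Section~\ref{sect:lemmas}. You must prove a conductor-uniform version yourself, by integrating $\frac{L'}{L}(s,\chi)x^s$ over the same rectangle at the heights of Lemma~\ref{lem:heights}. Doing so corrects your guessed statement in the following places.
\begin{itemize}
\item The main term is $-\frac{T}{2\pi}\chi(x)\Lambda(x)$, not $\bar\chi(x)\Lambda(x)$. It comes from $\sum_k\Lambda(k)\chi(k)k^{-c-it}x^{c+it}$ on the right edge. With $\bar\chi$, your off-diagonal would carry $\bar\chi(p)^2$ instead of $|\chi(p)|^2$ and would lose its main term for complex $\chi$. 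Your later use of $|\chi(p)|^2$ tacitly assumes the correct version.
\item The error must also contain $O\bigl(\log qT\min(T,1/\log x)\bigr)$. This comes from the $\log\frac{q|t|}{2\pi}$ term on the reflected left edge, the same term that produces $N(T,\chi)$ at $x=1$. It is not covered by your $x/\langle x\rangle$ term when $x$ is near $1$. With weight $1/n$ it sums to $\ll\log qT\sum_{m<n\le\xi}(n-m)^{-1}\ll\xi\log\xi\log qT$, which is harmless.
\item The horizontal edges give $O\bigl(x(\log qT)^2\bigr)$, since Lemma~\ref{lem:heights} gives $(\log qT)^2$, unconditionally, not $\log qT$ under GRH. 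This sums to $O\bigl(\xi\log\xi(\log qT)^2\bigr)=o(T)$.
\item The closing integral involves no $P'$ and does not use $P(0)=0$. It is simply $2\int_0^1P(a)\int_0^aP(u)\,du\,da=\bigl(\int_0^1P\bigr)^2$.
\end{itemize}
With these repairs your argument gives the stated main term and error.
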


The second pair concerns the ratio moment, and differs in the arithmetic factor and in the admissible range of $q$.

\begin{proposition}\label{prop:M1Ng}
Assume GRH for $L(s,\chi)$ and that its zeros are simple, and let $0<\vartheta<1$ and $0<\delta<1$ be fixed. Then there is a sequence $\mathcal T$ as above such that, uniformly for $3\le q\le T^{1-\delta}$ and $T\in\mathcal T$,
\[
\widetilde M_1=\frac{\varphi(q)}{q}\,\frac{T}{2\pi}\log T\;\vartheta\!\int_0^1 P(x)\,dx+O\bigl(T\log\log 3q\bigr).
\]
\end{proposition}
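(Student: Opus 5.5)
We follow the contour-integration approach used for the $\zeta$ case by Sinha and by Milinovich--Ng. We write $\widetilde M_1$ as a contour integral
\[
\widetilde M_1=\frac{1}{2\pi i}\oint_{\mathcal R}\frac{L(2s,\chi^2)}{L(s,\chi)}\,\widetilde{\mathcal L}_{\bar\chi}(1-s)\,ds ,
\]
where $\mathcal R$ is the positively oriented rectangle with vertices $c+it_0$, $c+iT$, $1-c+iT$, $1-c+it_0$, and $c=1+1/\log qT$. The heights $t_0$ and $T\in\mathcal T$ come from Lemma~\ref{lem:heights}. Under GRH and simplicity, the only poles inside $\mathcal R$ are the zeros $\rho$, each with residue $L(2\rho,\chi^2)\widetilde{\mathcal L}_{\bar\chi}(1-\rho)/L'(\rho,\chi)$. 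On the right edge we expand $L(2s,\chi^2)/L(s,\chi)$ by \eqref{eq:liouville-id} and $\widetilde{\mathcal L}_{\bar\chi}(1-s)=\sum_{n\le\xi}\lambda(n)\bar\chi(n)n^{s-1}P_n$. Integrating term by term, only the diagonal $m=n$ survives. The off-diagonal terms contribute $\ll\sum_{m\ne n}|m/n|^{\ldots}/|\log(m/n)|$, which is $O(\xi^{1+\varepsilon})=O(T^{\vartheta+\varepsilon})$ since $\vartheta<1$. The diagonal equals
\[
\frac{T}{2\pi}\sum_{n\le\xi}\frac{\lambda(n)^2|\chi(n)|^2}{n}P_n=\frac{T}{2\pi}\sum_{\substack{n\le\xi\\(n,q)=1}}\frac{P_n}{n}.
\]
By the coprimality density of Lemma~\ref{lem:densities}, this is $\frac{T}{2\pi}\frac{\varphi(q)}{q}\log\xi\int_0^1P(x)\,dx+O(T\log\log3q)$. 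The error arises from $\sum_{p\mid q}\log p/p\ll\log\log 3q$, together with $P(0)=0$, which kills boundary terms. Since $\log\xi=\vartheta\log T$, this is the stated main term.

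The horizontal edges are handled using the choice of heights. At $t=T$ and $t=t_0$, Lemma~\ref{lem:heights} gives $|L(\sigma+it,\chi)|^{-1}\ll\exp(C\log qT/\log\log qT)$ for $\frac12\le\sigma\le c$. It also gives the trivial bound $|\widetilde{\mathcal L}_{\bar\chi}(1-s)|\ll\xi^{\max(0,\sigma)}\log\xi$, while $L(2s,\chi^2)$ is bounded on $\Re(2s)\ge1-\ldots$. On the left half ($\sigma<1/2$) we use the functional equation for $1/L(s,\chi)$. Together these give a horizontal contribution of size $T^{\vartheta+o(1)}$, which is acceptable because $q\le T^{1-\delta}$ keeps $\exp(C\log qT/\log\log qT)=T^{o(1)}$.

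The left edge $\Re s=1-c$ is the main obstacle. There we apply the functional equation $L(s,\chi)=\varepsilon(\chi)\,\mathcal X(s,\chi)L(1-s,\bar\chi)$, with $|\mathcal X(1-c+it,\chi)|\asymp (q|t|/2\pi)^{c-1/2}$. This turns the integrand into
\[
\mathcal X(s,\chi)^{-1}\,\frac{L(2s,\chi^2)}{L(1-s,\bar\chi)}\,\widetilde{\mathcal L}_{\bar\chi}(1-s).
\]
Now $1/L(1-s,\bar\chi)$ and $\widetilde{\mathcal L}_{\bar\chi}(1-s)$ are absolutely convergent Dirichlet series at $\Re(1-s)=c$, but $L(2s,\chi^2)$ sits at $\Re(2s)=2-2c\approx 1-\ldots<1$. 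Because we do not assume GRH for $\chi^2$, we cannot expand $L(2s,\chi^2)$ cleanly. We plan to handle it in one of two ways. The first option is to shift the left edge to $\Re s=\frac12-\eta$ for small $\eta$, use the convexity bound $L(1-2\eta+2it,\chi^2)\ll (q|t|)^{\eta+\varepsilon}$, and estimate the remaining integral by a stationary-phase/first-moment bound (Gonek's lemma) for $\int \mathcal X^{-1}(s)\,n^{-s}\,dt$. That lemma produces terms $\sum_{n\le qT/2\pi}a_n e(\ldots)$ which only appear when $n\asymp qT$. The second option is to expand $L(2s,\chi^2)$ as a Dirichlet series only after moving it to $\Re(2s)>1$. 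In either case, the left edge contributes $\ll T^{1/2+\vartheta/2+\eta}(qT)^{\eta+\varepsilon}$ plus the Gonek-lemma terms, which are $O(T)$ with no $\log T$. This needs $q^{\eta}$ to be absorbed, which is exactly where $q\le T^{1-\delta}$ enters.

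Collecting the terms gives the stated asymptotic, uniformly for $3\le q\le T^{1-\delta}$ and $T\in\mathcal T$.
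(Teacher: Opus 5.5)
\paragraph{The left edge.} The genuine gap is on the left edge, and it begins with a slip. On $\Re s=1-c$ one has $\Re(2s)=2-2c=-2/\log qT\approx 0$, not $\approx 1$. So $L(2s,\chi^2)$ sits at the \emph{left} end of its critical strip. By its functional equation it grows like $(q^*|t|)^{1/2}$ there, up to logarithms and the imprimitive Euler factors. This cancels the $t$-decay $|\mathcal X(s,\chi)|^{-1}\asymp(q|t|)^{-1/2}$. The trivial bound for the edge is then of order $T(\log T)^3$, times $T^{o(1)}$, which exceeds the main term. The difficulty is therefore not a mild $q^{\eta}$ loss near $\Re w=1$; all the decay is lost and must be recovered from cancellation in $t$.

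Your first option does not supply that cancellation. After moving to $\Re s=\tfrac12-\eta$ and bounding $L(2s,\chi^2)$ pointwise by convexity, you have discarded the oscillation you need. Gonek's lemma evaluates $\int\mathcal X(s,\chi)^{-1}n^{-s}\,dt$ one frequency at a time, and it requires the rest of the integrand to be an absolutely convergent Dirichlet series. Here $L(2s,\chi^2)$ has been replaced by its modulus, and $1/L(1-s,\bar\chi)$ at $\Re(1-s)=\tfrac12+\eta$ is not absolutely convergent. Without cancellation that edge is only $\ll T\,\xi^{1/2-\eta}(qT)^{\varepsilon}$. Neither your bound $T^{1/2+\vartheta/2+\eta}(qT)^{\eta+\varepsilon}$ nor the ``Gonek-lemma terms $O(T)$'' is actually derived.

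\paragraph{What the left edge needs.} Your second option points the right way only if ``moving $L(2s,\chi^2)$ to $\Re(2s)>1$'' means applying its functional equation. Read as a contour shift to $\Re s>\tfrac12$, it would exclude the zeros. Made precise, this is the paper's argument:
\begin{enumerate}
\item Keep the edge at $\Re s=1-c$. Write $L(2s,\chi^2)=E(2s)L(2s,\chi^*)$, with $\chi^*$ the primitive character inducing $\chi^2$ and $E$ a finite Euler product.
\item Reflect \emph{both} $L(2s,\chi^*)$ and $L(s,\chi)$. Then $L(1-2s,\bar\chi^*)$, $1/L(1-s,\bar\chi)$, $\widetilde{\mathcal L}_{\bar\chi}(1-s)$ and $E(2s)$ expand absolutely into frequencies $r^{it}$, $r=a^2bm/d^2$, with total coefficient mass $\ll T^{o(1)}(\log T)^3$.
\item The archimedean factor is now $G(t)=X(2s,\chi^*)/X(s,\chi)$, not $\mathcal X(s,\chi)^{-1}$. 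By Lemma~\ref{lem:Xfactor}, $|G|\ll1$ with bounded variation.
\item The phase $\arg G(t)+t\log r$ has second derivative $-1/t+O(t^{-2})$, independent of $r$ and $q$. Lemma~\ref{lem:vdc} then gives $\int_1^T G(t)r^{it}\,dt\ll\sqrt T$ uniformly in $r$.
\end{enumerate}
Hence the left edge is $T^{1/2+o(1)}$, uniformly for $q\le T^{A}$. The double reflection, together with the fact that the curvature of $\arg G$ does not depend on the frequency, is the missing idea.

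\paragraph{Smaller points.} On the horizontal edges, $L(2s,\chi^2)$ is not bounded for $\sigma<\tfrac12$. You need convexity there, $|L(2s,\chi^2)|\ll(qT)^{1/2-\sigma+o(1)}$. Combined with the decay $(qT)^{\sigma-1/2+o(1)}$ of $1/L(s,\chi)$ that you invoke, this does give your $\xi(qT)^{o(1)}$. Also, $\exp(C\log qT/\log\log qT)=T^{o(1)}$ needs only $q\le T^{A}$. In the paper, $q\le T^{1-\delta}$ enters at the horizontal edges, not the left edge: there $|1/L|$ is bounded only by $(qT)^{o(1)}$, so convexity leaves a term $(qT)^{1/2+o(1)}$.
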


\begin{proposition}\label{prop:M2Ng}
Assume GRH for $L(s,\chi)$ and that its zeros are simple, and let $0<\vartheta<1$ and $A>0$ be fixed. Then, uniformly for $3\le q\le T^{A}$ and $T$ in the sequence of Lemma~\ref{lem:heights},
\[
\widetilde M_2=\frac{\varphi(q)}{q}\,\frac{T}{2\pi}\log T\left[\log qT\;\vartheta\!\int_0^1 P(x)^2\,dx+\log T\;\vartheta^2\Bigl(\int_0^1 P(x)\,dx\Bigr)^2\right]+O\bigl(T\log T\log\log 3qT\bigr).
\]
\end{proposition}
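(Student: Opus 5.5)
We would expand the square and sum over the zeros directly, with no contour argument specific to this proposition. Under GRH, $\overline{\widetilde{\mathcal L}_\chi(\rho)}=\widetilde{\mathcal L}_{\bar\chi}(1-\rho)$ by \eqref{eq:symm}, so
\[
\widetilde M_2=\sum_{m,n\le\xi}\frac{\lambda(m)\lambda(n)\chi(m)\bar\chi(n)P_mP_n}{n}\sum_{t_0<\gamma\le T}\Bigl(\frac{n}{m}\Bigr)^{\rho}.
\]
We split this into the diagonal $m=n$ and the off-diagonal $m\ne n$. The inner sum is handled by a Landau--Gonek formula for the zeros of $L(s,\chi)$, uniform in $q$, which we expect to be among the lemmas of Section~\ref{sect:lemmas}. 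For $x>1$ we would use it in the form
\[
\sum_{t_0<\gamma\le T}x^{\rho}=-\frac{T}{2\pi}\,\bar\chi(x)\Lambda(x)+O\bigl(x\log(2xqT)\log\log 3x\bigr)+\text{(terms uniformly small in }T),
\]
with $\Lambda(x)=0$ for non-integral $x$. The case $x<1$ follows by conjugation.

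\textbf{Diagonal.} The diagonal contributes $N_\chi(T)\sum_{n\le\xi,(n,q)=1}P_n^2/n$. Here
\[
N_\chi(T)=\frac{T}{2\pi}\log\frac{qT}{2\pi e}+O(\log qT).
\]
By partial summation with the coprime density from Lemma~\ref{lem:densities}, namely $\sum_{n\le y,(n,q)=1}1/n=\frac{\varphi(q)}{q}(\log y+O(\sum_{p\mid q}\log p/p))$, the diagonal sum equals $\frac{\varphi(q)}{q}\log\xi\int_0^1P^2+O(\log\log 3q)$. This gives the first term $\frac{\varphi(q)}{q}\frac{T}{2\pi}\log T\,\log qT\,\vartheta\int P^2$, with error $O(T\log qT\log\log 3q)$. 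The factor $\log qT$ appears only here.

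\textbf{Off-diagonal.} Only the pairs with $m\mid n$ or $n\mid m$ and quotient a prime power survive in the main term. Take $n=mk$ (the other case is symmetric and doubles the count). Then $\lambda(mk)\lambda(m)=\lambda(k)$ and the characters collapse to $|\chi(m)|^2|\chi(k)|^2$. For $k=p$ this gives $\lambda(p)\Lambda(p)=-\log p$, and the sign cancels the minus sign in the Landau--Gonek main term. The contribution is therefore
\[
2\cdot\frac{T}{2\pi}\sum_{\substack{m\le\xi\\(m,q)=1}}\frac{P_m}{m}\sum_{\substack{p\le\xi/m\\ p\nmid q}}\frac{\log p}{p}P_{mp}.
\]
By Mertens' theorem restricted to $p\nmid q$ (cost $O(\log\log 3q)$), the inner sum is $\log\xi\int_0^{y}P(v)\,dv$ with $y=\log(\xi/m)/\log\xi$. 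The outer sum then gives $\frac{\varphi(q)}{q}\log^2\xi\int_0^1P(y)\int_0^yP=\frac{\varphi(q)}{q}\log^2\xi\,\frac12(\int P)^2$. Doubling yields exactly the second term $\frac{\varphi(q)}{q}\frac{T}{2\pi}\vartheta^2\log^2T(\int P)^2$. Prime powers $k=p^j$ with $j\ge2$ converge absolutely and contribute $O(T\log T)$.

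\textbf{Errors and main obstacle.} The Landau--Gonek error terms, summed over all $m,n\le\xi$, are
\[
\ll\sum_{m,n\le\xi}\frac{1}{n}\cdot\frac{\max(m,n)}{\min(m,n)}\log(\xi qT)\log\log\xi\ll\xi^{1+\varepsilon}\log qT.
\]
This is $o(T)$ because $\vartheta<1$. No hypothesis on $\chi^2$ is needed, since $L(2s,\chi^2)$ never appears in $\widetilde M_2$; this is why the full range $q\le T^A$ is available. The main obstacle is keeping the Landau--Gonek formula uniform in $q$ up to $T^A$. In particular, the lower-order terms (from $x$ near a prime power and from the choice of $t_0$ in Lemma~\ref{lem:heights}) must be shown to cost only $O(\log qT\log\log)$ per pair rather than a power of $q$. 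We would first try to derive this from the explicit formula for $L'/L(s,\chi)$ with the conditional bound $\log|L|\ll\log qT/\log\log qT$ on the horizontal edges.
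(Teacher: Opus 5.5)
Your route is genuinely different from the paper's. The paper never expands $|\widetilde{\mathcal L}_\chi(\rho)|^2$ into frequencies $(n/m)^\rho$. Instead it integrates $\frac{L'}{L}(s,\chi)\widetilde{\mathcal L}_\chi(s)\widetilde{\mathcal L}_{\bar\chi}(1-s)$ round the rectangle, bounds the horizontal edges trivially at the heights of Lemma~\ref{lem:heights}, and reflects the left edge by the logarithmic-derivative functional equation. This gives $\widetilde M_2=\overline{\widetilde K}+2\Re\widetilde J_1+O(\xi(\log qT)^2)$, and both pieces are evaluated by Lemma~\ref{lem:MV} alone.

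Your two main terms are exactly these. $\widetilde K$, with its weight $\log\frac{qt}{2\pi}$, is your $N_\chi(T)\sum_{(n,q)=1}P_n^2/n$. The diagonal of $\widetilde J_1$, with coefficients $\widetilde\alpha_n=\sum_{kl=n}\Lambda(k)\lambda(l)P_l$, is your prime-power off-diagonal, and the factor $2\Re$ is your two cases $m\mid n$, $n\mid m$.

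What the paper's route buys is that all non-resonant frequencies are discarded at once by the Montgomery--Vaughan error $\ll\xi(\log T)^{3/2}$, so no pointwise formula for $\sum_\gamma x^\rho$ is needed. What yours buys is a transparent diagonal/off-diagonal picture, and it is the tool one would need for the family averages of Section~\ref{sect:family}. However, a $q$-uniform Landau--Gonek formula is not among the paper's lemmas, so you must prove it. The proof is the same contour argument with $x^s$ in place of the mollifier product, i.e.\ the paper's computation done one frequency at a time. Note also that its main term is $-\frac{T}{2\pi}\chi(x)\Lambda(x)$, not $\bar\chi(x)\Lambda(x)$. The collapse to $|\chi(m)|^2|\chi(k)|^2$ that you use needs $\chi(k)$, and $\bar\chi$ appears only for $x<1$ after conjugation.

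Your error bookkeeping, as written, does not close. There are two problems.

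First, the displayed sum $\sum_{m,n\le\xi}\frac1n\frac{\max(m,n)}{\min(m,n)}$ is of order $\xi^2$, since the terms with $m>n$ give $\sum_n n^{-2}\sum_{m\le\xi}m$. For $x=n/m<1$ you should use $\sum_\gamma x^\rho=x\,\overline{\sum_\gamma x^{-\rho}}$ (valid under GRH). This makes the error $O(\log)$ rather than $O(x^{-1}\log)$. Each pair then costs $\ll\min(m,n)^{-1}\log(\xi qT)\log\log\xi$, and the total is $\ll\xi\log\xi\log(qT)\log\log\xi=o(T)$.

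Second, the near-prime-power term $O\bigl(\log x\min(T,x/\langle x\rangle)\bigr)$ cannot be absorbed pair by pair. For $n=mp^k\pm1$ one has $\langle n/m\rangle=1/m$, so that single pair costs $\asymp\log p^k$ after the weight $1/n$. The only uniform bound, $\langle n/m\rangle\ge1/m$, summed over all pairs gives a total of order $\xi^2$. That is $o(T)$ only for $\vartheta<\tfrac12$, which is fatal, since Section~\ref{sect:opt} needs $\vartheta\to1$.

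You need the aggregate count instead. Write $n=mp^k+j$ with $p^k$ the prime power nearest $n/m$; the pair then costs $\ll\log(\xi/m)/|j|$, and symmetrically for $n<m$. Summing over distinct $j$, over $p^k\ll\xi/m$, and over $m$ gives $\ll\xi(\log\xi)^2$. With the uniform lemma actually proved and these two repairs, your argument does give the proposition in the full range $q\le T^{A}$, with no hypothesis on $\chi^2$, as you say.
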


\begin{remark}
For fixed $q$ one has $\log qT=\log T+O(1)$ and $\log\log 3qT=O(1)$, and the propositions reduce to their classical shapes, with second moments $\tfrac{T(\log T)^2}{2\pi}\mathfrak a(q)\bigl[\vartheta\int P^2+\vartheta^2(\int P)^2\bigr]$. Since $\mathfrak a(q)\gg1/\log\log q$ by Mertens' theorem, the error terms above are smaller than the corresponding main terms by the relative factor $O\bigl((\log\log qT)^2/\log T\bigr)$, uniformly in the stated ranges; this is what makes the multiplicative form of Theorems~\ref{thm:MN} and~\ref{thm:Ng} possible.
\end{remark}

In Section~\ref{sect:opt} we substitute these estimates into \eqref{eq:CS-MN} and \eqref{eq:CS-Ng}, optimise over $P$, and prove the theorems and the corollaries.

\section{Preliminary lemmas}\label{sect:lemmas}

We collect here the analytic facts used in the proofs. In contrast to the fixed-conductor setting, every lemma below is stated with explicit uniformity in $q$; implied constants are absolute unless a dependence is displayed.

The central tool is the mean value theorem for Dirichlet polynomials of Montgomery and Vaughan \cite{MV}, in the bilinear form due to Tsang \cite{Tsang}.

\begin{lemma}\label{lem:MV}
Let $\{a_n\}_{n\ge1}$ and $\{b_n\}_{n\ge1}$ be sequences of complex numbers. Then for any real $T>0$,
\[
\int_0^T\Bigl(\sum_{n\ge1}a_n n^{-it}\Bigr)\Bigl(\sum_{n\ge1}b_n n^{it}\Bigr)\,dt
=T\sum_{n\ge1}a_n b_n+O\!\left(\Bigl(\sum_{n\ge1}n|a_n|^2\Bigr)^{1/2}\Bigl(\sum_{n\ge1}n|b_n|^2\Bigr)^{1/2}\right),
\]
with an absolute implied constant.
\end{lemma}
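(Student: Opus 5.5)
We treat Lemma~\ref{lem:MV} by reducing it to the Montgomery--Vaughan mean value theorem in its Hilbert-inequality form, via polarisation. Since the statement is symmetric in the two polynomials up to conjugation, write $B(t)=\sum_n b_n n^{it}=\overline{\sum_n \bar b_n n^{-it}}$. The integral then becomes $\int_0^T A(t)\overline{\widetilde B(t)}\,dt$, where $A(t)=\sum a_n n^{-it}$ and $\widetilde B(t)=\sum \bar b_n n^{-it}$. We may assume both square sums $\sum n|a_n|^2$ and $\sum n|b_n|^2$ are finite, since otherwise there is nothing to prove. Finite truncations plus a limiting argument then handle convergence, because these square sums control absolute convergence of the mean-square integrals.

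First expand the product and integrate termwise:
\[
\int_0^T A(t)\overline{\widetilde B(t)}\,dt=T\sum_n a_nb_n+\sum_{m\ne n}a_m b_n\,\frac{(n/m)^{iT}-1}{i\log(n/m)} .
\]
The diagonal gives exactly the main term $T\sum a_nb_n$. It remains to bound the off-diagonal bilinear form by $O\bigl((\sum n|a_n|^2)^{1/2}(\sum n|b_n|^2)^{1/2}\bigr)$.

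Split the off-diagonal term into the constant piece $-\sum_{m\ne n}a_mb_n/(i\log(n/m))$ and the oscillating piece $\sum_{m\ne n} a_m (m^{-iT}) b_n(n^{iT})/(i\log(n/m))$. The second piece has the same shape as the first once we absorb the unimodular twists into the coefficients, so one estimate suffices for both. For the bilinear form $\sum_{m\ne n} x_m y_n/\log(n/m)$ we invoke the generalised Hilbert inequality of Montgomery--Vaughan, taking $\lambda_n=\log n$ with spacing $\delta_n=\min_{m\ne n}|\log n-\log m|\gg 1/n$. This gives
\[
\Bigl|\sum_{m\ne n}\frac{x_m\bar y_n}{\lambda_m-\lambda_n}\Bigr|\le \frac{3\pi}{2}\Bigl(\sum_n|x_n|^2\delta_n^{-1}\Bigr)^{1/2}\Bigl(\sum_n|y_n|^2\delta_n^{-1}\Bigr)^{1/2}
\]
in its bilinear (Cauchy--Schwarz) form. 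Hilbert's inequality is usually stated as a quadratic form with a single sequence, so the bilinear form is extracted by polarisation: apply the inequality to $x\pm y$ and $x\pm iy$ and combine, which costs only a constant. With $\delta_n^{-1}\ll n$, this yields the claimed error bound with an absolute constant.

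The main obstacle is the bilinear Hilbert inequality with weights $\delta_n^{-1}$. Polarising naively from the quadratic version produces the bound $\sum(|x_n|^2+|y_n|^2)\delta_n^{-1}$ rather than the geometric mean of the two sums. We fix this by rescaling $x\mapsto cx$, $y\mapsto y/c$ and optimising over $c$, which converts the sum into the product of square roots as stated. This is exactly Tsang's observation, so in the paper we would simply cite \cite{MV,Tsang} for this step.
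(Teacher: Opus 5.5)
Your proof is correct. Expanding the product gives the diagonal main term; both off-diagonal pieces, after the unimodular twists are absorbed into the coefficients, are bounded by the Montgomery--Vaughan generalised Hilbert inequality with $\lambda_n=\log n$ and $\delta_n^{-1}\le n+1$, and polarisation followed by rescaling gives the geometric-mean form at the cost of an absolute constant. This is the standard argument behind the references \cite{MV,Tsang}, and the paper proves Lemma~\ref{lem:MV} only by citing them.
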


We remark at once that Lemma~\ref{lem:MV} is the source of the conductor-independent cap on the mollifier: its error term is of size $\xi$ times logarithms whatever the value of $q$ (the coefficients see $\chi$ only through $|\chi|\le1$), and must be dominated by main terms of size $T$ times logarithms; hence $\xi\le T^{1-\varepsilon}$, with no power of $q$ available on either side. Throughout we apply Lemma~\ref{lem:MV} with the $t$-integration over $[1,T]$ in place of $[0,T]$; the omitted $\int_0^1$ contributes $O(\xi\log T)=o(T)$ and is absorbed into the error terms.

Next we record the functional equation
\begin{equation}\label{eq:FE}
L(s,\chi)=X(s,\chi)L(1-s,\bar\chi)
\end{equation}
and the consequences of Stirling's formula for the factor $X$.

\begin{lemma}\label{lem:Xfactor}
For $s=\sigma+it$ with $\sigma$ in any fixed bounded interval and $|t|\ge1$, uniformly in $q$,
\[
|X(s,\chi)|\asymp\Bigl(\frac{q|t|}{2\pi}\Bigr)^{1/2-\sigma},
\qquad
\arg X(\sigma+it,\chi)=-t\log\frac{q|t|}{2\pi}+t+O(1),
\]
and the latter asymptotic may be differentiated termwise, giving $\frac{d}{dt}\arg X=-\log\frac{q|t|}{2\pi}+O(1/t)$ and $\frac{d^2}{dt^2}\arg X=-1/t+O(1/t^2)$; the conductor enters the phase only through its $t$-linear part. Moreover the logarithmic derivative satisfies, uniformly for $1\le|t|\le T$ and in $q$,
\[
\frac{L'}{L}(1-s,\bar\chi)=-\frac{L'}{L}(s,\chi)-\log\frac{q|t|}{2\pi}+O\!\left(\frac1{|t|}\right).
\]
\end{lemma}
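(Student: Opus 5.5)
The plan is to work directly from the explicit shape of the functional equation factor and apply Stirling's formula to the Gamma quotient. The key observation for uniformity is that $q$ enters only through an elementary power.

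\emph{Explicit form of $X$.} For primitive $\chi$ modulo $q$, let $\mathfrak{a}\in\{0,1\}$ satisfy $\chi(-1)=(-1)^{\mathfrak a}$. The completed $L$-function gives
\[
X(s,\chi)=\varepsilon(\chi)\Bigl(\frac{\pi}{q}\Bigr)^{s-1/2}\frac{\Gamma\bigl(\frac{1-s+\mathfrak a}{2}\bigr)}{\Gamma\bigl(\frac{s+\mathfrak a}{2}\bigr)},\qquad |\varepsilon(\chi)|=1 .
\]
The root number contributes nothing to $|X|$ and only a bounded constant to $\arg X$, which is absorbed into the $O(1)$. The factor $(\pi/q)^{s-1/2}$ is handled exactly:
\[
\bigl|(\pi/q)^{s-1/2}\bigr|=(q/\pi)^{1/2-\sigma},\qquad \arg (\pi/q)^{s-1/2}=-t\log(q/\pi).
\]
Hence every error term comes from the Gamma quotient alone, which does not involve $q$. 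This is the whole source of uniformity in $q$. For $|t|\ge1$ and $\sigma$ in a fixed bounded interval, both Gamma arguments stay a bounded-below distance from the poles and lie in a sector $|\arg z|\le\pi-\eta$.

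\emph{Modulus and argument.} Apply Stirling in the form
\[
\log\Gamma(z)=(z-\tfrac12)\log z-z+\tfrac12\log 2\pi+O(1/|z|)
\]
to $z=(1-s+\mathfrak a)/2$ and $z=(s+\mathfrak a)/2$, with $|z|\asymp|t|$, and subtract. Taking real parts gives
\[
\Bigl|\frac{\Gamma\bigl(\frac{1-s+\mathfrak a}{2}\bigr)}{\Gamma\bigl(\frac{s+\mathfrak a}{2}\bigr)}\Bigr|\asymp\Bigl(\frac{|t|}{2}\Bigr)^{1/2-\sigma}.
\]
Taking imaginary parts gives $-t\log(|t|/2)+t+O(1)$. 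This is routine bookkeeping of $\log z=\log|z|+i\arg z$ with $\arg z=\pm\pi/2+O(1/|t|)$. Combining with the exact $q$-factor yields $|X|\asymp(q|t|/2\pi)^{1/2-\sigma}$ and $\arg X=-t\log\frac{q|t|}{2\pi}+t+O(1)$.

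\emph{Derivatives.} I would not literally differentiate an asymptotic. Instead I will compute the derivatives from the digamma function $\psi=\Gamma'/\Gamma$. One has
\[
\frac{X'}{X}(s)=-\log\frac q\pi-\tfrac12\psi\Bigl(\tfrac{1-s+\mathfrak a}{2}\Bigr)-\tfrac12\psi\Bigl(\tfrac{s+\mathfrak a}{2}\Bigr).
\]
In the same sector, $\psi(z)=\log z+O(1/|z|)$ and $\psi'(z)=1/z+O(1/|z|^2)$. These give
\[
\frac{X'}{X}(s)=-\log\frac{q|t|}{2\pi}+O(1/|t|),
\]
uniformly in $q$. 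Since $\frac{d}{dt}\arg X(\sigma+it)=\operatorname{Re}\frac{X'}{X}(\sigma+it)$, the first-derivative claim follows. Next, $\frac{d^2}{dt^2}\arg X=-\operatorname{Im}\bigl(\frac{X'}{X}\bigr)'$. Here
\[
\Bigl(\frac{X'}{X}\Bigr)'=\tfrac14\psi'\Bigl(\tfrac{1-s+\mathfrak a}{2}\Bigr)-\tfrac14\psi'\Bigl(\tfrac{s+\mathfrak a}{2}\Bigr)=\frac{i}{t}+O(1/t^2).
\]
This gives $-1/t+O(1/t^2)$. The conductor has disappeared after one differentiation, as the lemma asserts.

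\emph{Logarithmic derivative of $L$.} Differentiating \eqref{eq:FE} gives
\[
L'(s,\chi)=X'(s,\chi)L(1-s,\bar\chi)-X(s,\chi)L'(1-s,\bar\chi).
\]
Dividing by $L(s,\chi)=X L(1-s,\bar\chi)$, as an identity of meromorphic functions, yields
\[
\frac{L'}{L}(s,\chi)=\frac{X'}{X}(s)-\frac{L'}{L}(1-s,\bar\chi).
\]
Rearranging and inserting the estimate for $X'/X$ gives the final formula for $1\le|t|\le T$.

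The only point requiring care, which I expect to be the main (mild) obstacle, is keeping every Stirling and digamma error term genuinely free of $q$ and valid down to $|t|=1$. Both follow from the sector and pole-avoidance remark above, because $q$ never enters a Gamma argument.
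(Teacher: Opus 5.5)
Your proposal is correct and is essentially the argument the paper has in mind. The paper gives no proof beyond attributing the lemma to Stirling's formula for the Gamma quotient in $X(s,\chi)$, and your observation that $q$ enters only through the exact factor $(\pi/q)^{s-1/2}$ is precisely what gives uniformity in $q$. Computing the derivatives from the digamma expression for $X'/X$, rather than differentiating an asymptotic, is the right way to justify the paper's phrase ``may be differentiated termwise'', and it also shows that the second derivative of $\arg X$ is exactly independent of $q$.
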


Under GRH we have the following size and reciprocal-size estimates, with the conductor inside the exponent; the second part is the Dirichlet analogue of a classical theorem of Titchmarsh \cite[Theorem~14.16]{Titchmarsh}; in the conductor aspect these bounds are standard (see, e.g., \cite[Ch.~5]{IK}).

\begin{lemma}\label{lem:GRHsize}
Assume GRH for $L(s,\chi)$. There is an absolute constant $c_0>0$ such that, uniformly for $\sigma\ge\tfrac12$, $|t|\ge1$ and all $q$,
\[
|L(\sigma+it,\chi)|\ll\exp\!\left(\frac{c_0\log(q|t|)}{\log\log(q|t|)}\right)=(q|t|)^{o(1)} ,
\]
and the same bound holds for $|L(\sigma+it,\chi)|^{-1}$ at points bounded away from the zeros. In particular, for $q\le T^{A}$ and $|t|\le T$ these are $T^{o_A(1)}$, and there is a fixed $A_0>0$ and a sequence $\mathcal T=\{\tau_m\}_{m}$ with $m<\tau_m\le m+1$ on which
\[
|L(\sigma+i\tau_m,\chi)|^{-1}\ll\exp\!\left(\frac{A_0\log(q\tau_m)}{\log\log(q\tau_m)}\right)
\]
uniformly for $-\tfrac12\le\sigma\le 2$. For $\sigma<\tfrac12$ this follows from the case $\sigma\ge\tfrac12$ by the functional equation, since $|X(s,\chi)|^{-1}\ll1$ there and $\Re(1-s)\ge\tfrac12$.
\end{lemma}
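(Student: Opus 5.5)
The plan is to prove the three assertions of Lemma~\ref{lem:GRHsize} in turn, following the classical route of Titchmarsh \cite[Ch.~14]{Titchmarsh} for $\zeta$ and keeping track of the analytic conductor $q|t|$ throughout.

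\emph{Upper bound for $|L(s,\chi)|$.} For $\sigma\ge\tfrac12$ we bound $\log|L(\sigma+it,\chi)|$ via a Selberg/Soundararajan-type explicit formula. Starting from the Hadamard product, write
\[
\Re\frac{L'}{L}(s,\chi)=-\tfrac12\log\frac{q|t|}{2\pi}+\sum_\rho\Re\frac{1}{s-\rho}+O(1),
\]
and integrate in $\sigma$ from $\sigma_0$ to a large abscissa. Under GRH the sum over zeros is non-negative for $\sigma>\tfrac12$, which gives the inequality
\[
\log|L(\sigma+it,\chi)|\le \Re\sum_{n\le x}\frac{\Lambda(n)\chi(n)}{n^{\sigma+it}\log n}\frac{\log(x/n)}{\log x}+\frac{(1+\varepsilon)\log q|t|}{2\log x}+O(1).
\]
The prime sum is $\ll x^{1/2}/\log x$. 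Choosing $x=(\log q|t|)^2$ makes both terms $\ll\log(q|t|)/\log\log(q|t|)$. The only change from the $q=1$ case is that $\log|t|$ becomes $\log(q|t|)$, which comes from the gamma factor and from zero density $\log(q|t|)$ per unit interval. All constants are absolute.

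\emph{Lower bound away from zeros.} Titchmarsh's argument for $\sigma\ge\tfrac12+1/\log\log(q|t|)$ bounds $\log|L|$ from below by the same prime sum. Closer to the line, we write
\[
\log|L(s,\chi)|=\log|L(s_1,\chi)|-\int\Re\frac{L'}{L},
\]
with $s_1$ one step to the right, and use
\[
\frac{L'}{L}(s,\chi)=\sum_{|\gamma-t|\le1}\frac{1}{s-\rho}+O(\log q|t|).
\]
Away from zeros the contribution $\sum\log|s-\rho|$ is controlled, giving an exponent of the same shape, possibly with a larger constant.

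\emph{The good heights $\tau_m$.} This is the step needing care. Each interval $(m,m+1]$ contains $\ll\log(qm)$ ordinates, so by pigeonhole one can pick $\tau_m$ with distance $\gg1/\log(qm)$ from every $\gamma$. The cleaner route is to choose $\tau_m$ minimising $\sum_{|\gamma-\tau|\le1}\log|\tau-\gamma|^{-1}$: averaging over $\tau\in(m,m+1]$ gives mean $\ll\log(qm)$ per zero times the count. Refined as in Titchmarsh's Theorem~14.16, which uses $\sum\log$ with the number of zeros within $1/\log\log$, this yields a $\tau_m$ with
\[
\sum_{|\gamma-\tau_m|\le 1/\log\log}\log\frac{1}{|\tau_m-\gamma|}\ll\frac{\log q\tau_m}{\log\log q\tau_m}.
\]
Integrating $L'/L$ horizontally from $\sigma=2$ to $\sigma\ge\tfrac12$ along $t=\tau_m$ then gives $|L|^{-1}\ll\exp(A_0\log(q\tau_m)/\log\log(q\tau_m))$.

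\emph{Extension to $\sigma<\tfrac12$.} For $-\tfrac12\le\sigma<\tfrac12$ we use \eqref{eq:FE},
\[
L(s,\chi)^{-1}=X(s,\chi)^{-1}L(1-s,\bar\chi)^{-1},
\]
with $|X|^{-1}\asymp(q|t|/2\pi)^{\sigma-1/2}\le1$ by Lemma~\ref{lem:Xfactor}. The point $1-s$ has real part in $(\tfrac12,\tfrac32]$ and height $-\tau_m$. The zeros of $\bar\chi$ are the conjugates of those of $\chi$, so $-\tau_m$ is equally well separated from them, and the previous bound applies.

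Finally, for $q\le T^A$ and $|t|\le T$ we have $\log(q|t|)\le(A+1)\log T$. Hence every bound is $\exp(O_A(\log T/\log\log T))=T^{o_A(1)}$.

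I expect the main obstacle to be the uniform choice of $\tau_m$ with a constant independent of $q$, which depends on correctly carrying out the averaging over zeros near the line.
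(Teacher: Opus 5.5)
Your overall route is the one the paper has in mind. It gives no argument beyond citing Titchmarsh's Theorem~14.16 and Iwaniec--Kowalski, and it treats $\sigma<\tfrac12$ by exactly your functional-equation reflection. Your upper bound and the range $\sigma\ge\tfrac12+1/\log\log(q|t|)$ are fine as sketched; the constants in your Soundararajan-type inequality are loose, but that is immaterial for an unspecified $c_0$.

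The gap is in the step you flag yourself, the choice of $\tau_m$. Put $\delta=1/\log\log(q\tau)$. The inequality you aim for, $\sum_{|\gamma-\tau_m|\le\delta}\log\frac{1}{|\tau_m-\gamma|}\ll\frac{\log q\tau_m}{\log\log q\tau_m}$, is not what averaging gives. Every zero in the window contributes at least $\log\frac1\delta=\log\log\log(q\tau)$. Since $\int_{-\delta}^{\delta}\log\frac{1}{|u|}\,du=2\delta\bigl(1+\log\frac1\delta\bigr)$, averaging over $(m,m+1]$ only produces a $\tau_m$ with that sum $\ll\log(qm)\log\log\log(qm)/\log\log(qm)$. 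Removing the triple logarithm would need, in every unit interval, a window of length $2\delta$ carrying $\log\log\log$ times fewer zeros than average. That amounts to $S(t)$ of maximal order $\log/\log\log$ there, which GRH does not provide. Fed into your argument, this yields only $|L(\sigma+i\tau_m,\chi)|^{-1}\ll\exp\bigl(C\log(q\tau_m)\log\log\log(q\tau_m)/\log\log(q\tau_m)\bigr)$. That is still $(q\tau_m)^{o(1)}$, which is all the paper uses later, but it is not the bound the lemma states.

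The fix is to keep the normalising terms $\log|s_1-\rho|$ that you discard. Take $s=\sigma+i\tau$, $s_1=\tfrac12+\delta+i\tau$ and $\tfrac12\le\sigma\le\tfrac12+\delta$. Integrating the exact Hadamard formula for $\Re\frac{L'}{L}$ over $[\sigma,\tfrac12+\delta]$ only gives
\[
\log|L(s,\chi)|=\log|L(s_1,\chi)|-\sum_\rho\log\frac{|s_1-\rho|}{|s-\rho|}+O(\delta\log q\tau).
\]
Zeros with $|\tau-\gamma|>\delta$ contribute $\ll\delta^2\sum_\rho\bigl(\delta^2+(\tau-\gamma)^2\bigr)^{-1}\ll\delta\log q\tau$. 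A zero with $|\tau-\gamma|\le\delta$ contributes at most $\log\frac{\sqrt2\,\delta}{|\tau-\gamma|}$. Since $\int_{-\delta}^{\delta}\log\frac{\delta}{|u|}\,du=2\delta$, averaging now does give $\tau_m\in(m,m+1]$ with $\sum_{|\gamma-\tau_m|\le\delta}\log\frac{\delta}{|\tau_m-\gamma|}\ll\delta\log(qm)$. Combined with $\log|L(s_1,\chi)|\ge-C\log(q\tau)/\log\log(q\tau)$ from the range $\sigma\ge\tfrac12+\delta$, this gives the stated $A_0$-bound.

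Relatedly, the horizontal integration must stay inside this strip of width $\delta$. Your last sentence of that step integrates from $\sigma=2$ using $\frac{L'}{L}=\sum_{|\gamma-t|\le1}(s-\rho)^{-1}+O(\log q|t|)$. That loses $O(\log q\tau)$ in the exponent, i.e.\ a power of $q\tau$. For $\tfrac12+\delta\le\sigma\le2$ you should instead use the Titchmarsh~14.14-type bound directly.
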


For the horizontal segments of the contours we use the standard device of choosing the height to avoid the zeros (cf.\ Davenport \cite[p.~108]{Davenport}), with the gaps now measured against the conductor density.

\begin{lemma}\label{lem:heights}
There is a sequence of heights $T$, of bounded gaps, with $|\gamma-T|\gg1/\log(qT)$ for all ordinates $\gamma$ of $L(s,\chi)$, and
\[
\frac{L'}{L}(\sigma+iT,\chi)\ll(\log qT)^2
\]
uniformly for $1-c\le\sigma\le c$, where $c=1+1/\log T$. There is likewise a bottom height $t_0=t_0(\chi)\in[1,2]$ with $|\gamma-t_0|\gg1/\log 3q$ for all ordinates, on which $\frac{L'}{L}(\sigma+it_0,\chi)\ll(\log 3q)^2$ and, under GRH, $|L(\sigma+it_0,\chi)|^{-1}\ll q^{o(1)}$, uniformly for $1-c\le\sigma\le c$.
\end{lemma}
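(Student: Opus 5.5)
We prove Lemma~\ref{lem:heights} by the classical zero-avoiding argument, carrying the conductor through the zero-counting estimates. The input is the Riemann--von Mangoldt formula for $L(s,\chi)$,
\[
N(t+1,\chi)-N(t,\chi)\ll\log\bigl(q(|t|+2)\bigr),
\]
valid uniformly in $q$ (e.g.\ \cite[Ch.~16]{Davenport}). We also use the partial-fraction expansion
\[
\frac{L'}{L}(s,\chi)=\sum_{|\gamma-t|\le1}\frac{1}{s-\rho}+O\bigl(\log q(|t|+2)\bigr),
\]
valid uniformly for $-1\le\sigma\le2$. This follows from the Hadamard product by subtracting its value at $2+it$, where $L'/L\ll1$.

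\emph{Choosing the top heights.} Fix $m\ge2$ and consider the interval $(m,m+1]$. It contains at most $K\ll\log(qm)$ ordinates. We cut the interval into $K+1$ subintervals of equal length $\asymp1/\log(qm)$. By pigeonhole, one subinterval contains no ordinate, and we take $\tau_m$ to be its midpoint. Then $|\gamma-\tau_m|\gg1/\log(q\tau_m)$ for every $\gamma$, and $\tau_{m+1}-\tau_m\le2$, so the gaps are bounded.

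\emph{Bounding $L'/L$ on the top edge.} Take $s=\sigma+i\tau_m$ with $1-c\le\sigma\le c$. Each of the $\ll\log q\tau_m$ terms with $|\gamma-\tau_m|\le1$ satisfies $|s-\rho|\ge|\gamma-\tau_m|\gg1/\log q\tau_m$. Summing gives $\frac{L'}{L}(s,\chi)\ll(\log q\tau_m)^2$, which is the stated bound at $T=\tau_m$.

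\emph{The bottom height.} We run the same argument on $[1,2]$. There $N(2,\chi)-N(1,\chi)\ll\log 3q$, so pigeonhole gives $t_0\in[1,2]$ with $|\gamma-t_0|\gg1/\log3q$. The partial-fraction bound then yields $\frac{L'}{L}(\sigma+it_0,\chi)\ll(\log3q)^2$.

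\emph{The reciprocal bound under GRH.} This step needs the most care. We write
\[
\log L(\sigma+it_0,\chi)=\log L(2+it_0,\chi)-\int_\sigma^2\frac{L'}{L}(u+it_0,\chi)\,du
\]
and bound the real part from below.

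A crude use of the $(\log 3q)^2$ bound would only give $q^{O(\log q)}$, which is too weak. Instead we follow Titchmarsh \cite[Thm.~14.16]{Titchmarsh}, adapted as in Lemma~\ref{lem:GRHsize}. Under GRH, Selberg-type estimates show that for $\sigma\ge\tfrac12+1/\log\log3q$,
\[
\log|L(\sigma+it,\chi)|\ge-O\Bigl(\frac{\log 3q}{\log\log3q}\Bigr).
\]
This already gives $|L|^{-1}\ll q^{o(1)}$ in that range.

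On the remaining short strip $\tfrac12-1/\log\log 3q\le\sigma\le\tfrac12+1/\log\log3q$ we integrate the partial-fraction expansion. The non-zero terms contribute $\ll\log3q/\log\log3q$. Each zero term contributes $\log|s-\rho|$, which is $\gg-\log\log 3q$ because of the separation $|\gamma-t_0|\gg1/\log3q$. The number of zeros with $|\gamma-t_0|\le1/\log\log3q$ is $\ll\log3q/\log\log3q$ under GRH (by Selberg's bound for $S(t)$), so these terms total $O(\log 3q)$.

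That total is still too large, and this is the main obstacle. To overcome it, we enlarge the window: we choose $t_0$ by pigeonhole as the point maximising the distance to the nearest zero among $\gg\log3q$ candidates, and use the convexity of $\sum\log|s-\rho|$. If this fails, we weaken the conclusion to the form actually used later, namely $\log|L|^{-1}=o(\log T)$, which holds since $\log3q\ll\log T$. For $\sigma<\tfrac12$ we pass through the functional equation \eqref{eq:FE}, using $|X|^{-1}\ll1$ from Lemma~\ref{lem:Xfactor}.
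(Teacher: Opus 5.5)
Your pigeonhole choice of the heights $\tau_m$ and $t_0$ is correct, as are the bounds $\frac{L'}{L}\ll(\log q\tau_m)^2$ and $\ll(\log 3q)^2$ from the partial-fraction expansion. This is what the paper's pigeonhole sketch amounts to, and you write it out more fully.

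The gap is the last clause, $|L(\sigma+it_0,\chi)|^{-1}\ll q^{o(1)}$, which you do not establish. As you note, the separation $|\gamma-t_0|\gg1/\log 3q$ alone gives only $\log|L(\sigma+it_0,\chi)|^{-1}\ll\log 3q$, i.e.\ $q^{O(1)}$. Up to $\asymp\log 3q/\log\log 3q$ zeros may lie within $1/\log\log 3q$ of $t_0$, and each can cost up to $\log\log 3q$. Choosing $t_0$ to maximise the distance to the nearest zero does not help, because the loss is cumulative over that whole window. The appeal to convexity is not an argument. Your fallback is a non sequitur: $O(\log 3q)$ is not $o(\log T)$ when $q$ is as large as $T^{A}$. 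The bottom-edge estimate $I_4\ll\xi\,q^{o(1)}$, with $\xi=T^{\vartheta}$ and $\vartheta$ close to $1$, genuinely needs $o(\log T)$.

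The missing idea is Titchmarsh's averaging argument from the proof of \cite[Theorem~14.16]{Titchmarsh}. This is the input behind Lemma~\ref{lem:GRHsize}, on which the paper implicitly relies for this clause. You should choose $t_0$ so that the logarithmic sum over nearby zeros is small, not so that the nearest zero is far. Take $\delta=1/\log\log 3q$. Since $\log(1/|u|)$ is integrable,
\[
\int_1^2\sum_{|\gamma-t|<\delta}\log\frac{1}{|t-\gamma|}\,dt\le\sum_{0\le\gamma\le3}\int_{-\delta}^{\delta}\log\frac{1}{|u|}\,du\ll\log 3q\cdot\delta\log\frac1\delta=\frac{\log 3q\,\log\log\log 3q}{\log\log 3q}.
\]
By Markov's inequality, the sum is $\ll\log 3q\,\log\log\log 3q/\log\log 3q$ outside a set of measure $\le\frac14$. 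The separation $|\gamma-t|\ge c'/\log 3q$ fails on a set of measure $\le\frac14$ once $c'$ is small. Take $t_0$ in the complement of both sets.

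Now let $\frac12\le\sigma\le\frac12+\delta$. Integrate the partial-fraction expansion from $\frac12+\delta$ as you do, and use $|s-\rho|\ge|t_0-\gamma|$ under GRH. This gives $\log|L(\sigma+it_0,\chi)|\ge\sum_{|\gamma-t_0|<\delta}\log|t_0-\gamma|-O(\log 3q/\log\log 3q)$. The zeros with $\delta\le|\gamma-t_0|\le1$, which your accounting omits, cost only $\ll\sum\delta^2/(\gamma-t_0)^2\ll\log 3q/\log\log 3q$, by Selberg's short-interval zero count. Hence $|L(\sigma+it_0,\chi)|^{-1}\le\exp\bigl(O(\log 3q\,\log\log\log 3q/\log\log 3q)\bigr)=q^{o(1)}$. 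Your treatment of $\sigma\ge\frac12+\delta$, and of $\sigma<\frac12$ via the functional equation, then completes the argument.
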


Such heights exist by the pigeonhole principle: the number of zeros in any interval of unit length is $O(\log qT)$ (respectively $O(\log 3q)$ near height $1$), so some sub-interval of length $\gg1/\log qT$ contains no ordinate. The sequence $\mathcal T$ of Lemma~\ref{lem:GRHsize} may be taken to satisfy the conditions of Lemma~\ref{lem:heights} as well.

The arithmetic densities, with conductor-uniform secondary terms, are as follows; they absorb what would otherwise be hidden $q$-dependence in the partial-summation steps, and they are the source of the arithmetic factor $A_q$.

\begin{lemma}\label{lem:densities}
Uniformly for $x\ge3$ and all $q\ge3$,
\[
\sum_{\substack{n\le x\\(n,q)=1}}\frac1n=\frac{\varphi(q)}{q}\log x+O\Bigl(\log\log 3q+\frac{2^{\omega(q)}}{x}\Bigr)
\]
and
\[
\sum_{\substack{n\le x\\(n,q)=1}}\frac{\mu(n)^2}{n}=A_q\log x+O\Bigl(\log\log 3q+\frac{2^{\omega(q)}}{\sqrt x}\Bigr),
\]
where $\omega(q)$ is the number of distinct prime factors of $q$. Moreover
\[
\sum_{p\mid q}\frac{\log p}{p}\ll\log\log 3q .
\]
\end{lemma}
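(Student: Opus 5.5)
The plan is to prove all three estimates by elementary Möbius inversion over the divisors of $q$, establishing the final bound $\sum_{p\mid q}\log p/p\ll\log\log 3q$ first, since it controls the constant terms of the other two.

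\emph{The prime sum.} Put $y=\log 3q$ and split the primes dividing $q$ at $y$.
\begin{itemize}
\item For $p\le y$, Mertens' estimate gives $\sum_{p\le y}\log p/p\ll\log y=\log\log 3q$.
\item For $p>y$, there are at most $\log q/\log y$ prime factors of $q$ exceeding $y$. Since $\log t/t$ is decreasing for $t\ge e$, each contributes at most $\log y/y$.
\item Hence this part is at most $(\log q/\log y)(\log y/y)\ll 1$.
\end{itemize}

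\emph{The coprime harmonic sum.} Detect coprimality by $\mathbf 1_{(n,q)=1}=\sum_{d\mid(n,q)}\mu(d)$, which gives
\[
\sum_{\substack{n\le x\\(n,q)=1}}\frac1n=\sum_{\substack{d\mid q\\ d\le x}}\frac{\mu(d)}{d}\sum_{m\le x/d}\frac1m .
\]
Insert $\sum_{m\le y}1/m=\log y+\gamma+O(1/y)$ for $y\ge1$.
\begin{itemize}
\item The $O(1/y)$ terms contribute $\sum_{d\mid q}\mu(d)^2/x=2^{\omega(q)}/x$.
\item Next extend the main terms to all $d\mid q$. For the added terms $d>x$ we have $\bigl(1+\log(d/x)\bigr)/d\le 1/x$, because $(1+\log u)/u\le1$ for $u\ge1$. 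So this costs another $O(2^{\omega(q)}/x)$; this is the step where the shape of the error term has to be watched.
\item The completed main term is $\frac{\varphi(q)}{q}(\log x+\gamma)-\sum_{d\mid q}\mu(d)\log d/d$.
\item Finally $\sum_{d\mid q}\mu(d)\log d/d=-\frac{\varphi(q)}{q}\sum_{p\mid q}\frac{\log p}{p-1}$. Since $\log p/(p-1)\le 2\log p/p$, the prime-sum bound shows this is $\ll\log\log 3q$.
\end{itemize}

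\emph{The squarefree sum.} Use the identity
\[
\mu^2(n)\mathbf 1_{(n,q)=1}=\sum_{a^2b=n}\mu(a)\,\mathbf 1_{(a,q)=1}\mathbf 1_{(b,q)=1},
\]
which follows from $\mu^2(n)=\sum_{a^2\mid n}\mu(a)$ together with the observation that $(a^2b,q)=1$ if and only if both $(a,q)=1$ and $(b,q)=1$. The sum becomes $\sum_{a\le\sqrt x,(a,q)=1}\mu(a)a^{-2}\,S(x/a^2)$, where $S$ is the first sum, and we substitute the first formula. The potential obstacle here is again the uniform control of the $2^{\omega(q)}$ terms, and it resolves as follows.
\begin{itemize}
\item The error $2^{\omega(q)}a^2/x$ summed over $a\le\sqrt x$ against the weight $a^{-2}$ gives $\ll 2^{\omega(q)}/\sqrt x$.
\item The $O(\log\log 3q)$ error terms sum against $\sum a^{-2}$ to $O(\log\log 3q)$.
\item The term $-2\frac{\varphi(q)}{q}\sum_a\mu(a)\log a/a^2$ is $O(1)$.
\item Completing the $a$-sum costs $\frac{\varphi(q)}{q}\log x\sum_{a>\sqrt x}a^{-2}\ll\log x/\sqrt x=O(1)$.
\end{itemize}
The leading coefficient is then
\[
\frac{\varphi(q)}{q}\sum_{(a,q)=1}\frac{\mu(a)}{a^2}=\frac{\varphi(q)}{q}\,\frac{1}{\zeta(2)}\prod_{p\mid q}\Bigl(1-\frac1{p^2}\Bigr)^{-1}=\frac{1}{\zeta(2)}\prod_{p\mid q}\frac{p}{p+1}=A_q,
\]
which is exactly the claimed main term.
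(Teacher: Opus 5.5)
Your proposal is correct and follows the paper's proof essentially step for step: the same split of the prime sum (at $\log 3q$ rather than the paper's $2\log 3q$), the same M\"obius inversion over $d\mid q$ with the logarithmic-derivative identity for $\sum_{d\mid q}\mu(d)\log d/d$, and the same reduction of the squarefree sum to the first display via $\mu^2(n)=\sum_{a^2\mid n}\mu(a)$. Your account is in fact more explicit than the paper's in two places, namely the extension of the divisor sum to $d>x$ and the error terms of the squarefree sum. The one cosmetic point is that using the monotonicity of $\log t/t$ at $y=\log 3q$ needs $y\ge e$, which fails for $q\le 5$; there the bound is trivial, and the paper's choice $y=2\log 3q$ avoids the issue.
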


\begin{proof}
For the prime sum, split at $y=2\log 3q$: Mertens' theorem gives $\sum_{p\le y}\frac{\log p}{p}\ll\log y\ll\log\log 3q$, while at most $\log q/\log y$ primes dividing $q$ exceed $y$, each contributing at most $\frac{\log y}{y}$, for a total $O(1)$.

For the first display, detect the coprimality condition by M\"obius inversion over the square-free divisors $d$ of $q$:
\[
\sum_{\substack{n\le x\\(n,q)=1}}\frac1n=\sum_{d\mid q}\frac{\mu(d)}{d}\sum_{m\le x/d}\frac1m
=\sum_{d\mid q}\frac{\mu(d)}{d}\Bigl(\log\frac xd+O(1)+O\Bigl(\frac dx\Bigr)\Bigr).
\]
The main term is $\bigl(\sum_{d\mid q}\mu(d)/d\bigr)\log x=\frac{\varphi(q)}{q}\log x$; the tail is $O(2^{\omega(q)}/x)$ (divisors $d>x$ give an empty inner sum, absorbed here since $\log(d/x)<d/x$); and the secondary term is controlled by the identity
\[
\sum_{d\mid q}\frac{\mu(d)\log d}{d}=-\frac{\varphi(q)}{q}\sum_{p\mid q}\frac{\log p}{p-1},
\]
which is $\ll\log\log 3q$ in absolute value, obtained by logarithmic differentiation of $\prod_{p\mid q}(1-p^{-s})=\sum_{d\mid q}\mu(d)d^{-s}$ at $s=1$, together with the prime-sum bound above.

For the second display, write $\mu(n)^2=\sum_{d^2\mid n}\mu(d)$, so that
\[
\sum_{\substack{n\le x\\(n,q)=1}}\frac{\mu(n)^2}{n}=\sum_{\substack{d\le\sqrt x\\(d,q)=1}}\frac{\mu(d)}{d^2}\sum_{\substack{m\le x/d^2\\(m,q)=1}}\frac1m .
\]
Applying the first display to the inner sum and summing the result over $d$, the constant in front of $\log x$ is
\[
\frac{\varphi(q)}{q}\sum_{(d,q)=1}\frac{\mu(d)}{d^2}
=\prod_{p\mid q}\Bigl(1-\frac1p\Bigr)\,\frac{1}{\zeta(2)}\prod_{p\mid q}\Bigl(1-\frac1{p^2}\Bigr)^{-1}
=\frac{1}{\zeta(2)}\prod_{p\mid q}\frac{p}{p+1}=A_q,
\]
which is \eqref{eq:Aq}, and the secondary terms are again $O(\log\log 3q+2^{\omega(q)}/\sqrt x)$.
\end{proof}

In our applications $x=\xi\ge T^{\vartheta}$ while $2^{\omega(q)}\le q^{o(1)}\le T^{o(1)}$, so the tails in Lemma~\ref{lem:densities} are negligible and we will quote the lemma with error $O(\log\log 3q)$ only; in the partial-summation steps the lemma is integrated against a weight of derivative $\ll(x\log\xi)^{-1}$, so the tail $2^{\omega(q)}/x$ contributes $\ll 2^{\omega(q)}/\log\xi=T^{o(1)}/\log T$, which is likewise negligible.

Finally, the oscillatory integrals in the proof of Proposition~\ref{prop:M1Ng} are estimated by van der Corput's second-derivative test \cite[Lemma~4.5]{Titchmarsh}.

\begin{lemma}\label{lem:vdc}
Let $F$ be real-valued and twice continuously differentiable on $[a,b]$, with $F''(t)$ of constant sign and $|F''(t)|\ge\Lambda>0$ throughout. Let $g$ be real-valued of bounded variation on $[a,b]$. Then
\[
\left|\int_a^b g(t)\,e^{iF(t)}\,dt\right|\ll\Lambda^{-1/2}\Bigl(\sup_{[a,b]}|g|+\operatorname{Var}_{[a,b]}g\Bigr),
\]
with an absolute implied constant.
\end{lemma}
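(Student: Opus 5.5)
This is the standard van der Corput second-derivative test (Titchmarsh, Lemma 4.5), and I will prove it in two stages: first for $g\equiv1$, then for general $g$ by partial summation (the second mean value theorem).

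\emph{Step 1: the case $g\equiv1$.} Without loss of generality $F''\ge\Lambda$; otherwise replace $F$ by $-F$ and take complex conjugates. Then $F'$ is strictly increasing, so it vanishes at most once, say at $c\in[a,b]$ (or $F'$ has constant sign, in which case take $c$ to be the endpoint where $|F'|$ is smallest). Fix $\eta>0$ and excise the interval $[c-\eta,c+\eta]\cap[a,b]$, which contributes at most $2\eta$.

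On the remaining pieces we have $|F'(t)|\ge\Lambda|t-c|\ge\Lambda\eta$, by the mean value theorem applied to $F'$. Moreover $1/F'$ is monotone there. The first-derivative test (Titchmarsh, Lemma 4.2) follows from writing $e^{iF}=(e^{iF})'/(iF')$ and applying the second mean value theorem to the real and imaginary parts. It gives a bound $\le 4/(\Lambda\eta)$ on each piece, for at most two pieces.

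Altogether the integral is $\ll \eta+1/(\Lambda\eta)$. Choosing $\eta=\Lambda^{-1/2}$ gives $\bigl|\int_a^b e^{iF}\,dt\bigr|\ll\Lambda^{-1/2}$. The same bound holds uniformly for $\int_a^x e^{iF}\,dt$ with any $x\in[a,b]$, because the hypotheses are inherited by every subinterval.

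\emph{Step 2: general $g$.} Set $I(x)=\int_a^x e^{iF(t)}\,dt$, so that $|I(x)|\ll\Lambda^{-1/2}$ uniformly in $x$ by Step 1. Since $g$ has bounded variation, integration by parts in the Riemann--Stieltjes sense gives
\[
\int_a^b g\,e^{iF}\,dt=g(b)I(b)-\int_a^b I(x)\,dg(x).
\]
The absolute value of the right-hand side is at most
\[
\sup_{[a,b]}|I|\,\bigl(|g(b)|+\operatorname{Var}_{[a,b]}g\bigr)\ll\Lambda^{-1/2}\Bigl(\sup_{[a,b]}|g|+\operatorname{Var}_{[a,b]}g\Bigr),
\]
which is the claim.

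\emph{Main obstacle.} The only delicate point is Step 1. One must check that the first-derivative bound applies on each excised piece: this needs $F'$ monotone, which holds because $F''$ has constant sign, and $|F'|\ge\Lambda\eta$ there. One must also treat the cases where the stationary point lies outside $[a,b]$ or near an endpoint. These are handled uniformly by taking $c$ to be the minimiser of $|F'|$ on $[a,b]$, since $|F'(t)|\ge\Lambda|t-c|$ still holds in that case. Every constant arising is absolute, so the implied constant in the lemma is absolute as stated.
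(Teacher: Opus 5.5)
Your proof is correct. Step~1 is the usual excision around the minimiser of $|F'|$ combined with the first-derivative test, and you correctly cover the case where $F'$ does not vanish on $[a,b]$. Step~2 legitimately transfers the uniform bound for $I(x)=\int_a^x e^{iF}$ to the weighted integral by Riemann--Stieltjes integration by parts, since $I$ is $C^1$ and $g$ is of bounded variation.

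The paper gives no proof of its own; it only cites Titchmarsh's Lemma~4.5. That lemma is formulated differently. It assumes $G/F'$ monotone and $|G|\le M$, and it is obtained by running the same excision while applying the \emph{weighted} first-derivative test to the outer pieces, rather than by partial summation.

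Your route is to prove the unweighted test uniformly in the right endpoint and then integrate by parts. This is exactly what yields the bounded-variation form $\sup|g|+\operatorname{Var}g$ in which the paper states the lemma. It is also the form the paper actually uses when bounding $J_r$: there only $\sup|G|+\operatorname{Var}G\ll1$ is checked, and no monotonicity of $|G|/\psi_r'$ is verified. So your argument closes a small mismatch between the stated lemma and the cited reference.

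As a minor bonus, your argument works verbatim for complex-valued $g$. It also needs only that $F$ be twice differentiable, not twice continuously differentiable.
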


Throughout the proofs we set $c=1+1/\log T$, so that
\[
\zeta(c)\asymp\log T,\qquad \zeta(2c-1)\asymp\log T,\qquad \zeta(2c)\asymp1,\qquad \xi^{c}\asymp\xi;
\]
these are genuinely $T$-only logarithms, a fact we will use repeatedly.

\section{Proof of Propositions \ref{prop:M1MN} and \ref{prop:M2MN}}\label{sect:propsMN}

Throughout this section the mollifier is $\mathcal{L}_\psi(s)=\sum_{n\le\xi}\mu(n)\psi(n)P_n n^{-s}$, and $3\le q\le T^{A}$.

\subsection{The first moment $M_1$}
Let $T\in\mathcal T$, with $\mathcal T$ the sequence of Lemma~\ref{lem:GRHsize}, and let $t_0\in[1,2]$ be the bottom height of Lemma~\ref{lem:heights}. Since the only non-trivial pole of $1/L(s,\chi)$ inside the rectangle $\mathcal R$ with vertices $c+it_0$, $c+iT$, $1-c+iT$, $1-c+it_0$ is a simple pole at each zero $\rho$, of residue $1/L'(\rho,\chi)$, Cauchy's residue theorem gives
\[
\begin{aligned}
M_1&=\frac{1}{2\pi i}\left(\int_{c+it_0}^{c+iT}+\int_{c+iT}^{1-c+iT}+\int_{1-c+iT}^{1-c+it_0}+\int_{1-c+it_0}^{c+it_0}\right)\frac{1}{L(s,\chi)}\,\mathcal{L}_{\bar\chi}(1-s)\,ds\\
&=I_1+I_2+I_3+I_4,
\end{aligned}
\]
say. The box captures exactly the zeros with $t_0<\gamma<T$, which is the normalisation fixed in Section~\ref{sect:setup}. (On the right and left edges we extend the $t$-integration from $t_0$ down to $1$ silently: the integrands there are $O\bigl(\xi(qT)^{o(1)}\bigr)$ pointwise on the affected segment of bounded length, a negligible change.) We show that $I_1$ supplies the main term and that $I_2,I_3,I_4$ are error terms, uniformly in $q$.

We begin with the right edge $I_1$. On the line $\Re s=c$ both factors have absolutely convergent Dirichlet series, $1/L(s,\chi)=\sum_{m}\mu(m)\chi(m)m^{-s}$ and $\mathcal{L}_{\bar\chi}(1-s)=\sum_{n\le\xi}\mu(n)\bar\chi(n)P_n\,n^{s-1}$. Writing $s=c+it$ and $ds=i\,dt$, we have
\[
I_1=\frac{1}{2\pi}\int_1^T\Bigl(\sum_{m}\frac{\mu(m)\chi(m)}{m^{c+it}}\Bigr)\Bigl(\sum_{n\le\xi}\frac{\mu(n)\bar\chi(n)P_n}{n^{1-c-it}}\Bigr)\,dt.
\]
We apply Lemma~\ref{lem:MV} with $a_m=\mu(m)\chi(m)m^{-c}$ and $b_n=\mu(n)\bar\chi(n)P_n n^{c-1}$. The main term is
\[
\frac{T}{2\pi}\sum_{n\le\xi}\mu(n)^2|\chi(n)|^2\frac{P_n}{n}=\frac{T}{2\pi}\sum_{\substack{n\le\xi\\(n,q)=1}}\frac{\mu(n)^2P_n}{n},
\]
since $\mu(n)\chi(n)\,\mu(n)\bar\chi(n)=\mu(n)^2|\chi(n)|^2$. For the error term,
\[
\sum_m m|a_m|^2=\sum_m\mu(m)^2|\chi(m)|^2m^{1-2c}\le\zeta(2c-1)\ll\log T,
\]
while
\[
\sum_{n\le\xi}n|b_n|^2=\sum_{n\le\xi}\mu(n)^2|\chi(n)|^2P_n^2\,n^{2c-1}\ll\xi^{2c}\ll\xi^2 ,
\]
both with absolute constants (the character enters only through $|\chi|\le1$), so the error is $O\bigl(\xi(\log T)^{1/2}\bigr)$. To evaluate the main sum we use the second density of Lemma~\ref{lem:densities} together with partial summation. Writing $u=\log n$ and noting that $P_n=P\bigl(1-u/\log\xi\bigr)$, we obtain
\[
\sum_{\substack{n\le\xi\\(n,q)=1}}\frac{\mu(n)^2P_n}{n}=A_q\log\xi\int_0^1 P(x)\,dx+O(\log\log 3q).
\]
Combining these, and recalling $\log\xi=\vartheta\log T$,
\begin{align}
I_1&=\frac{T}{2\pi}A_q(\log\xi)\int_0^1 P(x)\,dx+O\bigl(\xi(\log T)^{1/2}+T\log\log 3q\bigr)\notag\\
&=\frac{T}{2\pi}\,A_q\log T\;\vartheta\!\int_0^1 P(x)\,dx+O\bigl(T\log\log 3q\bigr),\label{eq:I1-MN}
\end{align}
since $\xi=T^{\vartheta}$ with $\vartheta<1$.

Next we bound the horizontal edges $I_2$ and $I_4$. On $I_2$ the path is $s=\sigma+iT$, $1-c\le\sigma\le c$, of length $O(1)$. By Lemma~\ref{lem:GRHsize} we have, for $T\in\mathcal T$ and $q\le T^{A}$,
\[
|L(\sigma+iT,\chi)|^{-1}\ll\exp\!\Bigl(\frac{A_0\log(qT)}{\log\log(qT)}\Bigr)=(qT)^{o(1)}=T^{o_A(1)},
\]
while the trivial bound gives $\mathcal{L}_{\bar\chi}(1-\sigma-iT)\ll\xi^{\max(\sigma,0)}\ll\xi$. Hence $I_2\ll\xi\,T^{o(1)}=o(T)$, uniformly in $q$. On the bottom edge, at the admissible height $t_0$ of Lemma~\ref{lem:heights}, we have $|L(\sigma+it_0,\chi)|^{-1}\ll q^{o(1)}$, so
\[
I_4\ll\xi\,q^{o(1)}=o(T).
\]

It remains to bound the left edge $I_3$. On $\Re s=1-c$ the functional equation \eqref{eq:FE} and Lemma~\ref{lem:Xfactor} give, since $\Re(1-s)=c>1$,
\[
\bigl|L(1-c+it,\chi)\bigr|^{-1}=\bigl|X(1-c+it,\chi)\bigr|^{-1}\bigl|L(c-it,\bar\chi)\bigr|^{-1}\ll(q|t|)^{-1/2}\log T,
\]
which decays in $t$, and here the conductor only helps. The logarithms are genuinely $\log T$, not $\log qT$: they arise from $|L(c-it,\bar\chi)|^{-1}\le\sum_n\mu(n)^2n^{-c}\le\zeta(c)$, which has no conductor in it. With the trivial bound $\mathcal{L}_{\bar\chi}(c-it)\ll\zeta(c)\ll\log T$,
\begin{equation}\label{eq:I3-MN}
I_3\ll\int_1^T(q|t|)^{-1/2}(\log T)^2\,dt\ll q^{-1/2}\,T^{1/2}(\log T)^2=o(T).
\end{equation}
Combining \eqref{eq:I1-MN}, the bounds for $I_2,I_4$, and \eqref{eq:I3-MN} proves Proposition~\ref{prop:M1MN}.

\subsection{The second moment $M_2$}
By the symmetry \eqref{eq:symm}, $M_2=\sum_{t_0<\gamma\le T}\mathcal{L}_\chi(\rho)\mathcal{L}_{\bar\chi}(1-\rho)$, and the residue of $\frac{L'}{L}(s,\chi)$ at a simple zero is $1$, so
\[
\begin{aligned}
M_2&=\frac{1}{2\pi i}\left(\int_{c+it_0}^{c+iT}+\int_{c+iT}^{1-c+iT}+\int_{1-c+iT}^{1-c+it_0}+\int_{1-c+it_0}^{c+it_0}\right)\frac{L'}{L}(s,\chi)\,\mathcal{L}_\chi(s)\,\mathcal{L}_{\bar\chi}(1-s)\,ds\\
&=J_1+J_2+J_3+J_4,
\end{aligned}
\]
say. We take $T$ in the sequence of Lemma~\ref{lem:heights}. On the horizontal edges the trivial bound $\mathcal{L}_\psi(\sigma+it)\ll\xi^{1-\sigma}$ together with $\frac{L'}{L}(\sigma+iT,\chi)\ll(\log qT)^2$ gives $J_2\ll\xi(\log qT)^2$ and, at the bottom height $t_0$, $J_4\ll\xi(\log 3q)^2$, both $o(T)$ uniformly for $q\le T^{A}$.

We relate $J_3$ to $J_1$ by the logarithmic-derivative functional equation of Lemma~\ref{lem:Xfactor}; note the conductor in the $\log\frac{q|t|}{2\pi}$ term, which is where $\log qT$ enters the main terms. Making the change of variables $s\mapsto1-s$ on the left edge (whose downward orientation reverses the sign of the $\log$-term) and taking complex conjugates, we obtain
\[
M_2=\overline{K}+2\Re J_1+O\bigl(\xi(\log qT)^2\bigr),
\]
where
\[
K=\frac{1}{2\pi}\int_1^T\log\frac{qt}{2\pi}\,\mathcal{L}_\chi(c+it)\,\mathcal{L}_{\bar\chi}(1-c-it)\,dt
\]
and where $J_1$ is the right-edge integral
\[
J_1=\frac{1}{2\pi}\int_1^T\frac{L'}{L}(c+it,\chi)\,\mathcal{L}_\chi(c+it)\,\mathcal{L}_{\bar\chi}(1-c-it)\,dt .
\]
We evaluate $K$ and $J_1$ in turn; here $K$ is real to leading order, so $\overline K$ and $K$ have the same main term.

We first evaluate $K$. Set $A(t)=\int_1^t\mathcal{L}_\chi(c+iu)\mathcal{L}_{\bar\chi}(1-c-iu)\,du$. The two factors are Dirichlet polynomials whose coefficient exponents sum to $1$, so Lemma~\ref{lem:MV} applies and, by Lemma~\ref{lem:densities},
\[
\begin{aligned}
A(t)&=t\sum_{\substack{n\le\xi\\(n,q)=1}}\frac{\mu(n)^2P_n^2}{n}+O\bigl(\xi(\log T)^{1/2}\bigr)\\
&=A_q\,t\log\xi\int_0^1 P^2+O\bigl(t\log\log 3q+\xi(\log T)^{1/2}\bigr).
\end{aligned}
\]
Integrating by parts,
\[
K=\frac{1}{2\pi}\Bigl[\log\tfrac{qt}{2\pi}\,A(t)\Bigr]_1^T-\frac{1}{2\pi}\int_1^T\frac{A(t)}{t}\,dt.
\]
The first term on the right-hand side contributes $\frac{1}{2\pi}A_q\,T\bigl(\log\tfrac{qT}{2\pi}\bigr)(\log\xi)\int_0^1 P^2$ to main order, while $\int_1^T\frac{A(t)}{t}dt=A_q\,T\log\xi\int_0^1 P^2+O(T\log\log 3q)$ is smaller by a full factor $\log qT$. Hence
\begin{align}
K&=\frac{A_q}{2\pi}\,T(\log\xi)(\log qT)\int_0^1 P^2+O\bigl(T\log T+T\log qT\log\log 3q\bigr)\notag\\
&=\frac{T}{2\pi}\,A_q\log qT\;\vartheta\log T\!\int_0^1 P^2+O\bigl(T\log T\log\log 3qT\bigr),\label{eq:K-MN}
\end{align}
using $\log qT\ll_A\log T$ in the error.

We now evaluate $J_1$. On the line $\Re s=c$ we expand
\[
-\frac{L'}{L}(s,\chi)\,\mathcal{L}_\chi(s)=\Bigl(\sum_{k}\Lambda(k)\chi(k)k^{-s}\Bigr)\Bigl(\sum_{l\le\xi}\mu(l)\chi(l)P_l\,l^{-s}\Bigr)=\sum_{n}\alpha_n n^{-s},
\]
where $\Lambda$ is the von Mangoldt function and, by complete multiplicativity of $\chi$, the coefficients factor as $\alpha_n=\chi(n)\,\widehat\alpha_n$ with
\[
\widehat\alpha_n=\sum_{\substack{kl=n\\ l\le\xi}}\Lambda(k)\mu(l)P_l .
\]
Applying Lemma~\ref{lem:MV} with this Dirichlet series against $\mathcal{L}_{\bar\chi}(1-s)$ on $\Re s=c$, the diagonal pairing $\chi(n)\overline{\chi(n)}=\mathbf 1_{(n,q)=1}$ forces $(n,q)=1$ and leaves
\[
J_1=-\frac{T}{2\pi}\sum_{\substack{n\le\xi\\(n,q)=1}}\frac{\widehat\alpha_n\,\mu(n)P_n}{n}+O\bigl(\xi(\log T)^{3/2}\bigr).
\]
We now evaluate the main sum. Since $\mu(n)\ne0$ requires $n$ square-free, in $\sum_{kl=n}\Lambda(k)\mu(l)\,\mu(n)$ the factor $\Lambda(k)$ restricts $k$ to a single prime $p$ (a higher prime power would make $n=kl$ non-square-free), and then $p\nmid l$ and $\mu(pl)=-\mu(l)$, so
\[
\Lambda(p)\,\mu(l)\,\mu(pl)=-\mu(l)^2\log p .
\]
Carrying out the sum over $k$ first, and writing $a=\frac{\log(\xi/l)}{\log\xi}$ so that $P_l=P(a)$, only the primes $k=p$ contribute. The prime number theorem in the form
\[
\sum_{\substack{p\le y\\ p\nmid lq}}\frac{\log p}{p}=\log y+O(\log\log 3lq),
\]
where the cost of removing the primes dividing $lq$ is controlled by Lemma~\ref{lem:densities}, together with partial summation, then yields
\[
\sum_{\substack{p\le\xi/l\\ p\nmid lq}}\frac{\log p}{p}\,P_{pl}=\log\xi\int_0^a P(u)\,du+O(\log\log 3qT).
\]
Hence the main sum equals $-\sum_{l\le\xi,\,(l,q)=1}\frac{\mu(l)^2 P_l}{l}\Bigl(\log\xi\int_0^a P\Bigr)+O\bigl(\log\xi\log\log 3qT\bigr)$, and summing over $l$ by Lemma~\ref{lem:densities} once more,
\[
\sum_{\substack{n\le\xi\\(n,q)=1}}\frac{\widehat\alpha_n\,\mu(n)P_n}{n}
=-A_q(\log\xi)^2\int_0^1 P(a)\Bigl(\int_0^a P(u)\,du\Bigr)da+O\bigl(\log\xi\log\log 3qT\bigr).
\]
By the substitution $Q(a)=\int_0^a P$ the inner double integral satisfies
\[
\int_0^1 P(a)\Bigl(\int_0^a P(u)\,du\Bigr)da=\int_0^1 Q'(a)Q(a)\,da=\tfrac12 Q(1)^2=\tfrac12\Bigl(\int_0^1 P\Bigr)^2,
\]
so the main sum is $-\tfrac{A_q}{2}(\log\xi)^2\bigl(\int_0^1 P\bigr)^2+O(\log\xi\log\log 3qT)$. Substituting into the expression for $J_1$ and recalling $\log\xi=\vartheta\log T$,
\begin{equation}\label{eq:J1-MN}
2\Re J_1=\frac{T}{2\pi}\,A_q\log T\;\vartheta^2\log T\Bigl(\int_0^1 P\Bigr)^2+O\bigl(T\log T\log\log 3qT\bigr).
\end{equation}
Adding \eqref{eq:K-MN} and \eqref{eq:J1-MN} proves Proposition~\ref{prop:M2MN}.

\section{Proof of Propositions \ref{prop:M1Ng} and \ref{prop:M2Ng}}\label{sect:propsNg}

This section runs parallel to Section~\ref{sect:propsMN}, and the reader will recognise the overall structure at once. We nevertheless give the details in full: the changes, where they occur, are subtle, and one step, the left edge of $\widetilde M_1$, is genuinely different. Throughout, the mollifier is $\widetilde{\mathcal{L}}_\psi(s)=\sum_{n\le\xi}\lambda(n)\psi(n)P_n n^{-s}$, and the underlying Dirichlet series is $L(2s,\chi^2)/L(s,\chi)=\sum_n\lambda(n)\chi(n)n^{-s}$ from \eqref{eq:liouville-id}. The arithmetic factor changes from $A_q$ to $\varphi(q)/q$ because the diagonal coefficient is now $|\lambda(n)\chi(n)|^2=\mathbf 1_{(n,q)=1}$. It is also in this section that the restriction $q\le T^{1-\delta}$ of Theorem~\ref{thm:Ng} arises, at the horizontal edges of the first moment; we discuss it, and how GRH for $L(s,\chi^2)$ would remove it, in the remark following that estimate.

\subsection{The first moment $\widetilde M_1$}
Let $T\in\mathcal T$, let $t_0$ be the bottom height of Lemma~\ref{lem:heights}, and let $3\le q\le T^{1-\delta}$. As $L(2s,\chi^2)/L(s,\chi)$ has a simple pole at each zero $\rho$ of residue $L(2\rho,\chi^2)/L'(\rho,\chi)$, Cauchy's residue theorem gives, over the same rectangle $\mathcal R$,
\[
\widetilde M_1=\frac{1}{2\pi i}\oint_{\mathcal R}\frac{L(2s,\chi^2)}{L(s,\chi)}\,\widetilde{\mathcal{L}}_{\bar\chi}(1-s)\,ds=I_1+I_2+I_3+I_4 ,
\]
say. (When $\chi$ is real, $\chi^2$ is the principal character and $L(2s,\chi^2)=\zeta(2s)\prod_{p\mid q}(1-p^{-2s})$ has a pole at $s=\tfrac12$; this lies at height $t=0$, below $\mathcal R$, and so is not enclosed, exactly as for $\zeta(2s)$ in the case $q=1$. When $\chi$ is complex, $L(2s,\chi^2)$ is entire.)

We begin with the right edge $I_1$ (as in Section~\ref{sect:propsMN}, the $t$-integration is silently extended from $t_0$ down to $1$, an $O(\xi(qT)^{o(1)})=o(T)$ change). On $\Re s=c$ both factors are absolutely convergent, $L(2s,\chi^2)/L(s,\chi)=\sum_m\lambda(m)\chi(m)m^{-s}$ and $\widetilde{\mathcal{L}}_{\bar\chi}(1-s)=\sum_{n\le\xi}\lambda(n)\bar\chi(n)P_n n^{s-1}$, so with $s=c+it$,
\[
I_1=\frac{1}{2\pi}\int_1^T\Bigl(\sum_m\frac{\lambda(m)\chi(m)}{m^{c+it}}\Bigr)\Bigl(\sum_{n\le\xi}\frac{\lambda(n)\bar\chi(n)P_n}{n^{1-c-it}}\Bigr)\,dt.
\]
Applying Lemma~\ref{lem:MV} with $a_m=\lambda(m)\chi(m)m^{-c}$ and $b_n=\lambda(n)\bar\chi(n)P_n n^{c-1}$,
\[
I_1=\frac{T}{2\pi}\sum_{n\le\xi}\lambda(n)^2|\chi(n)|^2\frac{P_n}{n}+O\!\left(\Bigl(\sum_m m|a_m|^2\Bigr)^{1/2}\Bigl(\sum_{n\le\xi}n|b_n|^2\Bigr)^{1/2}\right).
\]
Since $\lambda(n)^2=1$, the main sum is $\sum_{n\le\xi,(n,q)=1}P_n/n$. For the error,
\[
\sum_m m|a_m|^2=\sum_m|\chi(m)|^2 m^{1-2c}\le\zeta(2c-1)\ll\log T
\]
and
\[
\sum_{n\le\xi}n|b_n|^2=\sum_{n\le\xi}|\chi(n)|^2P_n^2\,n^{2c-1}\ll\xi^{2c}\ll\xi^2,
\]
with absolute constants, so the error is $O\bigl(\xi(\log T)^{1/2}\bigr)$. By the first density of Lemma~\ref{lem:densities} and partial summation,
\[
\sum_{\substack{n\le\xi\\(n,q)=1}}\frac{P_n}{n}=\frac{\varphi(q)}{q}\log\xi\int_0^1 P+O(\log\log 3q),
\]
hence
\begin{align}
I_1&=\frac{T}{2\pi}\frac{\varphi(q)}{q}(\log\xi)\int_0^1 P+O\bigl(\xi(\log T)^{1/2}+T\log\log 3q\bigr)\notag\\
&=\frac{\varphi(q)}{q}\,\frac{T}{2\pi}\log T\;\vartheta\!\int_0^1 P+O\bigl(T\log\log 3q\bigr).\label{eq:I1-Ng}
\end{align}

Next we bound the horizontal edges. On $I_2$ we now have, in addition to the factor $|L(\sigma+iT,\chi)|^{-1}\ll(qT)^{o(1)}$ from the $-\tfrac12\le\sigma\le2$ clause of Lemma~\ref{lem:GRHsize}, the numerator $|L(2\sigma+2iT,\chi^2)|$. For $\sigma\ge\tfrac12$ this is $\ll(qT)^{o(1)}$ since $2\sigma\ge1$ (the imprimitive Euler factors contributing at most $2^{\omega(q)}$), while for $1-c\le\sigma<\tfrac12$ the convexity bound for the primitive character inducing $\chi^2$ (whose conductor divides $q$), together with the boundedness of the finitely many imprimitive Euler factors, gives $|L(2\sigma+2iT,\chi^2)|\ll(qT)^{1/2-\sigma+o(1)}$. The integrand is therefore $\ll(qT)^{o(1)}\bigl(\xi+(qT)^{1/2}\bigr)$, and
\[
I_2\ll \xi\,(qT)^{o(1)}+(qT)^{1/2+o(1)}=o(T)
\qquad\text{precisely when }q\le T^{1-\delta}.
\]
On the bottom edge, at the admissible height $t_0$, convexity gives $|L(2\sigma+2it_0,\chi^2)|\ll q^{1/2-\sigma+o(1)}$ for $\sigma<\tfrac12$ and $|L(\sigma+it_0,\chi)|^{-1}\ll q^{o(1)}$, so
\[
I_4\ll\bigl(\xi+q^{1/2}\bigr)(qT)^{o(1)}=o(T)
\]
for $q\le T^{1-\delta}$.

\begin{remark}
The estimate for $I_2$ is the only point in the paper at which the range of $q$ is restricted; every other estimate in this section is uniform for $q\le T^{A}$. The restriction enters because we know only the convexity bound for $L(\cdot,\chi^2)$ in the strip $0<\Re w<1$. If one assumes GRH for $L(s,\chi^2)$, the restriction can be removed. Keeping the left edge at $\Re s=1-c$ does not help: the worst horizontal contribution is near $\sigma=1-c$, where $2\sigma\approx0$ and even GRH gives $|L(2\sigma+2iT,\chi^2)|\asymp(qT)^{1/2}$, the same size as convexity. The efficient route is instead to move the left edge of the rectangle to $\Re s=\tfrac14$, so that $2s$ stays on or to the right of the critical line of $L(\cdot,\chi^2)$ and the Lindel\"of-type bound of Lemma~\ref{lem:GRHsize} applies to it. The horizontal edges are then $\ll\xi(qT)^{o(1)}$, the new left edge is $\ll q^{-1/4}T^{3/4}\xi^{1/4}(qT)^{o(1)}=o(T)$, and the range extends to $q\le T^{A}$ as in Theorem~\ref{thm:MN}. This is also the route taken by Sinha \cite{SinhaThesis} at $q=1$, and it renders the oscillation argument for the left edge below unnecessary. We prefer to keep Theorem~\ref{thm:Ng} unconditional in $\chi^2$, which forces the left edge to stay at $\Re s=1-c$ and is the reason for the functional-equation reflection that follows.
\end{remark}

It remains to bound the left edge $I_3$, and here the argument departs from Section~\ref{sect:propsMN}. On $\Re s=1-c$ we have $2\Re s=2-2c=-2/\log T<0$, so $2s$ lies just to the left of the critical strip of $L(\cdot,\chi^2)$, and by the functional equation $|L(2s,\chi^2)|\ll(q|t|)^{1/2}T^{o(1)}$, a bound of genuinely growing size. This growth offsets the decay $|L(s,\chi)|^{-1}\ll(q|t|)^{-1/2}\log T$ established in Section~\ref{sect:propsMN}: the product of the two bounds no longer decays in $t$, and the resulting trivial estimate $I_3\ll T(\log T)^3$ exceeds the main term. We therefore exploit the oscillation of the archimedean factor instead.

Let $\chi^*$ be the primitive character, of conductor $q^*\mid q$, that induces $\chi^2$, so that
\[
L(2s,\chi^2)=E(2s)\,L(2s,\chi^*),
\qquad
E(w)=\prod_{\substack{p\mid q\\ p\nmid q^*}}\bigl(1-\chi^*(p)p^{-w}\bigr),
\]
the factor $E$ being a finite Euler product. Applying the functional equations $L(2s,\chi^*)=X(2s,\chi^*)L(1-2s,\bar\chi^*)$ and $L(s,\chi)=X(s,\chi)L(1-s,\bar\chi)$, we may write, for $s=(1-c)+it$ with $1\le t\le T$,
\begin{equation}\label{eq:reflection}
\frac{L(2s,\chi^2)}{L(s,\chi)}=\underbrace{\frac{X(2s,\chi^*)}{X(s,\chi)}}_{G(t)}\;E(2s)\;\frac{L(1-2s,\bar\chi^*)}{L(1-s,\bar\chi)},
\end{equation}
where $G(t)$ denotes the quotient of the two archimedean factors. The point of \eqref{eq:reflection} is that $G(t)$ is a ratio of two primitive archimedean factors, while the remaining factors are evaluated in the region of absolute convergence: indeed $\Re(1-2s)=2c-1>1$ and $\Re(1-s)=c>1$, while $E(2s)=\sum_d c_d\,d^{-2s}$ is a Dirichlet polynomial supported on square-free $d\mid\prod_{p\mid q}p$ with $|c_d|\le1$. Writing $s=(1-c)+it$, these factors expand as
\[
\begin{gathered}
L(1-2s,\bar\chi^*)=\sum_{a\ge1}\frac{\overline{\chi^*}(a)}{a^{2c-1}}\,(a^2)^{it},\qquad
\frac{1}{L(1-s,\bar\chi)}=\sum_{b\ge1}\frac{\mu(b)\bar\chi(b)}{b^{c}}\,b^{it},\\
\widetilde{\mathcal{L}}_{\bar\chi}(1-s)=\sum_{m\le\xi}\frac{\lambda(m)\bar\chi(m)P_m}{m^{c}}\,m^{it},
\end{gathered}
\]
together with $E(2s)=\sum_d c_d\,d^{2(c-1)}\,(d^{2})^{-it}$, the last carrying negative frequencies. Their product is therefore an absolutely convergent sum of oscillations over the positive rational frequencies $r=a^2bm/d^2$,
\[
E(2s)\,\frac{L(1-2s,\bar\chi^*)}{L(1-s,\bar\chi)}\,\widetilde{\mathcal{L}}_{\bar\chi}(1-s)
=\sum_{\substack{a,d,b\ge1\\ m\le\xi}}\beta_{a,d,b,m}\,r^{it},
\]
with coefficients
\[
\beta_{a,d,b,m}=c_d\,d^{2(c-1)}\,\frac{\overline{\chi^*}(a)}{a^{2c-1}}\,\frac{\mu(b)\bar\chi(b)}{b^{c}}\,\frac{\lambda(m)\bar\chi(m)P_m}{m^{c}},
\]
so that
\[
I_3=-\frac{1}{2\pi}\sum_{a,d,b,m}\beta_{a,d,b,m}\,J_r,
\qquad
J_r=\int_1^T G(t)\,r^{it}\,dt .
\]

We bound each $J_r$ uniformly in $r$ and in $q$ by Lemma~\ref{lem:vdc}. By Lemma~\ref{lem:Xfactor}, the modulus
\[
|G(t)|=\frac{|X(2-2c+2it,\chi^*)|}{|X(1-c+it,\chi)|}\asymp\Bigl(\frac{q^*}{q}\Bigr)^{1/2}\bigl(qq^*t\bigr)^{O(1/\log T)}\ll e^{O(A)}\ll1
\]
for $q\le T^{A}$ and $1\le t\le T$, with monotone modulus up to $O(1/t)$, so $\sup|G|+\operatorname{Var}G\ll1$ with constants depending only on $A$. For the phase, write $\psi_r(t)=\arg G(t)+t\log r$; the term $t\log r$ is linear in $t$ whatever the (rational) frequency $r>0$, so Lemma~\ref{lem:vdc} applies just as for integer frequencies. By the differentiated Stirling asymptotic of Lemma~\ref{lem:Xfactor}, in which the conductors of $\chi$ and $\chi^*$ enter only the $t$-linear part of the phase and hence shift $\psi_r'$ but not $\psi_r''$,
\[
\psi_r'(t)=-\log t+\log r+c_{q,q^*}+O\!\left(\frac1t\right),
\qquad
\psi_r''(t)=-\frac1t+O\!\left(\frac1{t^2}\right),
\]
where $c_{q,q^*}=\log\frac{q}{2\pi}-2\log\frac{q^*}{\pi}\ll\log q$ is constant in $t$ (it shifts the stationary point but not the curvature). The second derivative is independent of $r$ and of $q$ and, for $t\ge t_1$ with $t_1$ an absolute constant, is of constant sign with $|\psi_r''(t)|\ge T^{-1}(1+o(1))$ on $[t_1,T]$; the initial segment $[1,t_1]$ contributes $O(1)$ to $J_r$ trivially. Lemma~\ref{lem:vdc} therefore gives, uniformly in $r$ and $q$,
\[
|J_r|\ll\Bigl(\min_{[t_1,T]}|\psi_r''|\Bigr)^{-1/2}\ll\sqrt T .
\]
It remains to bound the coefficient sum. The imprimitivity polynomial contributes
\[
\sum_d|c_d|\,d^{2(c-1)}\le2^{\omega(q)}q^{2/\log T}=T^{o(1)}
\qquad(q\le T^{A}),
\]
and hence
\[
\sum_{a,d,b,m}|\beta_{a,d,b,m}|
\ll T^{o(1)}\Bigl(\sum_a \frac{1}{a^{2c-1}}\Bigr)\Bigl(\sum_b\frac{\mu(b)^2}{b^{c}}\Bigr)\Bigl(\max_{[0,1]}|P|\sum_{m\le\xi}\frac{1}{m^{c}}\Bigr)
\le T^{o(1)}\,\zeta(2c-1)\,\frac{\zeta(c)^2}{\zeta(2c)}\max_{[0,1]}|P| ,
\]
which is $\ll T^{o(1)}(\log T)^3$ since $\zeta(2c-1),\zeta(c)\ll\log T$ and $\zeta(2c)\asymp1$. Combining the last three displays,
\begin{equation}\label{eq:I3-Ng}
I_3\ll\Bigl(\sum_{a,d,b,m}|\beta_{a,d,b,m}|\Bigr)\,\sup_{r}|J_r|\ll T^{1/2+o(1)}=o(T).
\end{equation}
No hypothesis on $L(s,\chi^2)$ has been used: the reflected $L$-factors in \eqref{eq:reflection} are evaluated only in $\Re>1$, and $E$ is a finite Euler product. Combining \eqref{eq:I1-Ng}, the horizontal bounds, and \eqref{eq:I3-Ng} proves Proposition~\ref{prop:M1Ng}.

\subsection{The second moment $\widetilde M_2$}
By the symmetry \eqref{eq:symm}, $\widetilde M_2=\sum_{t_0<\gamma\le T}\widetilde{\mathcal L}_\chi(\rho)\widetilde{\mathcal L}_{\bar\chi}(1-\rho)$, and since the residue of $\frac{L'}{L}(s,\chi)$ at a simple zero is $1$,
\[
\widetilde M_2=\frac{1}{2\pi i}\oint_{\mathcal R}\frac{L'}{L}(s,\chi)\,\widetilde{\mathcal L}_\chi(s)\,\widetilde{\mathcal L}_{\bar\chi}(1-s)\,ds=\widetilde J_1+\widetilde J_2+\widetilde J_3+\widetilde J_4,
\]
say. Taking $T$ in the sequence of Lemma~\ref{lem:heights}, the trivial bound $\widetilde{\mathcal L}_\psi(\sigma+it)\ll\xi^{1-\sigma}$ together with $\frac{L'}{L}(\sigma+iT,\chi)\ll(\log qT)^2$ gives $\widetilde J_2\ll\xi(\log qT)^2$ and, at the bottom height $t_0$, $\widetilde J_4\ll\xi(\log 3q)^2$, both $o(T)$ uniformly for $q\le T^{A}$. Relating $\widetilde J_3$ to $\widetilde J_1$ through the logarithmic-derivative functional equation of Lemma~\ref{lem:Xfactor}, with the change of variables $s\mapsto1-s$ on the left edge (whose downward orientation reverses the sign of the $\log$-term) and taking complex conjugates,
\[
\widetilde M_2=\overline{\widetilde K}+2\Re\widetilde J_1+O\bigl(\xi(\log qT)^2\bigr),
\]
where
\[
\widetilde K=\frac{1}{2\pi}\int_1^T\log\frac{qt}{2\pi}\,\widetilde{\mathcal L}_\chi(c+it)\,\widetilde{\mathcal L}_{\bar\chi}(1-c-it)\,dt
\]
and where $\widetilde J_1$ is the right-edge integral
\[
\widetilde J_1=\frac{1}{2\pi}\int_1^T\frac{L'}{L}(c+it,\chi)\,\widetilde{\mathcal L}_\chi(c+it)\,\widetilde{\mathcal L}_{\bar\chi}(1-c-it)\,dt .
\]

We first evaluate $\widetilde K$ (real to leading order, so $\overline{\widetilde K}$ and $\widetilde K$ have the same main term). Set $\widetilde A(t)=\int_1^t\widetilde{\mathcal L}_\chi(c+iu)\widetilde{\mathcal L}_{\bar\chi}(1-c-iu)\,du$. The two factors have coefficient exponents summing to $1$, and the diagonal coefficient is $\lambda(n)^2|\chi(n)|^2=\mathbf 1_{(n,q)=1}$, so by Lemma~\ref{lem:MV} and Lemma~\ref{lem:densities},
\[
\begin{aligned}
\widetilde A(t)&=t\sum_{\substack{n\le\xi\\(n,q)=1}}\frac{P_n^2}{n}+O\bigl(\xi(\log T)^{1/2}\bigr)\\
&=\frac{\varphi(q)}{q}\,t\log\xi\int_0^1 P^2+O\bigl(t\log\log 3q+\xi(\log T)^{1/2}\bigr).
\end{aligned}
\]
Integrating by parts,
\[
\widetilde K=\frac{1}{2\pi}\Bigl[\log\tfrac{qt}{2\pi}\,\widetilde A(t)\Bigr]_1^T-\frac{1}{2\pi}\int_1^T\frac{\widetilde A(t)}{t}\,dt,
\]
and since $\int_1^T\frac{\widetilde A(t)}{t}dt=\tfrac{\varphi(q)}{q}T\log\xi\int_0^1 P^2+O(T\log\log 3q)$ is smaller than the first term on the right-hand side by a full factor $\log qT$,
\begin{align}
\widetilde K&=\frac{\varphi(q)}{q}\,\frac{T(\log\xi)(\log qT)}{2\pi}\int_0^1 P^2+O\bigl(T\log T+T\log qT\log\log 3q\bigr)\notag\\
&=\frac{\varphi(q)}{q}\,\frac{T}{2\pi}\log qT\;\vartheta\log T\!\int_0^1 P^2+O\bigl(T\log T\log\log 3qT\bigr).\label{eq:K-Ng}
\end{align}

We now evaluate $\widetilde J_1$. On $\Re s=c$ we expand
\[
-\frac{L'}{L}(s,\chi)\,\widetilde{\mathcal L}_\chi(s)=\Bigl(\sum_k\Lambda(k)\chi(k)k^{-s}\Bigr)\Bigl(\sum_{l\le\xi}\lambda(l)\chi(l)P_l\,l^{-s}\Bigr)=\sum_n\widetilde\alpha_n\chi(n)\,n^{-s},
\]
where $\widetilde\alpha_n=\sum_{kl=n,\,l\le\xi}\Lambda(k)\lambda(l)P_l$ and we have used $\chi(k)\chi(l)=\chi(n)$. Pairing this against $\widetilde{\mathcal L}_{\bar\chi}(1-s)=\sum_n\lambda(n)\bar\chi(n)P_n n^{s-1}$ and applying Lemma~\ref{lem:MV}, the diagonal pairing $\chi(n)\overline{\chi(n)}=\mathbf 1_{(n,q)=1}$ forces $(n,q)=1$ and leaves
\[
\widetilde J_1=-\frac{T}{2\pi}\sum_{\substack{n\le\xi\\(n,q)=1}}\frac{\widetilde\alpha_n\,\lambda(n)P_n}{n}+O\bigl(\xi(\log T)^{3/2}\bigr).
\]
To evaluate the main sum we substitute $\widetilde\alpha_n$ and use the complete multiplicativity of the Liouville function, $\lambda(l)\lambda(kl)=\lambda(l)^2\lambda(k)=\lambda(k)$, to obtain
\[
\begin{aligned}
\sum_{\substack{n\le\xi\\(n,q)=1}}\frac{\widetilde\alpha_n\,\lambda(n)P_n}{n}
&=\sum_{\substack{kl\le\xi\\(kl,q)=1}}\frac{\Lambda(k)\lambda(l)P_l\,\lambda(kl)P_{kl}}{kl}\\
&=\sum_{\substack{kl\le\xi\\(kl,q)=1}}\frac{\Lambda(k)\lambda(k)\,P_l P_{kl}}{kl}.
\end{aligned}
\]
The factor $\Lambda(k)\lambda(k)$ is supported on prime powers, with $\Lambda(p)\lambda(p)=-\log p$ on primes (and $\Lambda(p^j)\lambda(p^j)=(-1)^j\log p$ for $j\ge2$ contributing $O(\log\xi)$ in total, absorbed into the error term), and vanishes for $p\mid q$. Writing $a=\frac{\log(\xi/l)}{\log\xi}$ and carrying out the sum over $k$ first, the prime number theorem in the form
\[
\sum_{\substack{p\le y\\ p\nmid q}}\frac{\log p}{p}=\log y+O(\log\log 3q)
\]
and partial summation give
\[
\sum_{\substack{k\le\xi/l\\(k,q)=1}}\frac{\Lambda(k)\lambda(k)}{k}\,P_{kl}=-\log\xi\int_0^a P(u)\,du+O(\log\log 3q).
\]
Summing over $l\le\xi$ with $(l,q)=1$ by Lemma~\ref{lem:densities}, and writing $Q(a)=\int_0^a P$,
\[
\begin{aligned}
\sum_{\substack{n\le\xi\\(n,q)=1}}\frac{\widetilde\alpha_n\,\lambda(n)P_n}{n}
&=-\frac{\varphi(q)}{q}(\log\xi)^2\int_0^1 P(a)Q(a)\,da+O\bigl(\log\xi\log\log 3qT\bigr)\\
&=-\frac{\varphi(q)}{2q}(\log\xi)^2\Bigl(\int_0^1 P\Bigr)^2+O\bigl(\log\xi\log\log 3qT\bigr),
\end{aligned}
\]
since
\[
\int_0^1 P(a)Q(a)\,da=\int_0^1 Q'(a)Q(a)\,da=\tfrac12 Q(1)^2=\tfrac12\Bigl(\int_0^1 P\Bigr)^2.
\]
Substituting into the expression for $\widetilde J_1$ and recalling $\log\xi=\vartheta\log T$,
\begin{equation}\label{eq:J1-Ng}
2\Re\widetilde J_1=\frac{\varphi(q)}{q}\,\frac{T}{2\pi}\log T\;\vartheta^2\log T\Bigl(\int_0^1 P\Bigr)^2+O\bigl(T\log T\log\log 3qT\bigr).
\end{equation}
Adding \eqref{eq:K-Ng} and \eqref{eq:J1-Ng} proves Proposition~\ref{prop:M2Ng}.

\section{Optimisation of the mollifier, the conductor deficit, and the conjectures}\label{sect:opt}

\subsection{Proof of Theorems \ref{thm:MN} and \ref{thm:Ng}}
We treat both theorems at once; write $\mathfrak a=\mathfrak a(q)$ for the arithmetic factor \eqref{eq:aq}, and let $(M_1,M_2)$ stand for the relevant pair from \eqref{eq:CS-MN} or \eqref{eq:CS-Ng}, with $q$ in the relevant range.

Fix $T\ge4$ and choose a height $\tau\in\mathcal T$ with $T-2\le\tau<T$, possible since the gaps in $\mathcal T$ are bounded. As the summand in \eqref{eq:CS-MN} (respectively \eqref{eq:CS-Ng}) is non-negative, the moment up to $T$ dominates the moment up to $\tau$, so it suffices to prove the bound at $\tau$. By Propositions~\ref{prop:M1MN} and~\ref{prop:M2MN} (respectively~\ref{prop:M1Ng} and~\ref{prop:M2Ng}), and since $\mathfrak a(q)\gg1/\log\log q$ makes every error term smaller than its main term by the uniform relative factor $O\bigl((\log\log qT)^2/\log T\bigr)$,
\[
\frac{|M_1|^2}{M_2}
=\frac{\Bigl(\frac{T}{2\pi}\mathfrak a\log T\;\vartheta\!\int_0^1 P\Bigr)^2}{\frac{T}{2\pi}\mathfrak a\log T\Bigl[\log qT\;\vartheta\!\int_0^1 P^2+\log T\;\vartheta^2\bigl(\int_0^1 P\bigr)^2\Bigr]}\;\bigl(1+o(1)\bigr),
\]
with the $o(1)$ uniform in the stated ranges of $q$. Cancelling the common factors,
\begin{equation}\label{eq:master}
\frac{|M_1|^2}{M_2}=\frac{\mathfrak a(q)}{2\pi}\,T\,\frac{ur}{1+ur}\,\bigl(1+o(1)\bigr),
\qquad
u=\frac{\log\xi}{\log qT}=\vartheta\beta,
\quad
r=\frac{\bigl(\int_0^1 P\bigr)^2}{\int_0^1 P^2},
\end{equation}
which is increasing in both $u$ and $r$. Equation \eqref{eq:master} is the key formula of the paper: every statement in Section~\ref{sect:intro} is a reading of it.

The two variables are constrained as follows. By the Cauchy--Schwarz inequality $\bigl(\int_0^1 P\bigr)^2\le\int_0^11^2\,\int_0^1P^2$, so $r\le1$. The constraints $P(0)=0$ and $P(1)=1$ exclude the constant polynomial, so $r=1$ is a supremum rather than a maximum; it is approached by the admissible choice $P(x)=1-(1-x)^N$ as $N\to\infty$, for which $\int_0^1P=\tfrac{N}{N+1}$ and $\int_0^1P^2=\tfrac{2N^2}{(N+1)(2N+1)}$, so that $r\to1$.

As for $u$, we have $u\le\beta$: the mean-value error terms of Lemma~\ref{lem:MV} cap the mollifier at $\xi\le T^{1-\varepsilon}$ independently of $q$ (the remark following the lemma), that is, $\vartheta<1$.

Hence, given $\varepsilon>0$, taking $N$ large and then $\vartheta$ close enough to $1$,
\[
\sum_{0<\gamma\le T}|R(\rho)|^2\ge\frac{\mathfrak a(q)}{2\pi}\,\frac{ur}{1+ur}\,T\,\bigl(1+o(1)\bigr)\ge(1-\varepsilon)\,\frac{\mathfrak a(q)}{2\pi}\,\frac{\beta}{1+\beta}\,T
\]
for all sufficiently large $T$, uniformly in the stated ranges of $q$, where $R(\rho)$ is the relevant summand and we used that $v\mapsto v/(1+v)$ is increasing with $\tfrac{\vartheta\beta r}{1+\vartheta\beta r}\to\tfrac{\beta}{1+\beta}$ as $\vartheta,r\to1$, uniformly in $\beta\in(0,1)$.

With $\mathfrak a=A_q$ this is Theorem~\ref{thm:MN}, and with $\mathfrak a=\varphi(q)/q$ it is Theorem~\ref{thm:Ng}. the small-conductor estimates follow since $\log q=o(\log T)$ gives $\beta\to1$ and $\tfrac{\beta}{1+\beta}\to\tfrac12$, and the power-conductor estimate follows since $q=T^{A}$ gives $\beta=\tfrac1{1+A}$ and $\tfrac{\beta}{1+\beta}=\tfrac{1}{2+A}$. \qed

\subsection{The conductor deficit and the conjectures}\label{subsect:deficit}
The key formula \eqref{eq:master} tells the whole story at once. The method produces the quantity $\tfrac{\mathfrak a(q)}{2\pi}\tfrac{u}{1+u}T$ at $r\to1$; the mean value theorem caps $u$ at $\beta=\log T/\log qT$; and the cap is conductor-independent because the $t$-integration window $[1,T]$, not the conductor, sets the resolution of the mean value theorem: a single $L$-function observed up to height $T$ cannot support a mollifier longer than $T$, while the zero density it must match grows with $\log qT$. For small conductors $\beta\to1$ and the proved value is half the limiting one; for $q=T^A$ the proportion drops to $\tfrac1{2+A}$.

In the opposite direction, Farmer's long-mollifier heuristic \cite{Farmer} corresponds to letting $u\to\infty$ in \eqref{eq:master}: the formal limit is $\tfrac{\mathfrak a(q)}{2\pi}T$, and this is the content of Conjectures~\ref{conj:dirMN} and~\ref{conj:dirNg}. In particular our theorems are, conjecturally, exactly the proportion $\tfrac{\beta}{1+\beta}$ of the truth, one half in the small-conductor regime, the deficiency being the ratio of the value of $u\mapsto\tfrac{u}{1+u}$ at the cap $u=\beta$ to its limiting value $1$, a structural feature of the diagonal Cauchy--Schwarz construction rather than a numerical accident.

This heuristic is, of course, not a proof, and it is applied here to a discrete sum over zeros rather than to the continuous mean values for which it was framed. Its main external support is the case $q=1$, where the limiting values $\tfrac{3}{\pi^3}T$ and $\tfrac{T}{2\pi}$ coincide with the independently-formulated Conjectures~\ref{conj:gonek} and~\ref{conj:ng} of Gonek and Ng. As noted in Section~\ref{sect:intro}, it would be of interest to see whether Conjectures~\ref{conj:dirMN} and~\ref{conj:dirNg} can be derived from the recipe of Conrey--Farmer--Keating--Rubinstein--Snaith \cite{CFKRS} or from the $L$-functions Ratios Conjecture \cite{CFZ,ConreySnaith}.

\section{Proof of Corollaries in Section \ref{sec:higher}}\label{sec:higher-proofs}

The four corollaries we prove here are elementary consequences of the second-moment lower bounds of Theorems~\ref{thm:MN} and~\ref{thm:Ng}: each follows from a single application of H\"older's inequality, fed only by the Riemann--von Mangoldt count of the zeros of $L(s,\chi)$, and uses no further information about the zeros or the $L$-function. We first prove the single-character bounds, Corollaries~\ref{cor:MN-higher} and~\ref{cor:Ng-higher}; the family forms then follow by summing over the primitive characters modulo $q$.

\begin{proof}[Proof of Corollaries~\textup{\ref{cor:MN-higher}} and \textup{\ref{cor:Ng-higher}}]
Write $R(\rho)$ for the summand in each corollary and $\mathfrak a(q)$ for the arithmetic factor, so $\mathfrak a(q)=A_q$ in the reciprocal case and $\mathfrak a(q)=\varphi(q)/q$ in the ratio case. For $k=1$ the statements are Theorems~\ref{thm:MN} and~\ref{thm:Ng}, so assume $k>1$. By H\"older's inequality with exponents $k$ and $k/(k-1)$,
\[
  \sum_{0<\gamma\le T}|R(\rho)|^{2}
  \;\le\;\Bigl(\sum_{0<\gamma\le T}|R(\rho)|^{2k}\Bigr)^{1/k}N(T,\chi)^{1-1/k},
\]
where $N(T,\chi)=\frac{T}{2\pi}\log\frac{qT}{2\pi}-\frac{T}{2\pi}+O(\log qT)$ is
the zero-counting function for $L(s,\chi)$ (see Davenport \cite[Ch.~16]{Davenport}), uniformly for $q\le T^{A}$ this is $\frac{T}{2\pi}\log qT\,(1+o(1))$. Rearranging and inserting the relevant second-moment bound (Theorem~\ref{thm:MN}, resp.~\ref{thm:Ng}) for
the numerator,
\[
  \sum_{0<\gamma\le T}|R(\rho)|^{2k}
  \;\ge\;\frac{\bigl(\sum_{0<\gamma\le T}|R(\rho)|^{2}\bigr)^{k}}{N(T,\chi)^{\,k-1}}
  \;\ge\;\frac{\bigl((1-\varepsilon)\tfrac{\mathfrak a(q)}{2\pi}\tfrac{\beta}{1+\beta}T\bigr)^{k}}
            {\bigl(\tfrac{T}{2\pi}\log qT\bigr)^{k-1}}\,(1+o(1)).
\]
Since $\bigl(\tfrac{\mathfrak a(q)}{2\pi}\tfrac{\beta}{1+\beta}\bigr)^{k}\big/
\bigl(\tfrac1{2\pi}\bigr)^{k-1}
=\tfrac{\mathfrak a(q)^{k}}{2\pi}\bigl(\tfrac{\beta}{1+\beta}\bigr)^{k}$, this gives the first display of each corollary after absorbing $(1-\varepsilon)^{k}(1+o(1))$ into a single $1-\varepsilon$. The explicit form follows from $\tfrac{\beta}{1+\beta}=\bigl(2+\tfrac{\log q}{\log T}\bigr)^{-1}$. When $\log q=o(\log T)$ one has $\tfrac{\beta}{1+\beta}\to\tfrac12$ and $\log qT\sim\log T$, yielding the small-conductor displays.
\end{proof}

At $q=1$ these recover the H\"older bounds for the zeta-function:
$A_1=6/\pi^2$ gives, at $k=1$, the constant $A_1/4\pi=3/2\pi^3$ for the reciprocal moment, while $\varphi(1)/1=1$ gives $\tfrac{1}{2^{k+1}\pi}\,T/(\log T)^{k-1}$ for the ratio
moment.

The family forms require no further work: the bounds just proved are uniform in $\chi$ and depend on it only through the modulus $q$, so they may be summed over the $\varphi^{*}(q)$ primitive characters.

\section{Further directions: family averages}\label{sect:family}

We conclude by recording what the preceding calculation suggests about averages over the family of primitive characters modulo $q$. Conjectures~\ref{conj:famA} and~\ref{conj:famB} should be viewed as the uniform family versions of Conjectures~\ref{conj:dirMN} and~\ref{conj:dirNg}. Indeed, for fixed $q$ they follow formally by averaging the individual conjectures over the $\varphi^*(q)$ primitive characters modulo $q$, since the constants $A_q/2\pi$ and $(\varphi(q)/q)/2\pi$ depend on the modulus but not on the particular character. When $q$ is allowed to grow with $T$, the same statements require the corresponding uniformity in the character aspect.

The theorems of this paper already give uniform family lower bounds by summing over primitive characters. Thus, at $k=1$, the family lower bounds capture the proportion
\[
\frac{\beta}{1+\beta},
\qquad
\beta=\frac{\log T}{\log qT},
\]
of the conjectural family main terms, exactly the proportion of the individual estimates. The reason is that the proof treats each $L(s,\chi)$ separately: the mollifier length is restricted by the height interval $[1,T]$, and hence by $\xi\le T^{1-\varepsilon}$, independently of the conductor.

A natural next question is whether a genuinely family-averaged argument can lessen this conductor loss. One would apply the Cauchy--Schwarz inequality once over all pairs $(\chi,\rho)$, with a common mollifier length and polynomial but the usual $\chi$-twisted mollifier for each character. The resulting family second moment would involve sums over primitive characters, and hence the orthogonality relation
\[
\sideset{}{^*}\sum_{\chi\bmod q}\chi(m)\bar\chi(n)=\sum_{d\mid(q,\,m-n)}\mu(q/d)\,\varphi(d)\qquad((mn,q)=1).
\]
Its off-diagonal terms are therefore organised by the congruence conditions $m\equiv n\pmod d$ with $d\mid q$, rather than by equality alone. This extra averaging may allow one to replace the single-character length restriction $\xi\le T^{1-\varepsilon}$ by a family-length restriction of size roughly $\xi\le(qT)^{1-\varepsilon}$.

In the notation of \eqref{eq:master}, such an improvement would let
\[
u=\frac{\log \xi}{\log qT}
\]
approach $1$, rather than only $\beta=\log T/\log qT$, and the Cauchy--Schwarz factor would then satisfy
\[
\frac{u}{1+u}\longrightarrow \tfrac12 .
\]
Thus a direct family average of this type could plausibly lessen the deficit in the conductor aspect, for instance improving the captured proportion from $1/(2+A)$ to $\tfrac12$ when $q=T^{A}$. It would not, however, remove the deficit within the present mollifier framework: the conjectures correspond to the formal long-mollifier limit $u\to\infty$, not merely to $u\to1$.

Making this rigorous would require conductor-uniform analogues of the explicit formula of Landau \cite{Landau} and Gonek \cite{GonekLandau,GonekMean} (see also \cite{DHPC}) for the zero-sums
\[
\sum_{0<\gamma_\chi\le T}\left(\frac{m}{n}\right)^{i\gamma_\chi},
\]
averaged over primitive characters modulo $q$, uniformly in both the conductor and the frequencies $m/n$. Such formulae must control the prime-power contributions and the primitive-character congruences simultaneously, which lies beyond the single-character estimates used here; we do not pursue it.

Finally, the individual $L(s,\chi)$ considered here and the family of primitive characters modulo $q$ are both of unitary type in the sense of Katz and Sarnak \cite{KatzSarnak}. It would be interesting to investigate the corresponding discrete moments in families of other symmetry types, for example the symplectic family of real characters or orthogonal families of modular-form $L$-functions. The results of the present paper apply to each individual member of such a family when the relevant hypotheses are assumed; the genuinely new question is whether averaging over the family permits longer mollifiers, and hence stronger lower bounds, than are available one $L$-function at a time.

\section*{Acknowledgments}
The author gratefully acknowledges support from the Heilbronn Institute for Mathematical Research. Thanks also to Ben Durkan for some helpful comments and references.


\begin{thebibliography}{99}
\bibitem{BFM} H. M. Bui, A. Florea, M. B. Milinovich, \emph{Negative discrete moments of the derivative of the Riemann zeta-function}, Bull. London Math. Soc. \textbf{56} (2024), no.~8, 2680--2703.
\bibitem{CFKRS} J. B. Conrey, D. W. Farmer, J. P. Keating, M. O. Rubinstein, N. C. Snaith, \emph{Integral moments of $L$-functions}, Proc. London Math. Soc. (3) \textbf{91} (2005), 33--104.
\bibitem{CFZ} J. B. Conrey, D. W. Farmer, M. R. Zirnbauer, \emph{Autocorrelation of ratios of $L$-functions}, Comm. Number Theory Phys. \textbf{2} (2008), no.~3, 593--636.
\bibitem{ConreySnaith} J. B. Conrey, N. C. Snaith, \emph{Applications of the $L$-functions ratios conjectures}, Proc. London Math. Soc. (3) \textbf{94} (2007), no.~3, 594--646.
\bibitem{Davenport} H. Davenport, \emph{Multiplicative Number Theory}, 3rd ed., revised by H. L. Montgomery, Graduate Texts in Mathematics \textbf{74}, Springer-Verlag, New York, 2000.
\bibitem{DHPC} B. Durkan, C. P. Hughes, A. Pearce-Crump, \emph{Generalisations of the Landau--Gonek theorem and applications to mean values of zeta}, arXiv:2601.18025 (2026).
\bibitem{CPNT} A. Hamieh, H. Kadiri, G. Martin, N. Ng, \emph{Comparative Prime Number Theory Problem List}, arXiv:2407.03530 (2024).
\bibitem{Farmer} D. W. Farmer, \emph{Long mollifiers of the Riemann zeta-function}, Mathematika \textbf{40} (1993), no.~1, 71--87.
\bibitem{GaoZhao} P. Gao, L. Zhao, \emph{Lower bounds for negative moments of $\zeta'(\rho)$}, Mathematika \textbf{69} (2023), no.~4, 1081--1103.
\bibitem{Gonek} S. M. Gonek, \emph{The second moment of the reciprocal of the Riemann zeta-function and its derivative}, talk at the Mathematical Sciences Research Institute, Berkeley, June 1999.
\bibitem{GonekLandau} S. M. Gonek, \emph{An explicit formula of Landau and its applications to the theory of the zeta-function}, Contemp. Math. \textbf{143} (1993), 395--413.
\bibitem{GonekMean} S. M. Gonek, \emph{Mean values of the Riemann zeta function and its derivatives}, Invent. Math. \textbf{75} (1984), 123--141.
\bibitem{HLZ} W. Heap, J. Li, J. Zhao, \emph{Lower bounds for discrete negative moments of the Riemann zeta function}, Algebra Number Theory \textbf{16} (2022), 1589--1625.
\bibitem{IK} H. Iwaniec, E. Kowalski, \emph{Analytic Number Theory}, American Mathematical Society Colloquium Publications \textbf{53}, American Mathematical Society, Providence, RI, 2004.
\bibitem{KatzSarnak} N. M. Katz, P. Sarnak, \emph{Zeroes of zeta functions and symmetry}, Bull. Amer. Math. Soc. (N.S.) \textbf{36} (1999), no.~1, 1--26.
\bibitem{Landau} E. Landau, \emph{\"Uber die Nullstellen der Zetafunktion}, Math. Ann. \textbf{71} (1912), 548--564.
\bibitem{MilinovichNg} M. B. Milinovich, N. Ng, \emph{A note on a conjecture of Gonek}, Funct. Approx. Comment. Math. \textbf{46.2} (2012), 177--187.
\bibitem{MV} H. L. Montgomery, R. C. Vaughan, \emph{Hilbert's inequality}, J. London Math. Soc. (2) \textbf{8} (1974), 73--82.
\bibitem{NgThesis} N. Ng, \emph{Limiting distributions and zeros of Artin $L$-functions}, PhD thesis, University of British Columbia, 2000.
\bibitem{NgPNET} N. Ng, \emph{Prime number error terms}, preprint, arXiv:2505.11295 (2025).
\bibitem{PCNg} A. Pearce-Crump, \emph{A note on a conjecture of Ng}, arXiv:2606.12376 (2026).
\bibitem{RudnickSound} Z. Rudnick, K. Soundararajan, \emph{Lower bounds for moments of $L$-functions}, Proc. Natl. Acad. Sci. USA \textbf{102} (2005), 6837--6838.
\bibitem{SinhaThesis} S. Sinha, \emph{Some Conditional Results involving Arithmetic Functions}, PhD thesis, University of Missouri, 2025.
\bibitem{Titchmarsh} E. C. Titchmarsh, \emph{The Theory of the Riemann Zeta-function}, 2nd ed., revised by D. R. Heath-Brown, Oxford University Press, 1986.
\bibitem{Tsang} K. M. Tsang, \emph{Some $\Omega$-theorems for the Riemann zeta-function}, Acta Arith. \textbf{46} (1986), no.~4, 369--395.
\end{thebibliography}
\end{document}